\setlist{noitemsep}
\newcommand{\F}{\mathcal{F}}
\newcommand{\G}{\mathcal{G}}
\newcommand{\mc}{\mathcal}
\newcommand{\OO}{\mathcal{O}}
\newcommand{\op}{\tn{op}}
\renewcommand{\P}{\mathbb{P}}
\newcommand{\ra}{\rightarrow}
\newcommand{\tn}{\text}
\newcommand{\Z}{\mathbb{Z}}
\DeclareMathOperator{\can}{can}
\DeclareMathOperator{\ev}{ev}
\DeclareMathOperator{\GGW}{\mathbb{G}W}
\DeclareMathOperator{\GW}{GW}
\DeclareMathOperator{\hofib}{hofib}
\DeclareMathOperator{\id}{id}
\DeclareMathOperator{\K}{K}
\DeclareMathOperator{\Perf}{Perf}
\DeclareMathOperator{\pr}{pr}
\DeclareMathOperator{\sheafhom}{\mathscr{H}\textit{\kern -4pt om}\,}
\DeclareMathOperator{\Spec}{Spec}
\DeclareMathOperator{\sPerf}{sPerf}
\DeclareMathOperator{\Vect}{Vect}
\DeclareMathOperator{\W}{W}
\numberwithin{equation}{subsection}
\theoremstyle{definition}
\newtheorem{definition}{Definition}[subsection]
\newtheorem{remark}[definition]{Remark}
\theoremstyle{plain}
\newtheorem{corollary}[definition]{Corollary}
\newtheorem{lemma}[definition]{Lemma}
\newtheorem{proposition}[definition]{Proposition}
\newtheorem{theorem}[definition]{Theorem}
\newtheorem{maintheorem}{Theorem}
\title{\textsc{the projective bundle formula for grothendieck-witt spectra}}
\author{
    \textsc{herman rohrbach}
    \thanks{
        The author is supported by the research training group \emph{GRK 2240: Algebro-Geometric Methods in Algebra, Arithmetic and Topology}.
    }
}
\date{}
\begin{document}

\maketitle

\begin{abstract}
    Grothendieck-Witt spectra represent higher Grothendieck-Witt groups and higher Hermitian K-theory in particular.
    A description of the Grothendieck-Witt spectrum of a finite dimensional projective bundle $\P(\mc{E})$ over a base scheme $X$ is given in terms of the Grothendieck-Witt spectra of the base, using the dg category of strictly perfect complexes, provided that $X$ is a scheme over $\Spec \Z[1/2]$ and satisfies the resolution property, e.g. if $X$ has an ample family of line bundles.
\end{abstract}

\section{Introduction}
    \label{section:introduction}

\emph{Projective bundles} are of central importance in algebraic geometry and its applications, and any computation of an invariant of projective bundles is bound to be useful for further computations of that same invariant, be it by using the result for projective bundles directly or by employing techniques similar to the ones found in the proof of the projective bundle formula.

The invariant at hand is the \emph{Grothendieck-Witt spectrum}, the stable homotopy groups of which are called \emph{higher Grothendieck-Witt groups}.
A related invariant is the \emph{Witt spectrum}, the stable homotopy groups of which coincide with the negative Grothendieck-Witt groups.
In algebraic geometry, algebraic K-theory is defined by considering vector bundles on a scheme $X$ and constructing an associated K-theory spectrum $\K(X)$. 
Similarly, Grothendieck-Witt theory, also known as Hermitian K-theory, is defined using vector bundles on $X$ which are additionally equipped with a \emph{symmetric form}.
Grothendieck-Witt theory subsumes algebraic K-theory, but is also harder to compute.

When the base scheme $X$ is a point $\Spec k$ for some field $k$, then $\K(X)$ is the algebraic K-theory of $k$-vector spaces, while the Grothendieck-Witt spectrum $\GW(X)$ of $X$ is the Grothendieck-Witt theory of $k$-vector spaces equipped with a quadratic form. 
The latter theory was founded by Ernst Witt in the 1930s, and both theories remain topics of interest to this day.

As for projective bundle formulas, it is known that orientable cohomology theories $E$, on the one hand, satisfy
\begin{equation*}
    E(\P(\mc{E})) \simeq \bigoplus_{i=0}^r E(X) \alpha^i,
\end{equation*}
where $\P(\mc{E})$ is a projective bundle of dimension $r$ over a base scheme $X$ and $\alpha$ is a class in $E(\P(\mc{E}))$ satisfying some polynomial relation which determines a multiplicative structure.
Algebraic K-theory is an example of an orientable cohomology theory.

Grothendieck-Witt theory, on the other hand, is among the first examples of non-orientable cohomology theories.
A first relevant result in \cite{arason80} states that the classical Witt group of a projective space of dimension $r$ over a field $k$ satisfies
\begin{equation*}
    \W(\P^r_k) \cong \W(k),
\end{equation*}
which shows that a projective bundle formula for Grothendieck-Witt theory must be different than that of orientable theories.

If $X$ is a scheme over $\Spec \Z[1/2]$, then one says that $2$ is invertible in $X$.
Note that if $X = \Spec k$, this coincides with the usual notion of $2$ being invertible in $k$, in which case quadratic forms correspond bijectively to symmetric bilinear forms.
There is an analogue of this correspondence for schemes $X$ in which $2$ is invertible.
This simplifies Grothendieck-Witt theory significantly, and therefore the current paper is restricted to this case.

In \cite{walter03projective}, projective bundle formulas are proved for the classical Grothendieck-Witt group, and it is claimed in \cite[remark 9.11]{schlichting17} that these results immediately generalize to Grothendieck-Witt spectra, although a rigorous proof is only provided for the case of $\P^1_X$.
Even if the main results of this paper are not new, the techniques used to prove them are, and could be useful for future computations.
A detailed proof of a projective bundle formula for Grothendieck-Witt spectra is provided for the general case of a projective bundle over a base scheme $X$, thus filling a hiatus in the literature.
The proof contained in this paper uses a combination of the techniques of \cite{walter03projective}, \cite{schlichting17} and \cite{balmer05koszul}.
The present results, in the special case of trivial projective bundles, have recently been used in the proof of \cite[theorem 5.1]{karoubi20}, which contains a computation of the Grothendieck-Witt spectrum of a punctured affine space over $X$.

The projective bundle formulae for Grothendieck-Witt spectra are formulated as two main theorems.
Theorem \ref{maintheorem:projectivebundleformulaforgwwithtwistofdifferentparity} is harder to prove than theorem \ref{maintheorem:projectivebundleformulawithtwistofsameparityasdimension}, due to the parities of $m$ and $r$. 
In the statements of these theorems, $X$ is a scheme over $\Spec\Z[1/2]$ satisfying the resolution property, $\mc{E}$ is a locally free $\OO_X$-module of rank $r+1$, $s = \lceil r/2 \rceil$, $\P = \P(\mc{E})$ is the projective bundle with projection map $\pi: \P \rightarrow X$, $\mc{L}$ is an invertible $\OO_X$-module, $m,n \in \Z$ and $\pi^*\mc{L}(m) = \OO_{\P}(m) \otimes \pi^*\mc{L}$ is the $m$-th twist of $\pi^*\mc{L}$. 

\begin{maintheorem} \label{maintheorem:projectivebundleformulawithtwistofsameparityasdimension}
The following statements hold.
\begin{enumerate}[label=(\roman*)]
    \item If $m$ and $r$ are even, then there is a stable equivalence of spectra
    \begin{equation*}
        \begin{tikzcd}[cramped]
            \GW^{[n]}(X, \mc{L}) \oplus \K(X)^{\oplus s} \arrow[r]
                & \GW^{[n]}(\P, \pi^*\mc{L}(m)).
        \end{tikzcd}    
    \end{equation*}
    \item If $m$ and $r$ are odd, then there is a stable equivalence of spectra
    \begin{equation*}
        \begin{tikzcd}[cramped]
            \K(X)^{\oplus s} \arrow[r]
                & \GW^{[n]}(\P, \pi^*\mc{L}(m)).
        \end{tikzcd}    
    \end{equation*}
\end{enumerate}
\end{maintheorem}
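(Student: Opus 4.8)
The plan is to decompose the dg category $\sPerf(\P)$ of strictly perfect complexes on $\P$ — equipped with the $n$-shifted, $\pi^*\mc{L}(m)$-twisted duality whose Grothendieck--Witt spectrum is $\GW^{[n]}(\P,\pi^*\mc{L}(m))$ — into a semiorthogonal chain of copies of $\sPerf(X)$ which is stable under the duality, and then to apply the additivity theorem for Grothendieck--Witt spectra in order to peel off the resulting dual pairs of blocks one at a time. The resolution property for $X$, and hence for $\P$ (which then carries an ample family of line bundles), is what lets strictly perfect complexes compute the Grothendieck--Witt and $\K$-theory spectra in question, so that the Euler-sequence and Koszul-complex manipulations of \cite{walter03projective} and \cite{balmer05koszul} can be carried out with honest finite complexes of vector bundles; the hypothesis $1/2\in\OO_X$ is used to keep the relevant dualities well behaved.

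First, following \cite{walter03projective}, for every $a\in\Z$ the full dg subcategories $\mc{A}_t:=\pi^*\sPerf(X)\otimes\OO_\P(a+t)$ ($t=0,\dots,r$) form a semiorthogonal decomposition $\sPerf(\P)=\langle\mc{A}_0,\dots,\mc{A}_r\rangle$ with each $\mc{A}_t$ admissible and each twist functor $A\mapsto\pi^*A\otimes\OO_\P(a+t)$ an equivalence $\sPerf(X)\xrightarrow{\ \sim\ }\mc{A}_t$; the relative version of Beilinson's resolution of the diagonal, together with the Koszul complex of the tautological section, supplies the generators and the finite-complex bookkeeping. Next, a direct computation gives $\sheafhom(\pi^*A\otimes\OO_\P(a+t),\pi^*\mc{L}(m))[n]\cong\pi^*\sheafhom(A,\mc{L})\otimes\OO_\P(m-a-t)[n]$, so the duality sends $\mc{A}_t$ to the block $\mc{A}_{m-2a-t}$; choosing $a=(m-r)/2$ — an integer precisely because $m\equiv r\pmod 2$, which holds in both cases of the theorem — makes the duality reverse the chain, $\mc{A}_t\mapsto\mc{A}_{r-t}$. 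Consequently the chain splits into $s=\lceil r/2\rceil$ dual pairs of outer blocks together with, exactly when $r$ is even, one self-dual central block $\mc{A}_{r/2}$, which has twist $m/2$ and on which the induced duality corresponds under the equivalence above to $A\mapsto\sheafhom(A,\mc{L})[n]$ — crucially, no relative-canonical twist intervenes, since one never passes through $\pi_*$ — so that $\GW(\mc{A}_{r/2})\simeq\GW^{[n]}(X,\mc{L})$, while $\K(\mc{A}_t)\simeq\K(X)$ for every $t$.

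It then remains to iterate the Grothendieck--Witt localization and additivity theorems in the form used in \cite{schlichting17}: the admissible subcategory $\langle\mc{A}_1,\dots,\mc{A}_{r-1}\rangle$ is duality-stable, the corresponding quotient $\langle\mc{A}_0,\mc{A}_r\rangle$ is the hyperbolic category on $\mc{A}_0$ and has Grothendieck--Witt spectrum $\K(\mc{A}_0)\simeq\K(X)$, and the resulting Grothendieck--Witt localization sequence splits via the hyperbolic and forgetful functors, peeling off a copy of $\K(X)$. Repeating down to the centre produces exactly the stable equivalences of statements (i) and (ii), in which the summand $\K(X)^{\oplus s}$ is realized by $s$ hyperbolic form functors through the outer blocks $\mc{A}_0,\dots,\mc{A}_{s-1}$ and, in case (i), the summand $\GW^{[n]}(X,\mc{L})$ by the form functor $A\mapsto\pi^*A\otimes\OO_\P(m/2)$ on the central block.

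The main obstacle is this last step. Upgrading Walter's group-level \enquote{hyperbolic pair} bookkeeping to a statement about spectra requires checking that the subcategories $\mc{A}_t$ are admissible in the homotopical sense demanded by Schlichting's Grothendieck--Witt localization machinery, that the duality genuinely reverses the chain once all twists and shifts are accounted for, and that the induced duality on the central block is identified correctly — precisely the verification left implicit in \cite[remark 9.11]{schlichting17}. Producing the strictly perfect, rather than merely derived, semiorthogonal decomposition is the other nontrivial ingredient, but it is comparatively routine given \cite{balmer05koszul} and \cite{walter03projective}.
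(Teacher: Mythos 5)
Your proposal is correct and follows essentially the same route as the paper: normalize the twist so that $m\equiv r\pmod 2$ allows the standard semi-orthogonal decomposition of $\Perf(\P)$ by twists to be reversed by the duality (the paper's theorem \ref{theorem:semiorthogonaldecompositionprojectivebundle} and proposition \ref{proposition:dualitydecompositionprojectivebundle}), identify the induced duality on the self-dual central block as the $\mc{L}$-twisted one, and apply additivity for Grothendieck--Witt and connective K-theory (theorem \ref{theorem:generaladditivityforgw}). The only cosmetic difference is that you peel off hyperbolic pairs iteratively, whereas the paper packages this into a single general additivity statement whose proof splits the chain into two dual halves at once.
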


\begin{maintheorem} \label{maintheorem:projectivebundleformulaforgwwithtwistofdifferentparity}
The following statements hold.
\begin{enumerate}[label=(\roman*)]
    \item If $r$ is even and $m$ is odd, then there is a split homotopy fibration
    \begin{equation*}
        \begin{tikzcd}[column sep=small]
            K(X)^{\oplus s-1} \arrow[r]
                & \GW^{[n]}(\P, \pi^*\mc{L}(m)) \arrow[r]
                    & \GW^{[n-r]}(X,\det \mc{E}^{\vee} \otimes \mc{L}).
        \end{tikzcd}
    \end{equation*}
    \item If $r$ is odd and $m$ is even, then there is a homotopy fibration
    \begin{equation*}
        \begin{tikzcd}[column sep=tiny]
            \GW^{[n]}(X, \mc{L}) \oplus K(X)^{\oplus s-1} \arrow[r]
                &[-0.1em] \GW^{[n]}(\P, \pi^*\mc{L}(m)) \arrow[r]
                    &[-0.1em] \GW^{[n-r]}(X,\det \mc{E}^{\vee} \otimes \mc{L}),
        \end{tikzcd}
    \end{equation*}
    which splits if the element $\nu'$ of lemma \ref{lemma:middletermexterioralgebraissymmetric} vanishes in $\W^{[r+1]}_0(X, \det \mc{E})$.
    The condition for the splitting is satisfied e.g. if $\mc{E}$ is a trivial bundle.
\end{enumerate}
\end{maintheorem}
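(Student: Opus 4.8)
The plan is to argue entirely inside the world of dg categories of strictly perfect complexes, using Schlichting's foundations for Grothendieck--Witt spectra of dg categories with weak equivalences and duality over $\Z[1/2]$. Three inputs carry the argument: the localization theorem (an appropriate exact sequence of dg categories with duality induces a homotopy fibration of $\GW^{[n]}$-spectra); additivity for the hyperbolic part of a semiorthogonal decomposition (if $\mc{T}=\langle\mc{P},\mc{M},D\mc{P}\rangle$ with $\mc{M}$ self-dual and $D$ exchanging the two copies of $\mc{P}$, then $\GW(\mc{T})\simeq\GW(\mc{M})\oplus\K(\mc{P})$); and the identification of the $\GW$-spectrum of a hyperbolic dg category $\mc{H}(\mc{B})$ with $\K(\mc{B})$. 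All the spectra in the statement are $\GW^{[\bullet]}$ of $\sPerf(X)$, resp.\ of $\sPerf(\P)$ with $\P=\P(\mc{E})$, for the shifted twisted duality $D_{m,n}:=R\sheafhom_{\P}(-,\pi^{*}\mc{L}(m)[n])$.

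I would begin from Quillen's semiorthogonal decomposition $\sPerf(\P)=\langle\mc{A}_{a},\dots,\mc{A}_{a+r}\rangle$ (valid for every $a\in\Z$, with $\mc{A}_{i}\simeq\sPerf(X)$ via $F\mapsto\pi^{*}F(i)$), noting that $D_{m,n}$ sends $\pi^{*}F(i)$ to $\pi^{*}(F^{\vee}\otimes\mc{L})(m-i)[n]$ and hence acts on the decomposition by the reflection $\mc{A}_{i}\leftrightarrow\mc{A}_{m-i}$. For Theorem~\ref{maintheorem:projectivebundleformulawithtwistofsameparityasdimension} one may choose $a$ so that the index block $\{a,\dots,a+r\}$ is invariant under $i\mapsto m-i$; here, because $m\not\equiv r\pmod 2$, the best one can do (choosing $a$ with $2a+r-1=m$) leaves one extreme term, say $\mc{A}_{a+r}$, sent by $D_{m,n}$ to $\mc{A}_{a-1}$ outside the block. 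Peeling off successive dual pairs $\mc{A}_{i}\leftrightarrow\mc{A}_{m-i}$ from the $D_{m,n}$-stable interior $\langle\mc{A}_{a+1},\dots,\mc{A}_{a+r-2}\rangle$ with additivity for the hyperbolic part contributes the summand $\K(X)^{\oplus s-1}$ and reduces everything to the residual core generated by the few remaining twists (together with the fixed term $\mc{A}_{a+(r-1)/2}$ when $r$ is odd, on which $D_{m,n}$ restricts to the duality defining $\GW^{[n]}(X,\mc{L})$).

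The heart of the matter is to identify this residual core. Here I would use the Koszul complex on $\P(\mc{E})$,
\[
0\lra\pi^{*}\det\mc{E}^{\vee}(r+1)\lra\cdots\lra\pi^{*}\mc{E}^{\vee}(1)\lra\OO_{\P}\lra 0,
\]
following \cite{walter03projective} and \cite{balmer05koszul}: twisting it suitably and using it to rewrite the offending term $\mc{A}_{a-1}=D_{m,n}\mc{A}_{a+r}$ modulo the self-dual interior turns the residual core into a self-dual piece built from $\pi^{*}\sPerf(X)$, on which the surviving pairing is governed by the relative dualizing datum $\omega_{\P/X}=\OO_{\P}(-r-1)\otimes\pi^{*}\det\mc{E}^{\vee}$ read off from the top term of the complex --- this produces the twist $\det\mc{E}^{\vee}\otimes\mc{L}$ and the shift by the relative dimension $r$, so the piece computes $\GW^{[n-r]}(X,\det\mc{E}^{\vee}\otimes\mc{L})$. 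In case~(i) this finishes the proof: the residual core is exactly one such self-dual copy, giving the split homotopy fibration
\[
\K(X)^{\oplus s-1}\lra\GW^{[n]}(\P,\pi^{*}\mc{L}(m))\lra\GW^{[n-r]}(X,\det\mc{E}^{\vee}\otimes\mc{L}).
\]
In case~(ii) the residual core also contains the fixed term, and applying the localization theorem to the $D_{m,n}$-stable subcategory it generates yields the homotopy fibration
\[
\GW^{[n]}(X,\mc{L})\oplus\K(X)^{\oplus s-1}\lra\GW^{[n]}(\P,\pi^{*}\mc{L}(m))\lra\GW^{[n-r]}(X,\det\mc{E}^{\vee}\otimes\mc{L}),
\]
which splits precisely when its boundary map vanishes. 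A natural section comes from a symmetric form on the Koszul complex itself, and whether that complex carries a form of the required symmetry --- equivalently whether the relevant class is stably metabolic --- is the content of lemma~\ref{lemma:middletermexterioralgebraissymmetric}: the obstruction is the class $\nu'\in\W^{[r+1]}_{0}(X,\det\mc{E})$, which vanishes, for instance, when $\mc{E}$ is trivial, since then the Koszul form is the standard split one. I expect the main obstacle to be this middle step: transporting $D_{m,n}$ --- with its $\epsilon$-symmetry and its degree shift --- through the Koszul complex term by term, in order to pin down both the twist $\det\mc{E}^{\vee}$ and the shift $[n-r]$ and to compute the symmetry of the resulting form. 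This is exactly the computation that distinguishes the present theorem from Theorem~\ref{maintheorem:projectivebundleformulawithtwistofsameparityasdimension}, where $D$ acts by an honest reflection and no Koszul-complex surgery is needed, and it is also what manufactures the obstruction $\nu'$.
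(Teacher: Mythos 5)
Your strategy coincides with the paper's: the same semi-orthogonal decomposition of $\Perf(\P)$ with the duality acting by the reflection $i\mapsto m-i$, additivity applied to the duality-stable interior block to produce the $\K(X)$ summands, localization along $\mc{A}_0\to\mc{A}\to\mc{A}/\mc{A}_0$, identification of the quotient with $\Perf(X)$ carrying the duality twisted by $\det\mc{E}^{\vee}\otimes\mc{L}[-r]$ by means of the Koszul complex, and a Koszul-theoretic section whose obstruction is $\nu'$.

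The one step where your sketch, taken literally, would fail is the splitting. The Koszul complex $K$ is acyclic, so the nondegenerate wedge-product form on $K$ itself represents the zero symmetric space and cannot furnish a section; correspondingly, the obstruction is not whether ``the Koszul complex carries a form of the required symmetry'' (it always carries one, by proposition \ref{proposition:koszulisselfdual}). What the paper constructs instead is a symmetric quasi-isomorphism $\psi\colon H\to[H,\Delta[r]]$ on (roughly) \emph{half} of $K$, characterized by the property that its cone is $K$ up to a split acyclic summand; the section is cup product with $(H,\psi)$, and its compatibility with the localization sequence is checked by showing that the image of $\psi$ in $\mc{A}/\mc{A}_0$ agrees with the canonical quasi-isomorphism $\bar{\phi}\colon\Delta[r]\to\OO$ of lemma \ref{lemma:koszulsymmetricformisquasiisomorphisminquotient}, which is also what makes the quotient identification a form functor rather than merely an equivalence of dg categories. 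When $r$ is even, $K$ cuts cleanly in half. When $r$ is odd, the middle term $\Lambda^{s}\pi^{*}\mc{E}(-s)$ sits in the way, carrying the $(-1)^{s}$-symmetric wedge form $\nu$; the hypothesis $\nu'=0$ in $\W^{[r+1]}_0(X,\det\mc{E})$ is used precisely so that, after adding a metabolic complement $\mc{N}'$, the middle term splits as a Lagrangian $\mc{P}'$ plus its dual, and $H$ can be completed by $\mc{P}$ together with a correction term $\mc{S}\hookrightarrow\mc{N}\twoheadrightarrow\mc{S}^{\natural}$ keeping the cone acyclic. This construction (theorem \ref{theorem:middletermofkoszulcomplexwittnilcutskoszulintwo}) is the technical heart of part (ii) and is the piece missing from your outline; the rest of what you propose matches the paper's argument.
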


This paper is organized as follows.
Section \ref{subsection:dualityonthecategoryofperfectcomplexes} contains background on the dg category $\Perf(X)$ of perfect complexes on a scheme $X$, and the dualities on this category.
In section \ref{subsection:constructingsymmetricformsfromkoszulcomplexes}, the necessary technical tools for the proofs of the main theorems are developed; these consist mainly of certain symmetric forms that are constructed from the Koszul complex.
The most involved construction is that of theorem \ref{theorem:middletermofkoszulcomplexwittnilcutskoszulintwo} and provides the splitting of the homotopy fibration found in theorem \ref{maintheorem:projectivebundleformulaforgwwithtwistofdifferentparity}(ii).
Section \ref{subsection:localizationandadditivityforgrothendieckwittspectra} recalls some important results from \cite{schlichting17} and casts them into a form that is better suited to this paper.
Finally, main theorems \ref{maintheorem:projectivebundleformulawithtwistofsameparityasdimension} and \ref{maintheorem:projectivebundleformulaforgwwithtwistofdifferentparity} are proved in section \ref{subsection:theprojectivebundleformulaforgrothendieckwittspectra} as theorems \ref{theorem:projectivebundleformulawithtwistofsameparityasdimension} and \ref{theorem:projectivebundleformulaforgwwithtwistofdifferentparity}, respectively.

\subsection*{Acknowledgements}
    \label{subsection:acknowledgements}
    
I am deeply indebted to Marco Schlichting and Heng Xie for providing key ideas behind the present results, as well as for helpful discussions to clarify these ideas.
One idea that stands out in particular is that to consider symmetric forms on the Koszul complex.
Additionally, I would like to thank Jens Hornbostel, Marcus Zibrowius and Sean Tilson for many helpful discussions.

\section{The projective bundle formula for Grothendieck-Witt spectra}
    \label{section:theprojectivebundleformulaforgrothendieckwittspectra}
    
\subsection{Duality on the dg category of perfect complexes}
    \label{subsection:dualityonthecategoryofperfectcomplexes}
    
This section introduces the general yoga of duality on the dg category of perfect complexes on a scheme.
In the next section, this yoga is then used to construct some specific objects needed in the proof of the main theorems.
    
The sign conventions used will be the same as those in \cite[section 1.11]{schlichting17}.
Let $X$ be a scheme with the resolution property, that is, satisfying $\Perf(X) \simeq \sPerf(X)$.
An important class of schemes that satisfy this property is that of schemes with an ample family of line bundles.
For ease of notation, let $\OO = \OO_X$.
All tensor products in this section are taken over $\OO$, unless indicated otherwise.
Let $\Perf(X)$ be the usual closed symmetric monoidal pretriangulated dg category of perfect complexes of $\OO$-modules, where the monoidal structure is given by the tensor product of complexes and the monoidal unit is the perfect complex $\OO$, concentrated in cohomological degree $0$. 

It is useful to think of $\Perf(X)$ as a subcategory of the category of dg $\OO$-modules.
As $\Perf(k)$ is the subcategory of the dg category of chain complexes of $k$-vector spaces consisting of the perfect complexes, so $\Perf(X)$ is the subcategory of the dg category of chain complexes of vector bundles on $X$ consisting of the perfect complexes; the sheaf of rings $\OO$ takes the place of the commutative ring $k$.

The following conventions are followed here:
\begin{enumerate} [label=(\roman*)]
    \item complexes of $\OO$-modules are cohomologically graded;
    \item for an object $A$ of $\Perf(X)$ with differential $d$, the $i$-th differential is $d_i:A_i \ra A_{i+1}$;
    \item for $n \in \Z$, the $n$-th shift of $A$ is denoted by $A[n]$, with $A[n]_i = A_{i+n}$;
    \item a homogeneous morphism $f: A \ra B$ in $\Perf(X)$ has degree $j$ if its components are $f_i:A_i \ra B_{i+j}$ for all $i \in \Z$; and
    \item since $\Perf(X) \simeq \sPerf(X)$ by assumption, an object $A$ of $\Perf(X)$ will always be assumed to be a cohomologically bounded complex of locally free $\OO$-modules.
\end{enumerate}

\begin{definition} \label{def:dualityonperf}
Let $A$ be an object of $\Perf(X)$.
The \emph{duality defined by $A$} consists of the following functor $\vee_A: \Perf(X)^{\op} \rightarrow \Perf(X)$ and natural transformation $\can^A: \id \rightarrow \vee_A \circ \vee_A^{\op}$:
\begin{enumerate}[label=(\roman*)]
    \item on objects, $\vee_A$ is given by $F \mapsto [F,A]$;
    \item on mapping complexes, the component $[F,G] \rightarrow \left[[G,A],[F,A]\right]$ of $\vee_A$ is given by
        \begin{equation*}
            f^{\vee_A} = (g \mapsto (-1)^{|f||g|}gf)
        \end{equation*}
        for homogeneous $f \in [F,G]$ and $g \in [G,A]$; and
    \item the canonical double dual identification $\can^A_F: F \rightarrow [[F,A],A]$ on an object $F$ is given by
        \begin{equation*}
            \can^A_F(x) = (f \mapsto (-1)^{|x||f|}f(x)).
        \end{equation*}
\end{enumerate}
The category $\Perf(X)$ equipped with the duality defined by $A$ is denoted 
\begin{equation*}
    (\Perf(X), \vee_A, \can^A),
\end{equation*}
or shortly $\Perf(X)^{[A]}$.
When $A$ is understood, it might be dropped from the notation, especially in the case where $A$ is the monoidal unit $\OO$.
For $n \in \Z$, $\Perf(X)^{[n]}$ abbreviates $\Perf(X)^{[\OO[n]]}$.
\end{definition}

Note that, for any $A \in \Perf(X)$, one also obtains the dg category with duality $(\Perf(X), \vee_A, -\can^A)$, the duality of which is sometimes called the \emph{skew-duality} defined by $A$. 

The canonical double dual identification $\can^A$ is in general \emph{not} a natural isomorphism, except if $A$ is an invertible complex, that is, if $A = \mc{L}[n]$ is the shift of a line bundle $\mc{L}$ on $X$.
All the dualities considered here will be of this type.
For a line bundle $\mc{L}$ on $X$, the duality $\vee_{\mc{L}[n]}$ on $\Perf(X)$ is closely related to the duality $\vee_{\mc{L}}$ on $\Vect(X)$, given by
\begin{equation*}
    \mc{F}^{\vee_{\mc{L}}} = \sheafhom(\mc{F}, \mc{L}),
\end{equation*}
whose canonical double dual identification is the evaluation map $\ev_{\mc{F}}: \F \rightarrow \F^{\vee_{\mc{L}}\vee_{\mc{L}}}$. 
Fix a line bundle $\mc{L}$ on $X$, let $n \in \Z$ and $L = \mc{L}[n]$, and equip $\Perf(X)$ and $\Vect(X)$ with the dualities $\vee_{L}$ and $\vee = \vee_{\mc{L}}$, respectively.
Let $A$ be an object of $\Perf(X)$ with differential $d$.
Let $B$ be another object of $\Perf(X)$ and let $f \in [A,B]_j$.
Definition \ref{def:dualityonperf} forces:
\begin{equation} \label{equation:dualityidentities}
    \begin{aligned}
        & (A^{\vee_{L}})_i  && = && A_{-i-n}^{\vee} \\
        & (d^{\vee_{L}})_i  && = && (-1)^{i+1}d_{-i-1-n}^{\vee} \\
        & (f^{\vee_{L}})_i  && = && (-1)^{ij} f^{\vee}_{-i-j-n} \\
        & (\can^{L}_{A})_i  && = && (-1)^{i(n+1)}\ev_{A_i}.
    \end{aligned}
\end{equation}
These identities reveal how the duality on $\Perf(X)$ relates to the perhaps more familiar duality on $\Vect(X)$. 

The following lemma from algebraic geometry, \cite[proposition 7.7]{goertz10}, plays an important role in the theory of duality on $\Perf(X)$.
Indeed, corollary \ref{corollary:shifteddualitynaturalisomorphism} is crucial in the proof of proposition \ref{proposition:tensoringbysymmetricformisformfunctor}, which shows that tensoring by a symmetric form in $\Perf(X)^{[L_1]}$ preserves duality.

\begin{lemma} \label{lemma:sheafhomcommuteswithtensorinsecondvariable}
Let $\F$, $\G$ and $\mc{H}$ be $\OO$-modules. 
If $\F$ or $\mc{H}$ is finite locally free, then the canonical map
\begin{equation*}
    \phi: \G \otimes_{\OO} \sheafhom(\F, \mc{H}) \rightarrow \sheafhom(\F, \G \otimes_{\OO} \mc{H})
\end{equation*}
given by $s \otimes f \mapsto (t \mapsto s \otimes f(t))$ is an isomorphism.
\end{lemma}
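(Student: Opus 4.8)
The plan is to reduce to the affine case by the usual sheaf-theoretic argument and then verify the isomorphism on stalks (or on sections over a small affine open). First I would observe that the statement is local on $X$: both sides are quasi-coherent $\OO$-modules when $\F$, $\G$, $\mc{H}$ are, and $\phi$ is a morphism of sheaves, so it suffices to check that $\phi$ is an isomorphism after restricting to an affine open $\Spec R \subseteq X$ small enough that the finite locally free module among $\F$, $\mc{H}$ becomes free. On such an open, $\sheafhom$ is computed by $\Hom_R$, so we are reduced to the purely algebraic claim: for $R$-modules $F$, $G$, $H$ with $F$ or $H$ finite free over $R$, the canonical map $G \otimes_R \Hom_R(F,H) \to \Hom_R(F, G \otimes_R H)$, $s \otimes f \mapsto (t \mapsto s \otimes f(t))$, is an isomorphism.

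Next I would handle the algebraic claim by a standard dévissage on the finite free module. Say $F \cong R^n$ (the case $H$ finite free is symmetric, using the natural identification $\Hom_R(F,H) \cong \Hom_R(F,R)\otimes_R H$ when $H$ is finite free, or just running the same argument with the roles adjusted). Both functors $G \mapsto G \otimes_R \Hom_R(R^n, H)$ and $G \mapsto \Hom_R(R^n, G \otimes_R H)$ are additive in the slot $R^n$, i.e. send $R^n = R^{\oplus n}$ to $n$-fold direct sums, and $\phi$ is compatible with these decompositions, so it is enough to treat $n = 1$. For $n=1$ the map $\phi$ becomes the evident isomorphism $G \otimes_R \Hom_R(R,H) \cong G \otimes_R H \cong \Hom_R(R, G\otimes_R H)$, which one checks sends $s \otimes f$ to $(t \mapsto s \otimes f(t))$ as required. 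This finishes the free case, and hence — after passing to a cover trivializing the locally free module — the general sheaf statement.

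The only mildly delicate point is bookkeeping the two hypotheses ($\F$ finite locally free versus $\mc{H}$ finite locally free) uniformly: in the first case one trivializes $\F$ and argues as above; in the second case one should first note the natural isomorphism $\sheafhom(\F,\mc{H}) \cong \sheafhom(\F,\OO) \otimes_{\OO} \mc{H}$ valid when $\mc{H}$ is finite locally free, reducing to the case $\mc{H} = \OO$, and then the statement is the tautology $\G \otimes \sheafhom(\F,\OO) \cong \sheafhom(\F,\G)$ which again is local and reduces to $\F$ free. I expect this case-splitting, together with checking that the various canonical identifications really do match $\phi$ on elements, to be the main (still entirely routine) obstacle; alternatively one can simply cite \cite[proposition 7.7]{goertz10} as indicated, in which case nothing remains to prove.
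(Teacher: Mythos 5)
The paper offers no proof of this lemma at all---it is quoted verbatim from the reference and used only once, in corollary \ref{corollary:tensorcommuteswithhominsecondvariableonperf}---so the comparison is really with your argument on its own terms. Your first branch, where $\F$ is finite locally free, is correct and standard: the isomorphism question is local, one restricts to opens trivializing $\F$, and both sides become $(\G \otimes \mc{H})^{\oplus n}$ compatibly with $\phi$. (The detour through quasi-coherence and affine opens is unnecessary here, and slightly off since the lemma is stated for arbitrary $\OO$-modules; being an isomorphism of sheaves is local regardless, so trivializing the locally free module is all that is needed.) This is also the only case the paper ever uses, since in corollary \ref{corollary:tensorcommuteswithhominsecondvariableonperf} the source of the $\sheafhom$ is a component $M_{-p-n}$ of a strictly perfect complex, hence finite locally free.

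Your second branch, however, has a genuine gap that cannot be repaired as written. After correctly reducing to $\mc{H} = \OO$ via $\sheafhom(\F,\mc{H}) \cong \sheafhom(\F,\OO) \otimes \mc{H}$, you assert that $\G \otimes \sheafhom(\F,\OO) \cong \sheafhom(\F,\G)$ is a tautology that ``reduces to $\F$ free''---but in this branch $\F$ carries no finiteness hypothesis, and the claim is false in general. Concretely, take $X = \Spec k$ for a field $k$, $\F = \G = k^{(\N)}$ and $\mc{H} = k$: the canonical map $\G \otimes \Hom(\F,k) \to \Hom(\F,\G)$ lands in the homomorphisms of finite rank and misses the identity, so it is not surjective even though $\mc{H}$ is finite free. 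The true hypothesis (and the one Görtz--Wedhorn actually prove) is that $\F$ \emph{or the tensor factor $\G$} is finite locally free; the lemma as printed, with ``$\F$ or $\mc{H}$'', appears to be a typo. This does not affect the paper, whose application satisfies the correct hypothesis, but your proof of the second case is proving a false statement and breaks exactly at the step you flagged as routine.
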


\begin{corollary} \label{corollary:tensorcommuteswithhominsecondvariableonperf}
Let $M$ and $N$ be in $\Perf(X)$, $n \in \Z$, and $\mc{L}$ a line bundle on $X$.
Then there is a natural isomorphism
\begin{equation*}
    \phi: N \otimes [M, \mc{L}[n]] \longrightarrow [M, N \otimes \mc{L}[n]]
\end{equation*}
given by $s \otimes f \mapsto (x \mapsto s \otimes f(x))$.
\end{corollary}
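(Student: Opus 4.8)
The plan is to reduce the statement about perfect complexes to the underlying statement about sheaves, which is exactly Lemma \ref{lemma:sheafhomcommuteswithtensorinsecondvariable}, and then to check that the degreewise isomorphisms assemble into a morphism of complexes that is natural in $M$ and $N$. First I would unwind the internal hom $[M, \mc{L}[n]]$ using the conventions fixed above: since $\mc{L}[n]$ is the shift of a line bundle, each term of $[M,\mc{L}[n]]$ is of the form $\sheafhom(M_{j}, \mc{L})$ placed in the appropriate cohomological degree, and likewise $[M, N\otimes\mc{L}[n]]$ has terms built from $\sheafhom(M_j, N_k\otimes\mc{L})$. Because we have assumed $\Perf(X)\simeq\sPerf(X)$, every $M_j$ and $N_k$ is finite locally free, so Lemma \ref{lemma:sheafhomcommuteswithtensorinsecondvariable} applies with $\F = M_j$, $\G = N_k$ and $\mc{H} = \mc{L}$, giving an isomorphism $N_k\otimes\sheafhom(M_j,\mc{L}) \xrightarrow{\sim} \sheafhom(M_j, N_k\otimes\mc{L})$ in each relevant pair of degrees. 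Taking the appropriate direct sums (finite, since the complexes are bounded) over the internal degrees produces the degreewise isomorphism underlying $\phi$.

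Next I would verify that these degreewise maps commute with the differentials, so that $\phi$ is a morphism in $\Perf(X)$ and not merely a degreewise isomorphism of graded objects. The differential on an internal hom complex is the usual one, $f\mapsto d_{N\otimes\mc{L}[n]}\circ f - (-1)^{|f|} f\circ d_M$, and the differential on $N\otimes[M,\mc{L}[n]]$ is the Leibniz differential; the sign bookkeeping here is dictated by the Koszul rule and matches precisely the sign appearing in the formula $s\otimes f \mapsto (x\mapsto s\otimes f(x))$, so the compatibility is a direct computation. Since the components of $\phi$ involve no signs beyond those already built into the graded tensor and internal hom, this step is routine once the indexing is set up. Naturality in $M$ and in $N$ then follows immediately from the corresponding naturality of $\phi$ in Lemma \ref{lemma:sheafhomcommuteswithtensorinsecondvariable}, together with functoriality of shift and of $\otimes$; one just checks the relevant square commutes on each homogeneous component.

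The only real subtlety — and the step I expect to require the most care — is the sign and degree bookkeeping that enters when one writes $[M,\mc{L}[n]]$ in terms of the $\sheafhom$'s of the individual terms $M_j$, i.e. reconciling the shift $[n]$ and the cohomological grading conventions (i)--(v) with the formulas in \eqref{equation:dualityidentities}. Once that translation is pinned down, the isomorphism is term-by-term Lemma \ref{lemma:sheafhomcommuteswithtensorinsecondvariable} and everything else is formal. I would therefore state the proof compactly: define $\phi$ by the given formula, invoke Lemma \ref{lemma:sheafhomcommuteswithtensorinsecondvariable} termwise using that $M$ and $N$ consist of finite locally free sheaves, note compatibility with differentials by the Koszul sign rule, and conclude naturality from that of the sheaf-level map.
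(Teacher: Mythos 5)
Your proposal is correct and follows essentially the same route as the paper: both reduce to Lemma \ref{lemma:sheafhomcommuteswithtensorinsecondvariable} applied termwise to the finite locally free components $M_j$, $N_k$, the relevant direct sums being well defined by boundedness. The only real difference is that where you verify compatibility with the differentials and naturality by a direct Koszul-sign computation, the paper sidesteps both by exhibiting $\phi$ as the composite of $\nabla\otimes 1$ (the unit of the tensor-hom adjunction for $\mc{L}[n]$) with the composition map $[\mc{L}[n], N\otimes\mc{L}[n]]\otimes[M,\mc{L}[n]]\to[M,N\otimes\mc{L}[n]]$, so that $\phi$ is automatically a natural closed morphism and the lemma is invoked only to see that its components are isomorphisms.
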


\begin{proof}
Note that $\phi$ is the composition
\begin{equation*}
    \begin{tikzcd}
        {N \otimes [M, \mc{L}[n]]} \arrow[r, "\nabla \otimes 1"]
            & {[\mc{L}[n], N \otimes \mc{L}[n]]} \otimes [M, \mc{L}[n]] \arrow[r, "\circ"]
                & {[M, N \otimes \mc{L}[n]]},
    \end{tikzcd}
\end{equation*}
and therefore a natural morphism.
Here, $\nabla: N \rightarrow [\mc{L}[n], N \otimes \mc{L}[n]]$ is the unit of the tensor-hom adjunction for $\mc{L}[n]$. 
Each component
\begin{equation*}
    \phi_i: (N \otimes [M, \mc{L}[n]])_i \longrightarrow [M, N \otimes \mc{L}[n]]_i
\end{equation*}
of $\phi$ is a map
\begin{equation*}
    \phi_i: \bigoplus_p N_{i-p} \otimes \sheafhom(M_{-p-n}, \mc{L}) \longrightarrow \bigoplus_q \sheafhom(M_{q-n}, N_{i+q} \otimes \mc{L})  
\end{equation*}
given by $s \otimes f \mapsto (x \mapsto s \otimes f(x))$. 
Thus $\phi_i$ is a direct sum of isomorphisms as in lemma \ref{lemma:sheafhomcommuteswithtensorinsecondvariable},
which ultimately yields that $\phi$ is a natural isomorphism.
\end{proof}

Let $\mc{L}_1$ and $\mc{L}_2$ be line bundles on $X$ and $m,n \in \Z$.
Set $L_1 = \mc{L}_1[m]$, $L_2 = \mc{L}_2[n]$ and $L_1L_2 = (\mc{L}_1 \otimes \mc{L}_2)[m+n]$, and denote simple tensors of elements similarly.

\begin{remark} \label{remark:naturalisomorphismtensorproductofshiftedlinebundles}
The map
\begin{equation*}
    \OO[m] \otimes \mc{L}_1 \otimes \OO[n] \otimes \mc{L}_2 \stackrel{1 \otimes \tau \otimes 1}{\longrightarrow} \OO[m] \otimes \OO[n] \otimes \mc{L}_1 \otimes \mc{L}_2,
\end{equation*}
where $\tau: \mc{L}_1 \otimes \OO[n] \ra \OO[n] \otimes \mc{L}_1$ is the twist map,
induces a natural isomorphism $L_1 \otimes L_2 \rightarrow L_1L_2$ given by 
\begin{equation*}
    1_{-m} \otimes x \otimes 1_{-n} \otimes y \longmapsto 1_{-m-n} \otimes xy,
\end{equation*}
where, for $i \in \Z$, $1_{-i} \in \OO[i]_{-i}$ is the multiplicative unit.
\end{remark}

\begin{corollary} \label{corollary:shifteddualitynaturalisomorphism}
Let $M,N \in \Perf(X)$.
Then there is natural isomorphism
\begin{equation*}
    \phi: [M,L_1] \otimes [N, L_2] \rightarrow [M \otimes N, L_1L_2]
\end{equation*}
given by $\phi(f \otimes g)(x \otimes y) = (-1)^{|x||g|}(f(x) \otimes g(y))$.
\end{corollary}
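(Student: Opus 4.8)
The plan is to realize $\phi$ as a composite of natural isomorphisms that are already at hand, so that it is automatically a morphism in $\Perf(X)$ and a natural isomorphism, and then to verify that this composite is described by the stated formula.

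For the construction, write $L_1 = \mc{L}_1[m]$ and $L_2 = \mc{L}_2[n]$. Corollary \ref{corollary:tensorcommuteswithhominsecondvariableonperf}, applied with its ``$N$'' equal to $[M,L_1]$, gives a natural isomorphism
\begin{equation*}
    [M,L_1]\otimes[N,L_2] \xrightarrow{\ \sim\ } [N,\,[M,L_1]\otimes L_2].
\end{equation*}
Using two applications of the twist map and one further application of Corollary \ref{corollary:tensorcommuteswithhominsecondvariableonperf}, one obtains a natural isomorphism $[M,L_1]\otimes L_2 \simeq [M,L_1\otimes L_2]$, and hence
\begin{equation*}
    [N,\,[M,L_1]\otimes L_2] \xrightarrow{\ \sim\ } [N,\,[M,L_1\otimes L_2]].
\end{equation*}
Finally, the currying (tensor--hom) adjunction isomorphism $[N,[M,C]] \simeq [M\otimes N, C]$ of the closed symmetric monoidal structure, taken with $C = L_1\otimes L_2$, followed by the isomorphism $L_1\otimes L_2 \xrightarrow{\sim} L_1L_2$ of Remark \ref{remark:naturalisomorphismtensorproductofshiftedlinebundles}, gives
\begin{equation*}
    [N,\,[M,L_1\otimes L_2]] \xrightarrow{\ \sim\ } [M\otimes N,\, L_1\otimes L_2] \xrightarrow{\ \sim\ } [M\otimes N,\, L_1L_2].
\end{equation*}
The composite of these maps is a natural isomorphism $[M,L_1]\otimes[N,L_2] \to [M\otimes N, L_1L_2]$, and since each factor is a morphism in $\Perf(X)$, so is the composite.

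It then remains to check that this composite is the map $f\otimes g \mapsto \big(x\otimes y\mapsto (-1)^{|x||g|}(f(x)\otimes g(y))\big)$, which I would do by chasing a homogeneous elementary tensor through the chain above. The evaluation pattern $f(x)\otimes g(y)$ is immediate from the formulas in Corollary \ref{corollary:tensorcommuteswithhominsecondvariableonperf} and Remark \ref{remark:naturalisomorphismtensorproductofshiftedlinebundles}; the one delicate point --- and the step I expect to be the main obstacle --- is the Koszul sign, which originates from the twist maps interchanging the degree-$|x|$ element $x$ of $M$ with the degree-$|g|$ element $g$ of $[N,L_2]$, and which has to be reconciled with the reindexing by the shifts $m, n$ recorded in \eqref{equation:dualityidentities}. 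I expect this to take care but no new ideas.

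A more hands-on alternative is to take the displayed formula as the definition of $\phi$, verify directly that it is a degree-$0$ chain map via a sign computation with the differentials in \eqref{equation:dualityidentities}, and then verify that it is an isomorphism termwise: in each cohomological degree, after the finite reindexing available because the complexes are bounded, $\phi$ is a direct sum of the canonical maps $\sheafhom(M_a,\mc{L}_1)\otimes\sheafhom(N_b,\mc{L}_2) \to \sheafhom(M_a\otimes N_b,\mc{L}_1\otimes\mc{L}_2)$, each of which is an isomorphism since $M_a$, $N_b$, $\mc{L}_1$ and $\mc{L}_2$ are finite locally free --- this is two applications of Lemma \ref{lemma:sheafhomcommuteswithtensorinsecondvariable}. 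A degree-$0$ chain map that is a termwise isomorphism is an isomorphism in $\Perf(X)$, which finishes the proof.
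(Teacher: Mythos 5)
Your proposal is correct and follows essentially the same route as the paper: the paper also realizes $\phi$ as a composite of the twist, two applications of Corollary \ref{corollary:tensorcommuteswithhominsecondvariableonperf}, the tensor--hom adjunction, and the isomorphism of Remark \ref{remark:naturalisomorphismtensorproductofshiftedlinebundles}, differing from yours only in the order in which the two hom-variables are curried out. Your termwise fallback via two applications of Lemma \ref{lemma:sheafhomcommuteswithtensorinsecondvariable} is a sound alternative but is not needed.
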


\begin{proof}
The map $\phi$ is the composition
\begin{equation*}
    \begin{tikzcd}[column sep=small]
        \left[M, L_1\right] \otimes \left[N, L_2\right] \arrow[r, "\tau"]
            & \left[N, L_2\right] \otimes \left[M, L_1\right] \arrow[r, "\alpha"]
                & \left[M, \left[N, L_2\right] \otimes L_1\right] \arrow[dll, "{[1,\tau]}" description] \\
        \left[M, L_1 \otimes \left[N, L_2\right]\right] \arrow[r, swap, "\beta"]
            & \left[M, \left[N, L_1 \otimes L_2\right]\right] \arrow[r]
                & \left[M \otimes N, L_1L_2\right],
    \end{tikzcd}
\end{equation*}
where $\alpha$ and $\beta$ are natural isomorphisms as in corollary \ref{corollary:tensorcommuteswithhominsecondvariableonperf}, and the final map is the tensor-hom adjunction combined with the natural isomorphism of remark \ref{remark:naturalisomorphismtensorproductofshiftedlinebundles}.
\end{proof}

Essentially, the above corollary states that
\begin{equation*}
    M^{\vee_{L_1}} \otimes N^{\vee_{L_2}} \cong (M \otimes N)^{\vee_{L_1L_2}},
\end{equation*}
which shows that tensor products are well-behaved with respect to duality, and that it should be possible to mix dualities on $\Perf(X)$.
In order to make this precise, consider the following definition.

\begin{definition} \label{def:dgformfunctor}
Let $(\mc{A}, \vee, \can)$ and $(\mc{B}, \vee, \can)$ be dg categories with duality.
A \emph{dg form functor $(F, \eta): \mc{A} \ra \mc{B}$} is a functor $F: \mc{A} \ra \mc{B}$ together with a natural transformation $\eta: F \circ \vee \ra \vee \circ F^{\op}$, whose components are called \emph{duality compatibility morphisms}, such that $\eta_A^{\vee}\can_{F(A)} = \eta_{A^{\vee}} F(\can_A)$ for all objects $A$ of $\mc{A}$.
\end{definition}

A dg form functor preserves duality, similar to the way in which exact functors preserve exactness.
They are the dg analogue of \emph{duality preserving functors} between categories with duality.
Specifically, dg form functors preserve \emph{symmetric forms}.

\begin{definition} \label{def:symmetricform}
Let $(\mc{A}, \vee, \can)$ be a dg category with duality.
A \emph{symmetric form} in $\mc{A}$ is a morphism $\phi: A \ra A^{\vee}$ such that the diagram
\begin{equation*}
    \begin{tikzcd}
        A \arrow[r, "\phi"] \arrow[d, swap, "\can"] 
            & A^{\vee} \\
        A^{\vee\vee} \arrow[ur, swap, "\phi^{\vee}"]
    \end{tikzcd}
\end{equation*}
commutes.
\end{definition}

\begin{proposition} \label{proposition:dgformfunctorspreservesymmetricforms}
Let $(\mc{A}, \vee, \can)$ and $(\mc{B}, \vee, \can)$ be dg categories with duality and $(F,\eta): \mc{A} \ra \mc{B}$ a dg form functor.
If $\phi: A \ra A^{\vee}$ is a symmetric form, then $\eta_A F(\phi): F(A) \ra F(A)^{\vee}$ is a symmetric form.
\end{proposition}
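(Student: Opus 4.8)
The plan is to verify directly that $\psi \coloneqq \eta_A F(\phi)$ satisfies the symmetric form condition of Definition \ref{def:symmetricform}, namely that $\psi^{\vee} \can_{F(A)} = \psi$. First I would expand $\psi^{\vee} = (\eta_A F(\phi))^{\vee} = F(\phi)^{\vee} \eta_A^{\vee}$, using that the duality functor $\vee$ on $\mc{B}$ is contravariant and strictly functorial on mapping complexes (so it reverses composition without introducing signs, as composition of morphisms of the appropriate degrees is what is being dualized). Thus the goal becomes
\begin{equation*}
    F(\phi)^{\vee} \, \eta_A^{\vee} \, \can_{F(A)} = \eta_A \, F(\phi).
\end{equation*}

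Next I would apply the defining relation of a dg form functor from Definition \ref{def:dgformfunctor}, which gives $\eta_A^{\vee} \can_{F(A)} = \eta_{A^{\vee}} F(\can_A)$. Substituting this into the left-hand side reduces the claim to
\begin{equation*}
    F(\phi)^{\vee} \, \eta_{A^{\vee}} \, F(\can_A) = \eta_A \, F(\phi).
\end{equation*}
Now I would invoke naturality of $\eta$: for the morphism $\phi: A \to A^{\vee}$ in $\mc{A}$, the naturality square for $\eta: F \circ \vee \Rightarrow \vee \circ F^{\op}$ applied to $\phi$ reads $\eta_{A^{\vee}} \circ F(\phi^{\vee}) = F(\phi)^{\vee} \circ \eta_{A}$, i.e. $F(\phi)^{\vee}\eta_A = \eta_{A^\vee} F(\phi^\vee)$. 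Hence the right-hand side equals $\eta_{A^{\vee}} F(\phi^{\vee})$, and it remains to check $F(\phi)^{\vee} \eta_{A^{\vee}} F(\can_A) = \eta_{A^{\vee}} F(\phi^{\vee})$. Using the naturality identity once more to rewrite $F(\phi)^\vee \eta_{A^\vee}$ — wait, more cleanly: multiply the previous display by noting both sides should be $\eta_{A^\vee} F(\phi^\vee \can_A)$. Since $\phi$ is a symmetric form in $\mc{A}$, Definition \ref{def:symmetricform} gives $\phi^{\vee} \can_A = \phi$, and applying the functor $F$ yields $F(\phi^{\vee})F(\can_A) = F(\phi^{\vee}\can_A) = F(\phi)$. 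Chaining these equalities closes the argument.

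Concretely, the cleanest presentation is a short commuting-diagram chase: assemble the pentagon consisting of (a) the naturality square of $\eta$ at $\phi$, (b) the dg-form-functor compatibility square for $A$, and (c) the image under $F$ of the symmetric-form triangle for $\phi$, and observe that the outer boundary is exactly the symmetric-form triangle for $\psi = \eta_A F(\phi)$ at the object $F(A)$. The main obstacle — really the only subtlety — is bookkeeping the contravariance of $\vee$ on mapping complexes and confirming that no sign $(-1)^{|f||g|}$ from Definition \ref{def:dualityonperf}(ii) obstructs the identity $(\eta_A F(\phi))^{\vee} = F(\phi)^{\vee}\eta_A^{\vee}$; this is fine because $\vee$ as a dg functor is strict on composition (the signs are already built into the definition of $f^{\vee}$), so no extra signs appear when splitting the dual of a composite. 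Once that is checked, the three cited facts — naturality of $\eta$, the form-functor compatibility axiom, and functoriality of $F$ applied to symmetry of $\phi$ — fit together formally with nothing left to compute.
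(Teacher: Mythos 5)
Your proposal is correct in substance and follows exactly the paper's argument: it combines the dg form functor axiom $\eta_A^{\vee}\can_{F(A)} = \eta_{A^{\vee}}F(\can_A)$, the naturality square of $\eta$ at $\phi$, and functoriality of $F$ applied to $\phi^{\vee}\can_A = \phi$; the pentagon you describe at the end is precisely the two-square diagram in the paper's proof. The one slip is in your statement of the naturality square: for $\phi: A \to A^{\vee}$ it reads $F(\phi)^{\vee}\,\eta_{A^{\vee}} = \eta_A\, F(\phi^{\vee})$ (both sides are maps $F(A^{\vee\vee}) \to F(A)^{\vee}$), not $F(\phi)^{\vee}\eta_A = \eta_{A^{\vee}}F(\phi^{\vee})$ as you wrote — with the subscripts swapped neither side typechecks, which is why your intermediate reductions wobble before you recover; with the corrected identity the chain $F(\phi)^{\vee}\eta_A^{\vee}\can_{F(A)} = F(\phi)^{\vee}\eta_{A^{\vee}}F(\can_A) = \eta_A F(\phi^{\vee})F(\can_A) = \eta_A F(\phi)$ closes cleanly.
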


\begin{proof}
The diagram
\begin{equation*}
    \begin{tikzcd}
        F(A) \arrow[r, "F(\can_A)"] \arrow[d, swap, "\can_{F(A)}"]
            & F(A^{\vee\vee}) \arrow[r, "F(\phi^{\vee})"] \arrow[d, "\eta_{A^{\vee}}"]
                & F(A^{\vee}) \arrow[d, "\eta_A"] \\
        F(A)^{\vee\vee} \arrow[r, "\eta_{A}^{\vee}"]
            & F(A^{\vee})^{\vee} \arrow[r, "F(\phi)^{\vee}"]  
                & F(A)^{\vee}
    \end{tikzcd}
\end{equation*}
commutes, because $(F, \eta)$ is a dg form functor.
Furthermore, as $\phi$ is a symmetric form, $F(\phi^{\vee})F(\can_A) = F(\phi^{\vee} \can_A) = F(\phi)$.
Hence
\begin{equation*}
    \left(\eta_A F(\phi)\right)^{\vee} \can_{F(A)} = F(\phi)^{\vee} \eta_A^{\vee} \can_{F(A)} = \eta_A F(\phi),
\end{equation*}
as was to be shown.
\end{proof}

The following proposition provides a useful tool for constructing dg form functors (cf. \cite[remark 1.32]{schlichting17}).
Indeed, most of the form functors considered in the rest of this paper will be of this type.

\begin{proposition} \label{proposition:tensoringbysymmetricformisformfunctor}
Let $\phi: M \rightarrow [M, L_1]$ be a symmetric form in $\Perf(X)^{[L_1]}$.
Then tensoring by $M$ defines a dg form functor
\begin{equation*}
    (M, \phi)\otimes - : \Perf(X)^{[L_2]} \longrightarrow \Perf(X)^{[L_1L_2]}
\end{equation*}
with duality compatibility morphisms
\begin{equation*}
    \eta_{N}: M \otimes [N, L_2] \longrightarrow [M, L_1] \otimes [N, L_2] \longrightarrow [M \otimes N, L_1L_2],
\end{equation*}
where the first arrow is $\phi \otimes \id$ and the second arrow is the natural isomorphism of corollary \ref{corollary:shifteddualitynaturalisomorphism}.
\end{proposition}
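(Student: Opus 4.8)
The plan is to verify the two defining conditions of a dg form functor (Definition \ref{def:dgformfunctor}) for the pair $(M \otimes -, \eta)$: that $\eta$ is a natural transformation $F \circ \vee_{L_2} \Rightarrow \vee_{L_1 L_2} \circ F^{\op}$, and that the compatibility identity $\eta_N^{\vee} \circ \can^{L_1 L_2}_{M \otimes N} = \eta_{N^{\vee}} \circ F(\can^{L_2}_N)$ holds for every $N \in \Perf(X)$. Naturality of $\eta_N$ is immediate: $\phi \otimes \id$ is natural in $N$ (it is $\phi$ tensored with the identity on a variable complex), and the second map is the natural isomorphism of Corollary \ref{corollary:shifteddualitynaturalisomorphism}; a composite of natural transformations is natural. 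So the real content is the compatibility identity.

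For that identity I would argue by a diagram chase, splitting the large square into two pieces along the factorization $\eta_N = (\text{Cor.\ \ref*{corollary:shifteddualitynaturalisomorphism}}) \circ (\phi \otimes \id)$. The first piece uses only that $\phi$ is a symmetric form in $\Perf(X)^{[L_1]}$, i.e. $\phi^{\vee_{L_1}} \circ \can^{L_1}_M = \phi$: tensoring this equation with $\id_{[N,L_2]}$ (and with the identity on $N$ where appropriate) produces the commutative square relating $\can^{L_1}_M \otimes \can^{L_2}_N$ to $\phi \otimes \id$. The second piece is a purely formal statement about the natural isomorphism $\psi := \psi_{M,N}\colon [M,L_1]\otimes[N,L_2] \xrightarrow{\ \sim\ } [M\otimes N, L_1 L_2]$ of Corollary \ref{corollary:shifteddualitynaturalisomorphism}, namely that $\psi$ intertwines the double-dual identifications, i.e.
\begin{equation*}
    \psi^{\vee_{L_1 L_2}}_{M^{\vee}, N^{\vee}} \circ \can^{L_1 L_2}_{M \otimes N} = \bigl(\psi_{M,N}\circ(\can^{L_1}_M \otimes \can^{L_2}_N)\bigr)
\end{equation*}
up to the evident identifications $[M^{\vee},L_1]\otimes[N^{\vee},L_2]\cong (M\otimes N)^{\vee\vee}$ supplied by $\psi$ itself. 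This is checked by evaluating both sides on simple tensors $x \otimes y$ and unwinding the sign conventions of Definition \ref{def:dualityonperf}: the signs $(-1)^{|x||g|}$ in $\psi$, the signs $(-1)^{|x||f|}$ in $\can$, and the Koszul signs in $f^{\vee}$ must cancel to give agreement. Concatenating the two commuting pieces yields exactly $\eta_N^{\vee}\can^{L_1 L_2}_{M\otimes N} = \eta_{N^{\vee}}F(\can^{L_2}_N)$.

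The main obstacle is bookkeeping of signs: every map in sight — the duality on morphisms, the canonical double-dual map, the twist $\tau$ in Remark \ref{remark:naturalisomorphismtensorproductofshiftedlinebundles}, and $\psi$ — carries a Koszul sign depending on degrees, and the shifts $L_1 = \mc{L}_1[m]$, $L_2 = \mc{L}_2[n]$ inject further signs of the form $(-1)^{i(n+1)}$ via \eqref{equation:dualityidentities}. The safest route is to reduce to the line-bundle case using the identities \eqref{equation:dualityidentities} to strip off the shifts, then do the unsigned computation on $\Vect(X)$ where $\psi$ is literally $s\otimes t \mapsto$ "tensor the values" and $\can$ is evaluation, and finally reintroduce the shift signs and check they match on both sides of the compatibility equation. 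An alternative, if one wishes to avoid coordinates entirely, is to observe that $\psi$ is assembled in Corollary \ref{corollary:shifteddualitynaturalisomorphism} from the associator, the twist $\tau$, the units $\nabla$ of tensor-hom adjunctions, and the isomorphism of Remark \ref{remark:naturalisomorphismtensorproductofshiftedlinebundles}, each of which is a morphism of dg categories with duality in the appropriate sense, so that the compatibility with $\can$ follows by composing the compatibilities of the constituent pieces; this is cleaner conceptually but requires first isolating those lemmas, so for a self-contained argument the explicit sign check is the more direct option.
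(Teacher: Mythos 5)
Your proposal is correct and follows essentially the same route as the paper, which likewise reduces the claim to the commutativity of the compatibility square $\eta_N^{\vee}\can^{L_1L_2}_{M\otimes N} = \eta_{[N,L_2]}(1\otimes\can^{L_2}_N)$ and then leaves the sign computation as an exercise. Your factorization of that square into the symmetry of $\phi$ (via naturality of the isomorphism of corollary \ref{corollary:shifteddualitynaturalisomorphism} in the first variable) plus the monoidality of the double-dual identification is a sound and somewhat more explicit organization of the same verification.
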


\begin{proof}
The assignment $N \mapsto M \otimes N$ certainly defines a functor, which leaves to be shown that it defines a form functor with the provided compatibility morphisms, or equivalently, that the square
\begin{equation} \label{diagram:naturalitysquaredualitycompatibilitytensorsymmetricform}
    \begin{tikzcd}[row sep=large]
        M \otimes N \arrow[r, "\can^{L_1L_2}"] \arrow[d, "1 \otimes \can^{L_2}"]
            & \left[\left[M \otimes N, L_1L_2\right], L_1L_2\right] \arrow[d, "\eta_N^{\vee_{L_1L_2}}"] \\
        M \otimes \left[\left[N, L_2\right],L_2\right] \arrow[r, "\eta_{[N,L_2]}"]
            & \left[M \otimes \left[N, L_2\right], L_1L_2\right]
    \end{tikzcd}
\end{equation}
commutes for all $N$ in $\Perf(X)^{[L_2]}$; this amounts to a computation using the definitions of all the morphisms, and is left as an exercise.
\end{proof}

\begin{remark} \label{remark:symmetricbilinearformstosymmetricforms}
Symmetric bilinear forms correspond to symmetric forms via the tensor-hom adjunction.
Given a symmetric bilinear form $\phi: M \otimes M \ra N$, the corresponding symmetric form $\tilde{\phi}: M \ra [M,N]$ is the composition
\begin{equation*}
    M \xrightarrow{\nabla} [M, M \otimes M] \xrightarrow{[1,\phi]} [M,N],
\end{equation*}
so that $\tilde{\phi}(x) = (y \mapsto \phi(x \otimes y))$. 
\end{remark}

One consequence of proposition \ref{proposition:tensoringbysymmetricformisformfunctor} is that skew-symmetric forms can be transformed into symmetric forms.

\begin{remark} \label{remark:skewsymmetrictosymmetric}
For the purpose of this remark, let $L = \mc{L}[m]$, where $\mc{L}$ is a line bundle on $X$ and $m \in \Z$, and let $\epsilon = \pm 1$ and $i \in \Z$.
Consider a symmetric bilinear form $\phi: M \otimes M \rightarrow L$ in $(\Perf(X), \vee_L, \epsilon \can^L)$. 
Then $\phi \tau = \epsilon \phi$. 
The multiplication map $\mu: \OO[i] \otimes \OO[i] \rightarrow \OO[2i]$ given by $x \otimes y \mapsto xy$ satisfies $\mu \tau = (-1)^i\mu$, as witnessed by the identity 
\begin{equation*}
    \mu(\tau(x \otimes y)) = (-1)^{i^2}yx = (-1)^i xy = (-1)^i\mu(x \otimes y).    
\end{equation*}
Thus $\mu\tau \otimes \phi\tau = (-1)^i\epsilon(\mu \otimes \phi)$. 
Since the diagram
\begin{equation*}
    \begin{tikzcd}
        {\OO[1] \otimes M \otimes \OO[1] \otimes M} \arrow[r, "1 \otimes \tau \otimes 1"] \arrow[d, "\tau"]
            & \OO[1] \otimes \OO[1] \otimes M \otimes M \arrow[d, "\tau \otimes \tau"] \\
        \OO[1] \otimes M \otimes \OO[1] \otimes M \arrow[r, "1 \otimes \tau \otimes 1"]
            & \OO[1] \otimes \OO[1] \otimes M \otimes M
    \end{tikzcd}
\end{equation*}
commutes, as can be seen from a direct computation, it follows that the composition
\begin{equation*}
    \begin{tikzcd}
        \psi:
            &[-3em] \OO[1] \otimes M \otimes \OO[1] \otimes M \arrow[r,"1 \otimes \tau \otimes 1"]
                & \OO[1] \otimes \OO[1] \otimes M \otimes M \arrow[r, "\mu \otimes \phi"]
                    & \OO[2] \otimes L
    \end{tikzcd}
\end{equation*}
satisfies $\psi \tau = (-1)^i\epsilon\psi$. 
This yields an equivalence of dg categories with duality
\begin{equation*}
    (\OO[i],\mu)\otimes - : (\Perf(X), \vee_L, \epsilon\can^L) \longrightarrow (\Perf(X),  \vee_{L[2i]}, (-1)^i\epsilon\can^{L[2i]}),
\end{equation*}
which in particular shows how to turn skew-symmetric forms into symmetric ones by taking $\epsilon = -1$ and $i = \pm 1$. 
\end{remark}

Another consequence of proposition \ref{proposition:tensoringbysymmetricformisformfunctor} is that the tensor product of two symmetric bilinear forms is another symmetric bilinear form.

\begin{corollary} \label{corollary:tensorproductofsymmetricformsissymmetric}
Let $\phi: M \otimes M \rightarrow L_1$ and $\psi: N \otimes N \rightarrow L_2$ be symmetric bilinear forms in $\Perf(X)^{[L_1]}$ and $\Perf(X)^{[L_2]}$, respectively.
Then the composition
\begin{equation*}
    \begin{tikzcd}
        M \otimes N \otimes M \otimes N \arrow[r, "1 \otimes \tau \otimes 1"]
            & M \otimes M \otimes N \otimes N \arrow[r, "\phi \otimes \psi"]
                & L_1L_2
    \end{tikzcd}
\end{equation*}
is a symmetric bilinear form in $\Perf(X)^{[L_1L_2]}$. 
\end{corollary}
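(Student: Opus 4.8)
The plan is to realize the stated composition as the duality compatibility morphism of a dg form functor obtained from Proposition \ref{proposition:tensoringbysymmetricformisformfunctor}, and then invoke Proposition \ref{proposition:dgformfunctorspreservesymmetricforms}. First I would pass from the bilinear forms to the associated symmetric forms via the tensor-hom adjunction as in Remark \ref{remark:symmetricbilinearformstosymmetricforms}: write $\tilde\phi: M \to [M, L_1]$ and $\tilde\psi: N \to [N, L_2]$ for the symmetric forms corresponding to $\phi$ and $\psi$. These are genuine symmetric forms in $\Perf(X)^{[L_1]}$ and $\Perf(X)^{[L_2]}$ respectively, by the stated correspondence.

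Next I would apply Proposition \ref{proposition:tensoringbysymmetricformisformfunctor} to $\tilde\phi$, obtaining a dg form functor
\begin{equation*}
    (M, \tilde\phi) \otimes - : \Perf(X)^{[L_2]} \longrightarrow \Perf(X)^{[L_1L_2]}
\end{equation*}
with duality compatibility morphisms $\eta_{(-)}$ built from $\tilde\phi \otimes \id$ followed by the natural isomorphism of Corollary \ref{corollary:shifteddualitynaturalisomorphism}. Since $\tilde\psi: N \to [N,L_2]$ is a symmetric form in the source category, Proposition \ref{proposition:dgformfunctorspreservesymmetricforms} tells us that $\eta_N \circ (\id_M \otimes \tilde\psi): M \otimes N \to [M \otimes N, L_1L_2]$ is a symmetric form in $\Perf(X)^{[L_1L_2]}$. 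It then remains to identify this symmetric form, under the tensor-hom adjunction, with the bilinear form in the statement.

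The main obstacle — really the only nontrivial point — is this last identification: unwinding $\eta_N \circ (\id_M \otimes \tilde\psi)$ through the definitions of $\tilde\phi$, $\tilde\psi$, the formula $\phi_{\mathrm{cor}}(f \otimes g)(x \otimes y) = (-1)^{|x||g|}(f(x) \otimes g(y))$ from Corollary \ref{corollary:shifteddualitynaturalisomorphism}, and the adjunction unit $\nabla$, and checking that the resulting bilinear map $M \otimes N \otimes M \otimes N \to L_1L_2$ sends $x \otimes y \otimes x' \otimes y'$ to $(-1)^{|x'||y|}\,\phi(x \otimes x') \otimes \psi(y \otimes y')$, i.e. agrees with $(\phi \otimes \psi) \circ (1 \otimes \tau \otimes 1)$. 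The sign $(-1)^{|x'||y|}$ is exactly the Koszul sign produced by the twist map $\tau: N \otimes M \to M \otimes N$ appearing in $1 \otimes \tau \otimes 1$, and it matches the sign in the Corollary \ref{corollary:shifteddualitynaturalisomorphism} formula, so the computation goes through. This being a routine (if sign-sensitive) diagram chase of the same flavour as the exercise left in the proof of Proposition \ref{proposition:tensoringbysymmetricformisformfunctor}, I would present it as such and omit the full bookkeeping.
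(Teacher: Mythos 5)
Your proposal is correct and follows exactly the paper's own argument: convert the bilinear forms to symmetric forms $\tilde\phi$, $\tilde\psi$, apply Proposition \ref{proposition:tensoringbysymmetricformisformfunctor} to get the form functor $(M,\tilde\phi)\otimes -$, invoke Proposition \ref{proposition:dgformfunctorspreservesymmetricforms} on $\tilde\psi$, and identify the resulting symmetric form with the stated bilinear form. The paper simply asserts this last identification without the sign bookkeeping you sketch, so your version is if anything slightly more complete.
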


\begin{proof}
The bilinear forms $\phi$ and $\psi$ define symmetric forms $\tilde{\phi}: M \ra [M,L_1]$ and $\tilde{\psi}: N \ra [N,L_2]$. 
Let $F = (M, \tilde{\phi}) \otimes -$ be the form functor of proposition \ref{proposition:tensoringbysymmetricformisformfunctor} with duality compatibility morphisms $\eta_A$.
Then $\eta_N F(\tilde{\psi})$ is a symmetric form by proposition \ref{proposition:dgformfunctorspreservesymmetricforms}, and the symmetric bilinear form it induces is precisely that of the statement of the lemma.
\end{proof}

\begin{corollary} \label{corollary:equivalencesquareoflinebundle}
Let $L = \mc{L}[m]$, where $\mc{L}$ is a line bundle on $X$ and $m \in \Z$, equipped with the trivial symmetric form $\mu: L \otimes L \ra L^{\otimes 2}$
Then tensoring by $L$ induces an equivalence
\begin{equation*}
    (L, \mu) \otimes -: \Perf(X)^{[0]} \longrightarrow \Perf(X)^{[L^{\otimes 2}]}
\end{equation*}
of pretriangulated dg categories with duality.
\end{corollary}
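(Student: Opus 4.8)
The plan is to exhibit $(L,\mu)\otimes-$ as a special case of the dg form functor of Proposition~\ref{proposition:tensoringbysymmetricformisformfunctor} and then to promote it to an equivalence of dg categories with duality. Taking $L_1 = L^{\otimes 2}$ and $L_2 = \OO$ in that proposition, the bilinear form $\mu: L \otimes L \to L^{\otimes 2}$ corresponds via Remark~\ref{remark:symmetricbilinearformstosymmetricforms} to a morphism $\tilde\mu: L \to [L,L^{\otimes 2}]$, and I would first confirm that $\tilde\mu$ is a symmetric form in $\Perf(X)^{[L^{\otimes 2}]}$. Unwinding Definition~\ref{def:symmetricform} and Remark~\ref{remark:symmetricbilinearformstosymmetricforms}, this amounts to the identity $\mu \circ \tau_{L,L} = \mu$, a Koszul-sign computation whose only non-formal ingredient is that the interchange map $\mc L \otimes \mc L \to \mc L \otimes \mc L$ is the identity because $\mc L$ is of rank one. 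Proposition~\ref{proposition:tensoringbysymmetricformisformfunctor} then hands us a dg form functor $(L,\tilde\mu)\otimes-: \Perf(X)^{[0]} \to \Perf(X)^{[L^{\otimes 2}]}$ with underlying functor $N \mapsto L \otimes N$ and duality compatibility morphisms $\eta_N: L \otimes [N,\OO] \xrightarrow{\tilde\mu \otimes \id} [L,L^{\otimes 2}] \otimes [N,\OO] \to [L \otimes N, L^{\otimes 2}]$, the last arrow being the isomorphism of Corollary~\ref{corollary:shifteddualitynaturalisomorphism}.

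Next I would check that this form functor is an equivalence of pretriangulated dg categories with duality; for this it suffices to check that the underlying functor is an equivalence of pretriangulated dg categories and that every duality compatibility morphism is an isomorphism, since a quasi-inverse is then furnished by $(\mc L^{-1}[-m], \tilde\mu^{-1})\otimes-$ together with the inverses of the $\eta_N$. The underlying functor $L \otimes -$ is an autoequivalence of $\Perf(X)$ as a pretriangulated dg category because $L = \mc L[m]$ is $\otimes$-invertible, with inverse $\mc L^{-1}[-m]$, and tensoring with an invertible object commutes with shifts and mapping cones. Each $\eta_N$ is an isomorphism because its second factor is one by Corollary~\ref{corollary:shifteddualitynaturalisomorphism}, while $\tilde\mu: L \to [L,L^{\otimes 2}]$ is an isomorphism — again by invertibility of $L$, as locally it carries the degree $-m$ generator $1_{-m}$ to the generator of $[L,L^{\otimes 2}]$ taking $1_{-m}$ to $1_{-2m}$ — so $\tilde\mu \otimes \id$, and hence $\eta_N$, are isomorphisms. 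This gives the corollary.

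The one step deserving care is the symmetry of $\mu$: this is exactly where the rank-one hypothesis on $\mc L$ enters, and where one must track the sign contributed by the shift by $m$. Everything else is a formal consequence of the invertibility of $L$ together with the natural isomorphisms assembled in Corollaries~\ref{corollary:tensorcommuteswithhominsecondvariableonperf} and~\ref{corollary:shifteddualitynaturalisomorphism}.
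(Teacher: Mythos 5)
Your proof is essentially the paper's: both reduce to Proposition \ref{proposition:tensoringbysymmetricformisformfunctor}, observe that the duality compatibility morphisms are isomorphisms (via Corollary \ref{corollary:shifteddualitynaturalisomorphism} and the invertibility of $\tilde\mu$), and note that $L \otimes -$ is an equivalence of pretriangulated dg categories with quasi-inverse $[L,\OO]\otimes - \cong \mc{L}^{-1}[-m]\otimes -$. One small caution: your asserted identity $\mu\tau_{L,L}=\mu$ actually reads $\mu\tau = (-1)^{m}\mu$ by the sign computation of Remark \ref{remark:skewsymmetrictosymmetric}, so for odd $m$ the form is symmetric only for the skew-duality; the paper's own proof is equally silent on this point, and the corollary is invoked in the sequel only with unshifted line bundles, i.e.\ with $m=0$.
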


\begin{proof}
It is immediate from proposition \ref{proposition:tensoringbysymmetricformisformfunctor} that tensoring by $L$ gives a dg form functor
\begin{equation*}
    F: \Perf(X)^{[0]} \longrightarrow \Perf(X)^{[L^{\otimes 2}]}
\end{equation*}
whose duality compatibility morphisms are isomorphisms.
Furthermore, tensoring by $L$ is an equivalence; an inverse is given by tensoring with $[L, \OO]$. 
It follows that $F$ is an equivalence of pretriangulated dg categories with duality.
\end{proof}

Thus concludes this demonstration of the general yoga of dualities on $\Perf(X)$, some of the main postures being:
\begin{enumerate}[label=(\roman*)]
    \item the identities (\ref{equation:dualityidentities}) relating duality on $\Perf(X)$ to duality on $\Vect(X)$;
    \item remark \ref{remark:skewsymmetrictosymmetric}, transmutating skew-symmetric forms into symmetric ones by passing to a different duality on $\Perf(X)$;
    \item corollary \ref{corollary:tensorproductofsymmetricformsissymmetric}, stating that the product of symmetric forms is a symmetric form; and
    \item corollary \ref{corollary:equivalencesquareoflinebundle}, shifting a duality on $\Perf(X)$ by the square $L^{\otimes 2}$ of an invertible complex $L$ without essentially changing it.
\end{enumerate}

\subsection{Constructing symmetric forms from Koszul complexes}
    \label{subsection:constructingsymmetricformsfromkoszulcomplexes}

This section applies the results from the previous section to the specific case of projective bundles, and lays the groundwork for the proof of the projective bundle formula.    

Let $X$ be a scheme satisfying the resolution property and let $\mc{E}$ be a finite locally free $\OO_X$-module of rank $r+1$.
Let $\P = \P(\mc{E})$ be the projective bundle over $X$ associated to $\mc{E}$, with projection map $\pi: \P \rightarrow X$.
Set $s = \lceil r/2 \rceil$ and $\OO = \OO_{\P}$ for ease of notation.
By the construction of the projective bundle, there is a canonical surjection $\pi^*\mc{E} \twoheadrightarrow \OO(1)$, giving rise to the Koszul complex $K$
\begin{equation*}
    K: \quad 
        0 \rightarrow
        \ldots \rightarrow
        \Lambda^{i}\pi^*\mc{E}(-i) \rightarrow
        \ldots \rightarrow
        \Lambda^1 \pi^*\mc{E}(-1) \rightarrow
        \OO \rightarrow
        0
\end{equation*}
with $K_{-i} = \Lambda^i \pi^*\mc{E} \otimes \OO(-i)$ in cohomological degree $-i$, which is acyclic by \cite[section 4.6]{thomason90}.
It also holds that $\Lambda^{r+1}\pi^*\mc{E} \cong \pi^*\det \mc{E}$ and $\Lambda^1 \pi^*\mc{E} = \pi^*\mc{E}$. 
View $K$ as a differential graded algebra with $\Lambda^1\pi^*\mc{E}(-1)$ in degree $-1$, so that the cohomological degree and the degree of the grading coincide, and write $|x|$ for the degree of a homogeneous element $x \in K$. 
For ease of notation, fix $\Delta = \det \pi^*\mc{E}(-r-1)$. 

The rest of this section is dedicated to the construction of a symmetric form $(H,\psi)$ such that the cone of $\psi$ is the Koszul complex, which can be seen as a generalization of \cite[section 4]{balmer05koszul}.
Two cases are distinguished, $r$ is even and $r$ is odd, because the construction in the first case is easier than that in the second case.
Such a symmetric form $(H, \psi)$ is a quasi-isomorphism in $\Perf(\P)$ because the Koszul complex is acyclic, and therefore defines an element of the Grothendieck-Witt group $\GW^{[r]}_0(\P, \Delta)$, which is a key ingredient in the proof of theorem \ref{theorem:projectivebundleformulaforgwwithtwistofdifferentparity}.

The following proposition is an adaptation of the well-known and useful fact that the Koszul complex is self-dual.

\begin{proposition} \label{proposition:koszulisselfdual}
The bilinear form $\mu: K \otimes K \rightarrow \Delta[r+1]$ given by the composition
\begin{equation*}
    K \otimes K \xrightarrow{\wedge} K \xrightarrow{\pr} \Delta[r+1],
\end{equation*}
where the last map is the projection map, is symmetric and non-degenerate.
\end{proposition}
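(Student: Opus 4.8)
The plan is to verify the two properties separately, reducing symmetry of $\mu$ to the graded-commutativity of the exterior algebra and non-degeneracy to the classical perfectness of the wedge pairing on exterior powers.

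First I would record that $\mu = \pr \circ \wedge$ is a chain map of degree $0$: the multiplication $\wedge \colon K \otimes K \to K$ is one because $K$ is a differential graded algebra, and $\pr \colon K \to \Delta[r+1]$ is the projection onto the top-degree term $K_{-r-1} = \Lambda^{r+1}\pi^*\mc{E}(-r-1) \cong \Delta$. By remark \ref{remark:symmetricbilinearformstosymmetricforms}, $\mu$ corresponds to the morphism $\tilde\mu \colon K \to [K, \Delta[r+1]]$ with $\tilde\mu(x) = (y \mapsto \mu(x \otimes y))$, and by remark \ref{remark:skewsymmetrictosymmetric} (applied with $\epsilon = 1$) the form $\mu$ is symmetric exactly when $\mu\tau = \mu$, where $\tau \colon K \otimes K \to K \otimes K$ is the graded twist $\tau(x \otimes y) = (-1)^{|x||y|} y \otimes x$. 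Since the grading on $K$ was fixed so that the cohomological degree coincides with the exterior-algebra degree, a homogeneous $x \in \Lambda^a\pi^*\mc{E}(-a)$ has $|x| = -a$, so $|x||y| = ab$ for $y \in \Lambda^b\pi^*\mc{E}(-b)$; graded-commutativity then gives $x \wedge y = (-1)^{ab} y \wedge x = (-1)^{|x||y|} y \wedge x$, whence $\wedge \circ \tau = \wedge$ and therefore $\mu\tau = \pr \circ \wedge \circ \tau = \mu$.

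For non-degeneracy I would prove the stronger statement that $\tilde\mu$ is an isomorphism of complexes, which makes it in particular a quasi-isomorphism. Using the identities (\ref{equation:dualityidentities}) to unwind the shift, the term of $[K, \Delta[r+1]]$ in cohomological degree $-j$ is $\sheafhom(K_{-(r+1-j)}, \Delta) = \sheafhom\big(\Lambda^{r+1-j}\pi^*\mc{E}(-(r+1-j)),\, \det\pi^*\mc{E}(-r-1)\big)$, and in that degree $\tilde\mu$ is, up to a sign, the map $\Lambda^{j}\pi^*\mc{E}(-j) \to \sheafhom\big(\Lambda^{r+1-j}\pi^*\mc{E}(-(r+1-j)),\, \det\pi^*\mc{E}(-r-1)\big)$ adjoint to the wedge product $\Lambda^{j}\pi^*\mc{E} \otimes \Lambda^{r+1-j}\pi^*\mc{E} \to \Lambda^{r+1}\pi^*\mc{E} = \pi^*\det\mc{E}$, tensored with the line bundle $\OO(-r-1)$. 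Being an isomorphism is local on $\P$, and there $\pi^*\mc{E}$ is free of rank $r+1$, so this becomes the classical fact that $\Lambda^{j}V \otimes \Lambda^{r+1-j}V \to \Lambda^{r+1}V$ is a perfect pairing for a free module $V$ of rank $r+1$.

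The main obstacle is sign bookkeeping: one must keep the cohomological grading, the exterior-algebra grading and the shift in $\Delta[r+1]$ aligned, and confirm that the Koszul signs coming from $\tau$ and from the duality $\vee_{\Delta[r+1]}$ cancel exactly as asserted. Once the conventions of definition \ref{def:dualityonperf} and the identities (\ref{equation:dualityidentities}) are in place, both verifications are short.
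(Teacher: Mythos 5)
Your proof is correct and follows essentially the same route as the paper: symmetry via the graded commutativity of $K$ and the sign in the twist map $\tau$, and non-degeneracy by observing that the components of $\tilde{\mu}$ are locally the perfect wedge pairings $\Lambda^{j}V \otimes \Lambda^{r+1-j}V \to \Lambda^{r+1}V$ on a free module, hence isomorphisms. You simply spell out the degree and sign bookkeeping that the paper leaves as a direct computation.
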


\begin{proof}
Symmetry can be checked by a direct computation involving the graded commutativity of $K$ and the sign change on the twist map $\tau: K \otimes K \rightarrow K \otimes K$.

Non-degeneracy holds because the components of the induced symmetric form $\tilde{\mu}: K \ra [K, \Delta[r+1]]$ are locally isomorphisms of free $\OO$-modules and therefore isomorphisms. 
\end{proof}

Fix an integer $\ell$ such that $-r-1 \leq \ell \leq -1$ and let $M = K_{\leq \ell}$ be the naive truncation of $K$:
\begin{equation*}
    \begin{tikzcd}[column sep=small]
        M \arrow[d, "\iota"]:
            & K_{-r-1} \arrow[r, "d"] \arrow[d, equal]
                & K_{-r} \arrow[r, "d"] \arrow[d, equal]
                    & \dots \arrow[r, "d"]
                        & K_{\ell} \arrow[r] \arrow[d, equal]
                            & 0 \arrow[r] \arrow[d]
                                & \dots \arrow[r]
                                    & 0 \arrow[d] \\
        K:
            & K_{-r-1} \arrow[r, "d"]
                & K_{-r} \arrow[r, "d"]
                    & \dots \arrow[r, "d"]
                        & K_{\ell} \arrow[r, "d"]
                            & K_{\ell+1} \arrow[r, "d"]
                                & \dots \arrow[r, "d"]
                                    & K_0.
    \end{tikzcd}
\end{equation*}
Since $K$ is a differential graded $\OO$-algebra, there is a multiplication map $\wedge: K \otimes K \rightarrow K$ given by the wedge product.
Let $\varphi$ be the composition
\begin{equation*}
    \begin{tikzcd}
        M \otimes M \arrow[r, "d \iota \otimes \iota"]
            & K \otimes K \arrow[r, "\mu"]
                & \Delta[r+2],
    \end{tikzcd}
\end{equation*}
where $\mu$ is the symmetric bilinear form of proposition \ref{proposition:koszulisselfdual}.
In a formula, 
\begin{equation*}
\varphi(x \otimes y) = 
\left\{
\begin{array}{ll}
     d(x) \wedge y & \tn{if }|x| + |y| = -r-2 \\
     0 & \tn{otherwise.} 
\end{array}    
\right.
\end{equation*}
Now $\varphi$ is a symmetric bilinear form, which will be molded in such a way that its cone becomes the Koszul complex.

\begin{proposition} \label{proposition:symmetricformkoszulcomplexprojectivebundle}
The map
\begin{equation*}
    \varphi: M \otimes M \longrightarrow \Delta[r+2]
\end{equation*}
defines a symmetric form $\phi: M[-1] \ra [M[-1], \Delta[r]]$ in $\Perf(\P)^{[\Delta[r]]}$ given by $x \mapsto (y \mapsto (-1)^{|x|}\varphi(x \otimes y))$. 
\end{proposition}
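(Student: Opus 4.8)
The plan is to realise $\phi$ as the image of $\varphi$ under one of the equivalences of dg categories with duality produced by remark~\ref{remark:skewsymmetrictosymmetric}, so that the content of the proposition reduces to a single (anti)symmetry identity for the wedge product on the Koszul complex, plus sign bookkeeping. Concretely, I would show (i) that $\varphi$ is a morphism of $\Perf(\P)$, and (ii) that $\varphi$ is \emph{skew}-symmetric for the twist, and then transport it into $\Perf(\P)^{[\Delta[r]]}$.

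First I would verify that $\varphi\colon M\otimes M\to\Delta[r+2]$ is a morphism of $\Perf(\P)$, i.e.\ a degree-$0$ cycle in the mapping complex. Since $\varphi=\mu\circ(d\iota\otimes\iota)$, this is formal: $\mu$ is a morphism by proposition~\ref{proposition:koszulisselfdual}, $\iota$ is a morphism, and $d\colon K\to K$ is a degree-$1$ cycle in $[K,K]$ because $d^{2}=0$, while composites and tensor products of cycles are cycles. Concretely the only identity to check is $\varphi\circ d_{M\otimes M}=0$, which reduces to $d(x)\wedge d(y)=0$ for homogeneous $x,y\in M$ with $|x|+|y|=-r-3$ (here $d$ denotes the Koszul differential); this holds because $d(x)\wedge d(y)$ equals, up to a sign, $d\bigl(d(x)\wedge y\bigr)$ while $d(x)\wedge y$ lies in $K_{-r-2}=0$. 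Hence $\varphi$ underlies a bilinear form on $M$.

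The key step is the claim $\varphi\circ\tau=-\varphi$, so that $\varphi$ is a symmetric bilinear form in the skew-duality $(\Perf(\P),\vee_{\Delta[r+2]},-\can^{\Delta[r+2]})$. For homogeneous $x,y\in M$ with $|x|+|y|=-r-2$ one has $x\wedge y\in K_{-r-2}=0$, so the Leibniz rule for the odd derivation $d$ gives $d(x)\wedge y=(-1)^{|x|+1}x\wedge d(y)$, and the graded-commutativity of $K$ gives $x\wedge d(y)=(-1)^{|x|(|y|+1)}d(y)\wedge x$. Combining these and simplifying the exponent yields $\varphi(x\otimes y)=(-1)^{|x||y|+1}\varphi(y\otimes x)=-(\varphi\circ\tau)(x\otimes y)$. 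This is the only place where both the extra differential in the definition of $\varphi$ and the boundedness of $K$ at $K_{-r-1}$ enter, and I expect it to be the main conceptual point of the proof.

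Finally I would invoke remark~\ref{remark:skewsymmetrictosymmetric} with $L=\Delta[r+2]$, $\epsilon=-1$ and $i=-1$: since $(-1)^{i}\epsilon=1$ and $\Delta[r+2][2i]=\Delta[r]$, tensoring by $(\OO[-1],\mu)$ is an equivalence of dg categories with duality $(\Perf(\P),\vee_{\Delta[r+2]},-\can^{\Delta[r+2]})\to\Perf(\P)^{[\Delta[r]]}$. Being an equivalence of dg categories with duality it is in particular a dg form functor, so by proposition~\ref{proposition:dgformfunctorspreservesymmetricforms} it carries the symmetric bilinear form $\varphi$ on $M$ to a symmetric form on $\OO[-1]\otimes M\cong M[-1]$ in $\Perf(\P)^{[\Delta[r]]}$. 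It then remains to identify this form with the one in the statement, by unwinding the tensor--hom adjunction of remark~\ref{remark:symmetricbilinearformstosymmetricforms} together with the explicit composition defining the transported form in remark~\ref{remark:skewsymmetrictosymmetric}; there the twist $M\otimes\OO[-1]\to\OO[-1]\otimes M$ evaluated against the degree-$1$ unit of $\OO[-1]$ contributes precisely the factor $(-1)^{|x|}$, giving $\phi(x)=\bigl(y\mapsto(-1)^{|x|}\varphi(x\otimes y)\bigr)$. The main obstacle is this last identification: it is purely a matter of tracking Koszul signs through the passage from $M$ to $M[-1]$, but it must be done carefully. Alternatively one can bypass the machinery and check directly that $\psi(x\otimes y):=(-1)^{|x|}\varphi(x\otimes y)$ is a chain map $M[-1]\otimes M[-1]\to\Delta[r]$ satisfying $\psi\circ\tau=\psi$, the passage from $M$ to $M[-1]$ being exactly what turns the sign in $\varphi\circ\tau=-\varphi$ into a plus.
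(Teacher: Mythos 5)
Your proposal is correct and follows essentially the same route as the paper: establish $\varphi\tau=-\varphi$ via the graded Leibniz rule applied to $x\wedge y\in K_{-r-2}=0$ together with graded commutativity, then apply remark~\ref{remark:skewsymmetrictosymmetric} (tensoring with the skew-symmetric form on $\OO[-1]$) to convert the skew-symmetric form on $M$ into the symmetric form on $M[-1]$ in $\Perf(\P)^{[\Delta[r]]}$. Your additional checks — that $\varphi$ is a degree-$0$ cycle and the explicit sign-tracking identifying the transported form with $x\mapsto(y\mapsto(-1)^{|x|}\varphi(x\otimes y))$ — are details the paper leaves implicit, and they are carried out correctly.
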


\begin{proof}
Let $\tau: M \otimes M \rightarrow M \otimes M$ be the switch map $x \otimes y \mapsto (-1)^{|x||y|}y \otimes x$.
By \cite[remark 1.31]{schlichting17}, $\varphi$ defines a skew-symmetric form $\phi: M \rightarrow [M, \Delta[r+2]]$ if $\varphi$ satisfies $\varphi = -\varphi \tau$, which is what will be shown.
By construction, 
\begin{equation*}
    \varphi(\tau(x \otimes y)) = 0 = -\varphi(x \otimes y)
\end{equation*}
if $|x| + |y| \neq -r-2$.
Therefore, let $x \otimes y \in M \otimes M$ with $|x| + |y| = -r-2$.
Then a direct computation involving the graded Leibniz rule shows that
\begin{equation*}
    \varphi(\tau(x \otimes y)) = (-1)^{|y| + 1}(-1)^{|y|} d(x) \wedge y = -d(x) \wedge y = -\varphi(x \otimes y).
\end{equation*}
By remark \ref{remark:skewsymmetrictosymmetric}, tensoring $\varphi$ with the skew-symmetric form $\OO[-1] \otimes \OO[-1] \rightarrow \OO[-2]$ yields the desired symmetric form $\phi$. 
\end{proof}

For the remainder of this section, let $\phi$ be the symmetric form of proposition \ref{proposition:symmetricformkoszulcomplexprojectivebundle}.
Note that $\phi$ is not necessarily a quasi-isomorphism in $\Perf(\P)$ and therefore does not necessarily define an element of $\GW^{[r]}(\P, \Delta)$. 
However, the ideas of \cite[section 4]{balmer05koszul} can be combined with the technique of the proof of \cite[theorem 1.5]{walter03projective} to obtain a symmetric form $(H,\psi)$ having the Koszul complex, or a related acyclic complex, as its cone.

\begin{proposition}\label{proposition:evenprojectivebundlecutskoszulintwo}
Assume that $r$ is even and let $(H, \psi) = (M, \phi)$ with $\ell = -s-2$.
Then $\psi$ is a quasi-isomorphism. 
\end{proposition}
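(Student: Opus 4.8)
The plan is to show that $\cone(\psi)$ is isomorphic to the Koszul complex $K$. Since $K$ is acyclic by \cite[section 4.6]{thomason90}, and a morphism in $\Perf(\P)$ is a quasi-isomorphism exactly when its cone is acyclic, this proves the proposition. Here $\psi = \phi$ is the symmetric form $M[-1] \to [M[-1], \Delta[r]]$ of Proposition \ref{proposition:symmetricformkoszulcomplexprojectivebundle}.

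First I would unwind $\cone(\psi)$ degreewise: as a graded object it is $[M[-1], \Delta[r]] \oplus M[-1][1]$, and the summand $M[-1][1] \cong M = K_{\le \ell}$ contributes precisely the terms of $K$ in the lower range of cohomological degrees. The hypothesis that $r$ is even is what makes this clean: $K$ has an even number $r+2$ of nonzero terms, so the chosen naive truncation splits $K$ into two complementary halves, each of length $s+1$, with no overlap and no degree left uncovered. Next I would identify $[M[-1], \Delta[r]]$ with the complementary half of $K$. By Proposition \ref{proposition:koszulisselfdual} the map $\tilde{\mu}$ is an isomorphism $K \xrightarrow{\sim} [K, \Delta[r+1]]$ of complexes; combining this with the identification $[M[-1], \Delta[r]] \cong [M, \Delta[r+1]]$ coming from the shift identities \eqref{equation:dualityidentities}, the surjection $K \twoheadrightarrow M$ dualizes to an inclusion of $[M[-1], \Delta[r]]$ into $K$ whose image consists exactly of the upper half of the terms of $K$. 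The essential input is that the wedge pairing $K_i \otimes K_{-i-r-1} \to \Delta$ is unimodular in every degree — this is the non-degeneracy asserted in Proposition \ref{proposition:koszulisselfdual} — so that the orthogonal complement of the lower half is precisely the upper half, and the two (again by the parity of $r$) dovetail perfectly.

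It then remains to match the differentials. On each of the two summands of $\cone(\psi)$ the differential restricts, after the above identifications, to $\pm$ the Koszul differential of $K$ — immediate for the $M$-summand, and a consequence of $\tilde{\mu}$ being a chain map for the other. The only component of $\psi$ that crosses between the two halves inside $\cone(\psi)$ is a single map $K_\ell \to K_{\ell+1}$; chasing the definition $\varphi = \mu \circ (d\iota \otimes \iota)$, the factor $d\iota$ supplies exactly the Koszul differential there, while the factor $\iota$ together with the unimodular pairing supplies the identification of the target with $K_{\ell+1}$. Hence $\cone(\psi) \cong K$ as complexes, up to signs that do not affect acyclicity, and we are done. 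I expect the main obstacle to be precisely this sign- and shift-bookkeeping in the last two steps: one must carry the shifts in \eqref{equation:dualityidentities}, the skew-to-symmetric twist of Remark \ref{remark:skewsymmetrictosymmetric}, and the definition of $\tilde{\mu}$ through the computation and check that the bridging map comes out to be $d$ rather than $0$. A cleaner repackaging of the same idea avoids some of this: the short exact sequence $0 \to K_{\ge \ell+1} \to K \to M \to 0$ gives a distinguished triangle which, since $K \simeq 0$, identifies $M[-1] \simeq K_{\ge \ell+1}$, while dualizing the sequence via $\tilde{\mu}$ identifies $[M[-1], \Delta[r]] \cong K_{\ge \ell+1}$ as well; one then checks that $\psi$ is the composite of these two identifications.
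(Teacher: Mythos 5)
Your argument is correct and is essentially the paper's own proof: the paper likewise uses the self-duality of $K$ (proposition \ref{proposition:koszulisselfdual}) to identify $[M[-1],\Delta[r]]$ with the complementary half of the Koszul complex, exhibits $\psi$ as the single bridging Koszul differential between the two halves, and concludes that the cone is isomorphic to the acyclic complex $K$. One remark on the bookkeeping you flag: your count of two halves of length $s+1$ each is the one that makes the argument close up (the source $M[-1]$ sits in degrees $[-r,-s]$ and its dual in degrees $[-s,0]$, overlapping exactly where the bridging map lives), and it corresponds to the truncation $\ell=-s-1$ rather than the $\ell=-s-2$ of the statement; the paper's own proof diagram carries the same off-by-one (it pairs an $s$-term complex with an $(s{+}2)$-term ``dual''), so this is an indexing issue inherited from the statement rather than a gap in your reasoning.
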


\begin{proof}
By proposition \ref{proposition:koszulisselfdual}, $\psi$ becomes isomorphic to the map of complexes
\begin{equation*}
    \begin{tikzcd}
        K_{-r-1} \arrow[r, "-d"]
            & \ldots \arrow[r, "-d"]
                & K_{-s-2} \arrow[d, "d"] \arrow[r]
                    & 0
                        & { } \\
        { }
            & 0 \arrow[r]
                & K_{-s-1} \arrow[r, "d"]
                    & \ldots \arrow[r, "d"]
                        & K_0,
    \end{tikzcd}
\end{equation*}
concentrated in cohomological degrees $[-r,0]$. 
Therefore, the cone $C(\psi)$ of $\psi$ is isomorphic to the Koszul complex $K$, which is acyclic.
Consequently, $\psi$ is a quasi-isomorphism, as was to be shown.
\end{proof}

Now suppose that $r$ is odd.
Taking $\ell = -s$, the cone $C(\phi)$ becomes isomorphic to the complex $K \oplus K_{-s}[s]$, where $K_{-s} = \Lambda^s \pi^* \mc{E}(-s)$ is viewed as a complex concentrated in degree zero.
The middle terms of $C(\phi)$ are
\begin{equation*}
    \begin{tikzcd}
        \ldots \arrow[r]
            & K_{-s-1} \arrow[r]
                & K_{-s}^{\oplus 2} \arrow[r]
                    & K_{-s+1} \arrow[r]
                        & \ldots,
    \end{tikzcd}
\end{equation*}
where the differential $K_{-s-1} \rightarrow K_{-s}^{\oplus 2}$ is given by $x \mapsto (-dx, dx)$ and the differential $K_{-s}^{\oplus 2} \rightarrow K_{-s+1}$ is given by $(x, y) \mapsto dx + dy$. 
The idea presents itself that there might be a symmetric form whose cone is an acyclic complex that is closely related to the Koszul complex, as long as $K_{-s}$ ``splits into two dual parts''. 
The remainder of this section is dedicated to constructing such a symmetric form, with a suitable assumption on $\Lambda^s \mc{E}$.

For the moment, consider the exact category with duality $\Vect(X)^{[\det \mc{E}]}$ of locally free sheaves on $X$, where the duality is denoted by $\natural$. 
The reason for this excursion to $\Vect(X)$ is that the symmetric form $(H,\psi)$ in $\Perf(\P)^{[\Delta[r]]}$, which is being constructed for the splitting of the homotopy fibration in theorem \ref{theorem:projectivebundleformulaforgwwithtwistofdifferentparity}(ii), needs to have a specific image in $\Perf(X)^{[0]}$. 

\begin{lemma} \label{lemma:middletermexterioralgebraissymmetric}
The restriction of the wedge product $\wedge: \Lambda \mc{E} \otimes \Lambda \mc{E} \ra \Lambda \mc{E}$ to $\Lambda^s \mc{E} \subset \Lambda \mc{E}$ induces a $(-1)^s$-symmetric form
\begin{equation*}
    \nu': \Lambda^s \mc{E} \longrightarrow (\Lambda^s \mc{E})^{\natural}
\end{equation*}
in $\Vect(X)^{[\det \mc{E}]}$, and in particular defines an element $\nu' \in W^{[r+1]}(X, \det \mc{E})$.
\end{lemma}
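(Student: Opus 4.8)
The plan is to verify directly that the wedge product $\wedge \colon \Lambda^s\mc{E} \otimes \Lambda^s\mc{E} \to \Lambda^{2s}\mc{E}$, combined with the identification $\Lambda^{r+1}\mc{E} \cong \det\mc{E}$, lands in $\det\mc{E}$ (since $r$ is odd, $2s = r+1$) and defines a $(-1)^s$-symmetric bilinear form. First I would note that, as $2s = r+1$, the restricted wedge product is a map $\beta\colon \Lambda^s\mc{E}\otimes\Lambda^s\mc{E}\to \Lambda^{r+1}\mc{E}\cong\det\mc{E}$, which by remark \ref{remark:symmetricbilinearformstosymmetricforms} corresponds to a morphism $\nu'\colon\Lambda^s\mc{E}\to\sheafhom(\Lambda^s\mc{E},\det\mc{E}) = (\Lambda^s\mc{E})^{\natural}$ via $\nu'(x) = (y\mapsto x\wedge y)$.

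The key computation is the symmetry condition. For homogeneous elements $x, y$ of degree $s$ in the exterior algebra, graded-commutativity of $\Lambda\mc{E}$ gives $x\wedge y = (-1)^{s^2}\, y\wedge x = (-1)^{s}\, y\wedge x$. Since the switch map $\tau$ on $\Lambda^s\mc{E}\otimes\Lambda^s\mc{E}$ introduces a sign $(-1)^{s^2} = (-1)^s$ as well (these are ungraded vector bundles, but we are tracking the Koszul sign coming from $|x| = |y| = s$ — or, what is equivalent here, we simply want the bilinear form $\beta$ with $\beta\circ\tau = (-1)^s\beta$ in the appropriate sense), the identity $\beta\tau = (-1)^s\beta$ holds. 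By the $\epsilon$-symmetric version of remark \ref{remark:symmetricbilinearformstosymmetricforms} (cf. \cite[remark 1.31]{schlichting17}), this means precisely that $\nu'$ is a $(-1)^s$-symmetric form in $\Vect(X)^{[\det\mc{E}]}$, i.e. $(\nu')^{\natural}\circ\ev = (-1)^s\nu'$. It then follows formally that $\nu'$ represents a class in the $(-1)^s$-symmetric (equivalently $[r+1]$-shifted, since $r+1 = 2s$ is even and shifting the duality by $2$ changes the sign) Witt group $W^{[r+1]}(X,\det\mc{E})$; here one invokes remark \ref{remark:skewsymmetrictosymmetric} to pass between the $(-1)^s$-symmetry convention and the shifted-duality convention, noting that a $(-1)^s$-symmetric form over the line-bundle duality $\natural = \vee_{\det\mc{E}}$ is the same datum as a symmetric form for $\vee_{\det\mc{E}[r+1]}$ after tensoring with $(\OO[\pm 1], \mu)^{\otimes s}$.

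The only genuine subtlety — and the step I expect to require the most care — is bookkeeping the signs: making sure that the Koszul sign conventions of definition \ref{def:dualityonperf} and the identities (\ref{equation:dualityidentities}), when specialized to a complex concentrated in a single degree $s$, reproduce exactly the sign $(-1)^s$ in the symmetry of $\nu'$, and that this is consistent with the degree shift $[r+1]$ rather than, say, $[r]$ or $[r-1]$. This is a finite check that I would carry out using remark \ref{remark:skewsymmetrictosymmetric} with $i = s$ (or $i = \pm 1$ applied $s$ times) and $\epsilon$ chosen so that $(-1)^i\epsilon$ comes out to the sign dictated by symmetry of the plain wedge product; there are no conceptual obstacles, only the ever-present danger of an off-by-one in the exponent. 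Everything else — that $\nu'$ is $\OO_X$-linear, that its source and target are finite locally free, that it is a chain map (trivially, since both sides are concentrated in degree zero) — is immediate.
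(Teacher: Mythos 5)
Your proposal matches the paper's proof essentially verbatim: the same graded-commutativity computation $x\wedge y = (-1)^{s^2}\,y\wedge x = (-1)^s\,y\wedge x$ gives the $(-1)^s$-symmetry, and the identity $2s=r+1$ places the class in $\W^{[r+1]}(X,\det\mc{E})$. The one point you leave implicit is that $\nu'$ is an isomorphism (the wedge pairing $\Lambda^s\mc{E}\otimes\Lambda^s\mc{E}\to\det\mc{E}$ is perfect), which is needed for $\nu'$ to define a Witt class and which the paper states explicitly.
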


\begin{proof}
Since $x \wedge y = (-1)^{s^2}y \wedge x = (-1)^s y \wedge x$ for $x, y \in \Lambda^s \mc{E}$, $\nu'$ is a $(-1)^s$-symmetric form, which is also an isomorphism.
As $2s = r+1$, it follows that $\nu'$ defines an element in $W^{[r+1]}(X, \det \mc{E})$. 
\end{proof}

Now assume that the element $\nu'$ of lemma \ref{lemma:middletermexterioralgebraissymmetric} vanishes in $W^{[r+1]}(X, \det \mc{E})$.
Then $(\Lambda^s \mc{E}, \nu')$ is stably metabolic, so by \cite[remark 29]{balmer05witt} there exists a metabolic space $(\mc{N}', \sigma')$ such that $(\Lambda^s \mc{E}, \nu') \perp (\mc{N}', \sigma')$ is split metabolic and even hyperbolic because $2$ is invertible;
let $(\mc{N}', \sigma')$ be such a metabolic space and fix a split exact sequence
\begin{equation} \label{diagram:splitlagrangianexactsequencemiddleterm}
    \begin{tikzcd}[column sep=huge]
        \mc{P}' \arrow[r, hook, shift left=0.2em, "\iota_{\mc{P}'}"]
            & \Lambda^s \mc{E} \oplus \mc{N}' \arrow[r, two heads, shift left=0.2em, "\iota_{\mc{P}'}^{\natural}(\nu' \oplus \sigma')"] \arrow[l, two heads, shift left=0.2em, "\pr_{\mc{P}'}"]
                & \mc{P}'^{\natural}, \arrow[l, hook, shift left=0.2em, "(\nu' \oplus \sigma')^{-1}\pr_{\mc{P}'}^{\natural}"]
    \end{tikzcd}
\end{equation}
where $\mc{P}'$ is a split Lagrangian of $\Lambda^s \mc{E} \oplus \mc{N}'$ with orthogonal complement $\mc{P}'^{\natural}$.
It is useful to think of this construction as extending $\Lambda^s \mc{E}$ by $\mc{N}'$ such that it can be chopped into two halves $\mc{P}'$ and $\mc{P}'^{\natural}$ which are dual to each other.

Additionally, fix a short exact sequence
\begin{equation*}
    \begin{tikzcd}
        \mc{S}' \arrow[r, hook, "\alpha"]
            & \mc{N}' \arrow[r, two heads, "\alpha^{\natural}\sigma"]
                & \mc{S}'^{\natural},
    \end{tikzcd}
\end{equation*}
which exists since $\mc{N}'$ is metabolic.

Note that $\pi^*: \Vect(X)^{[\det \mc{E}]} \ra \Vect(\P)^{[\det \pi^*\mc{E}]}$ is an exact duality-preserving functor. 
Furthermore, the symmetric form $\id: \OO(-s) \ra \OO(-s)$ in $\Vect(\P)^{[\OO(-r-1)]}$ yields a duality-preserving equivalence
\begin{equation*}
    (\OO(-s),\id) \otimes - : \Vect(\P)^{[\det \pi^* \mc{E}]} \longrightarrow \Vect(\P)^{[\Delta]}.
\end{equation*}
Let $F$ be the composition
\begin{equation*}
    \begin{tikzcd}[column sep=huge]
        F:
            &[-5em] \Vect(X)^{[\det \mc{E}]} \arrow[r, "\pi^*"] 
                & \Vect(\P)^{[\det \pi^* \mc{E}]} \arrow[r, "{(\OO(-s),\id) \otimes -}"]
                    & \Vect(\P)^{[\Delta]},
    \end{tikzcd}
\end{equation*}
and let $\mc{P} = F(\mc{P}')$, $\mc{N} = F(\mc{N}')$, $\mc{S} = F(\mc{S}')$, $\nu = F(\nu')$ and $\sigma = F(\sigma')$. 
Note that $\nu$ is the $(-1)^s$-symmetric form
\begin{equation*}
    \nu: \Lambda^s \pi^*\mc{E}(-s) \longrightarrow \Lambda^s \pi^*\mc{E}^{\natural}(-s)
\end{equation*}
induced by the restriction of the wedge product $\wedge: K \otimes K \ra K$ of the Koszul complex to the middle term $K_{-s}$.
The image of the exact sequence (\ref{diagram:splitlagrangianexactsequencemiddleterm}) under $F$ is another split exact sequence
\begin{equation} \label{diagram:splitlagrangianexactsequencemiddletermkoszul}
    \begin{tikzcd}[column sep=huge]
        \mc{P} \arrow[r, hook, shift left=0.2em, "\iota_{\mc{P}}"]
            & K_{-s} \oplus \mc{N} \arrow[r, two heads, shift left=0.2em, "\iota_{\mc{P}}^{\natural}(\nu \oplus \sigma)"] \arrow[l, two heads, shift left=0.2em, "\pr_{\mc{P}}"]
                & \mc{P}^{\natural} \arrow[l, hook, shift left=0.2em, "(\nu \oplus \sigma)^{-1}\pr_{\mc{P}}^{\natural}"]
    \end{tikzcd}
\end{equation}
in $\Vect(\P)^{[\Delta]}$, where the duality on $\Vect(\P)$ induced by $\Delta$ is also denoted by $\natural$. 
The fact that $\mc{P}$, $\mc{N}$ and $\mc{S}$ lie in the image of $F$ will be crucial in the proof of theorem \ref{theorem:projectivebundleformulaforgwwithtwistofdifferentparity}(ii). 
The following two technical lemmas construct the central square of $(H, \psi)$.
The takeaway is that this square is symmetric when appropriately embedded in $\Perf(\P)$.

\begin{lemma} \label{lemma:centralsquarekoszulcutintwocommutes}
The square
\begin{equation} \label{diagram:centralsquarekoszulcutintwo}
    \begin{tikzcd}[row sep=large, column sep=large]
        K_{-s-1} \oplus \mc{S} \arrow[r, "d \oplus \alpha"] \arrow[d, swap, "d \oplus \alpha"]
            & K_{-s} \oplus \mc{N} \arrow[r, two heads, "\pr_{\mc{P}}"]
                & \mc{P} \arrow[d, hook, "(\nu \oplus \sigma)\iota_{\mc{P}}"] \\
        K_{-s} \oplus \mc{N} \arrow[d, two heads, swap, "\iota_{\mc{P}}^{\natural}(\nu \oplus \sigma)"]
            & { }
                & K_{-s}^{\natural} \oplus \mc{N}^{\natural} \arrow[d, "(d \oplus \alpha)^{\natural}"] \\
        \mc{P}^{\natural} \arrow[r, hook, swap, "\pr_{\mc{P}}^{\natural}"]
                & K_{-s}^{\natural} \oplus \mc{N}^{\natural} \arrow[r, swap, "(d \oplus \alpha)^{\natural}"]
                    & K_{-s-1}^{\natural} \oplus \mc{S}^{\natural}
    \end{tikzcd}
\end{equation}
anti-commutes.
\end{lemma}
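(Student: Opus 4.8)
The plan is to reduce the anti-commutativity of the square (\ref{diagram:centralsquarekoszulcutintwo}) to the single, far more tractable identity
\[
    (d\oplus\alpha)^{\natural}\,(\nu\oplus\sigma)\,(d\oplus\alpha) = 0
\]
of morphisms $K_{-s-1}\oplus\mc{S}\to K_{-s-1}^{\natural}\oplus\mc{S}^{\natural}$, and then to verify this identity one summand at a time. To set things up I would write $W = K_{-s}\oplus\mc{N}$, let $\beta = \nu\oplus\sigma\colon W\to W^{\natural}$ be the $(-1)^{s}$-symmetric form appearing in (\ref{diagram:splitlagrangianexactsequencemiddletermkoszul}), and put $e = \iota_{\mc{P}}\pr_{\mc{P}}\colon W\to W$, so that $e^{\natural} = \pr_{\mc{P}}^{\natural}\iota_{\mc{P}}^{\natural}$. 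Reading the two routes around the square off the diagram, the clockwise composite is $(d\oplus\alpha)^{\natural}\,\beta e\,(d\oplus\alpha)$ and the counter-clockwise composite is $(d\oplus\alpha)^{\natural}\,e^{\natural}\beta\,(d\oplus\alpha)$; hence the square anti-commutes if and only if $(d\oplus\alpha)^{\natural}(\beta e + e^{\natural}\beta)(d\oplus\alpha) = 0$.

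Next I would exploit that (\ref{diagram:splitlagrangianexactsequencemiddletermkoszul}) is a split exact sequence: among its biproduct relations one has $\iota_{\mc{P}}\pr_{\mc{P}} + \beta^{-1}\pr_{\mc{P}}^{\natural}\iota_{\mc{P}}^{\natural}\beta = \id_{W}$. Composing this on the left with $\beta$ gives $\beta e + e^{\natural}\beta = \beta$, so the square anti-commutes precisely when $(d\oplus\alpha)^{\natural}\beta(d\oplus\alpha) = 0$. Since $\natural = \sheafhom(-,\Delta)$ is additive and both $\beta$ and $d\oplus\alpha$ are block diagonal, this morphism is $(d^{\natural}\nu d)\oplus(\alpha^{\natural}\sigma\alpha)$, and it remains to kill each block.

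The $\mc{S}$-block vanishes at once: $\mc{S}\xrightarrow{\alpha}\mc{N}\xrightarrow{\alpha^{\natural}\sigma}\mc{S}^{\natural}$ is the image under the exact, duality-preserving functor $F$ of a Lagrangian sequence for the metabolic space $(\mc{N}',\sigma')$, so $\alpha^{\natural}\sigma\alpha = 0$. For the $K_{-s}$-block, recall that $\nu$ is the symmetric form attached to the wedge pairing $K_{-s}\otimes K_{-s}\to\Lambda^{r+1}\pi^{*}\mc{E}(-r-1) = \Delta$ (cf. Proposition \ref{proposition:koszulisselfdual}), and $d^{\natural} = \sheafhom(d,\Delta)$ is precomposition with $d$; hence evaluating $(d^{\natural}\nu d)(\xi)$ on $\zeta$ gives, up to sign, $d\xi\wedge d\zeta$ for $\xi,\zeta\in K_{-s-1}$, so it suffices to show $d\xi\wedge d\zeta = 0$. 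This is a rank count: since $2s = r+1$, the product $\xi\wedge d\zeta$ lands in $\Lambda^{2s+1}\pi^{*}\mc{E}(-2s-1) = \Lambda^{r+2}\pi^{*}\mc{E}(-r-2) = 0$, because $\rk\mc{E} = r+1 < r+2$; applying the graded derivation $d$ of the dg algebra $K$ and using $d^{2} = 0$ then gives $0 = d(\xi\wedge d\zeta) = d\xi\wedge d\zeta$, as needed.

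The main obstacle is the first step: recognizing that, once flanked by $d\oplus\alpha$ and its dual, the two off-diagonal composites of (\ref{diagram:centralsquarekoszulcutintwo}) assemble into $(d\oplus\alpha)^{\natural}\beta(d\oplus\alpha)$ exactly because the split-exactness of (\ref{diagram:splitlagrangianexactsequencemiddletermkoszul}) forces $\beta e + e^{\natural}\beta = \beta$. After that reduction the two blockwise checks are short; the only residual care needed is the sign bookkeeping confirming that the composites cancel — so the square anti-commutes rather than commutes — and noting that the sign ambiguity in the definition of $\nu$ is irrelevant to $d\xi\wedge d\zeta = 0$.
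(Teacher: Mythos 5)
Your proposal is correct and follows essentially the same route as the paper: both reduce the anti-commutativity to the identity $(\nu\oplus\sigma)\iota_{\mc{P}}\pr_{\mc{P}} + \pr_{\mc{P}}^{\natural}\iota_{\mc{P}}^{\natural}(\nu\oplus\sigma) = \nu\oplus\sigma$ coming from the split exact sequence (\ref{diagram:splitlagrangianexactsequencemiddletermkoszul}), and then kill $(d\oplus\alpha)^{\natural}(\nu\oplus\sigma)(d\oplus\alpha)$ blockwise via $d^{\natural}\nu d = 0$ (the same degree/Leibniz argument) and $\alpha^{\natural}\sigma\alpha = 0$. Your write-up even makes explicit the vanishing of the $\mc{S}$-block, which the paper uses silently.
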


\begin{proof}
The split exact sequence (\ref{diagram:splitlagrangianexactsequencemiddletermkoszul}) yields
\begin{equation*}
    \iota_{\mc{P}}\pr_{\mc{P}} + (\nu \oplus \sigma)^{-1}\pr_{\mc{P}}^{\natural}\iota_{\mc{P}}^{\natural}(\nu \oplus \sigma) = \id_{(\K_{-s} \oplus \mc{N})},
\end{equation*}
which becomes
\begin{equation*}
    (\nu \oplus \sigma)\iota_{\mc{P}}\pr_{\mc{P}} + \pr_{\mc{P}}^{\natural}\iota_{\mc{P}}^{\natural}(\nu \oplus \sigma) = (\nu \oplus \sigma)
\end{equation*}
when composed with $(\nu \oplus \sigma)$.
Furthermore, note that $d^{\natural}\nu d: K_{-s-1} \rightarrow K_{-s-1}^{\natural}$ is given by $x \mapsto (y \mapsto d(x) \wedge d(y))$, but $d(x) \wedge d(y) = d(x \wedge d(y)) = d(0) = 0$, so $d^{\natural}\nu d = 0$. 
Thus the sum of the two paths of the square satisfies
\begin{equation*}
    \begin{aligned}
        & (d \oplus \alpha)^{\natural}(\nu \oplus \sigma)\iota_{\mc{P}}\pr_{\mc{P}}(d \oplus \alpha) + (d \oplus \alpha)^{\natural}\pr_{\mc{P}}^{\natural}\iota_{\mc{P}}^{\natural}(\nu \oplus \sigma)(d \oplus \alpha) \\
        & \qquad = (d \oplus \alpha)^{\natural}((\nu \oplus \sigma)\iota_{\mc{P}}\pr_{\mc{P}} + \pr_{\mc{P}}^{\natural}\iota_{\mc{P}}^{\natural}(\nu \oplus \sigma))(d \oplus \alpha) \\
        & \qquad = (d \oplus \alpha)^{\natural}(\nu \oplus \sigma)(d \oplus \alpha) \\
        & \qquad = (d^{\natural}\nu d \oplus \alpha^{\natural}\sigma\alpha) \\
        & \qquad = 0,
    \end{aligned}
\end{equation*}
which proves the result.
\end{proof}

The next lemma applies the previous one in the context of $\Perf(\P)^{[\Delta[r]]}$, with the duality $\vee = \vee_{\Delta[r]}$.
It is essentially an application of the identities (\ref{equation:dualityidentities}).

\begin{lemma} \label{lemma:centralsquarekoszulintwoissymmetric}
The map of complexes $\psi$ given by
\begin{equation*}
    \begin{tikzcd}[column sep=huge, row sep=huge]
        K_{-s-1} \oplus \mc{S} \arrow[r, "\pr_{\mc{P}}(d \oplus \alpha)"] \arrow[d, swap, "\iota_{\mc{P}}^{\natural}(\nu \oplus \sigma)(d \oplus \alpha)"]
            &[1em] \mc{P} \arrow[d, "(-1)^s(d \oplus \alpha)^{\natural}(\nu \oplus \sigma)\iota_{\mc{P}}"] \\
        \mc{P}^{\natural} \arrow[r, swap, "(-1)^{s+1}(d \oplus \alpha)^{\natural}\pr_{\mc{P}}^{\natural}"]
            & K_{-s-1}^{\natural} \oplus \mc{S}^{\natural}
    \end{tikzcd}
\end{equation*}
concentrated in cohomological degrees $[-s,-s+1]$ is symmetric in the pretriangulated dg category with duality $(\Perf(\P), \vee, \can)$.
\end{lemma}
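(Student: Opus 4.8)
The plan is to unwind everything componentwise and verify the two conditions that make $\psi$ a symmetric form in $(\Perf(\P),\vee,\can)$, $\vee = \vee_{\Delta[r]}$: that $\psi$ is a closed morphism of degree zero (a map of complexes), and that $\psi^{\vee}\can_H = \psi$ in the sense of Definition \ref{def:symmetricform}. Write $H$ for the two-term complex $K_{-s-1}\oplus\mc{S}\to\mc{P}$ with $K_{-s-1}\oplus\mc{S}$ in cohomological degree $-s$, $\mc{P}$ in degree $-s+1$, and differential $\pr_{\mc{P}}(d\oplus\alpha)$; the vertical arrows of the displayed square are the two components $\psi_{-s}$ and $\psi_{-s+1}$, and the bottom row is to be identified with the differential of $H^{\vee}$.

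First I would read off $H^{\vee}$, $\can_H$ and $\psi^{\vee}$ from the identities (\ref{equation:dualityidentities}), using crucially that $r+1 = 2s$ (so $r$ is odd). Since $s-r = -s+1$ and $s-1-r = -s$, one gets $(H^{\vee})_{-s} = H_{-s+1}^{\natural} = \mc{P}^{\natural}$, $(H^{\vee})_{-s+1} = H_{-s}^{\natural} = K_{-s-1}^{\natural}\oplus\mc{S}^{\natural}$, and differential $(d_{H^{\vee}})_{-s} = (-1)^{-s+1}(d_H)_{-s}^{\natural} = (-1)^{s+1}(d\oplus\alpha)^{\natural}\pr_{\mc{P}}^{\natural}$, matching the bottom row of the diagram. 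Moreover $(\can_H)_i = (-1)^{i(r+1)}\ev_{H_i} = \ev_{H_i}$ because $r+1$ is even, and $(\psi^{\vee})_i = (-1)^{i\cdot 0}\psi^{\natural}_{-i-r} = \psi^{\natural}_{-i-r}$ since $\psi$ has degree $0$. The chain-map condition $\psi_{-s+1}d_H = d_{H^{\vee}}\psi_{-s}$, after cancelling the common sign $(-1)^s$, is exactly the anti-commutativity of the hexagon in Lemma \ref{lemma:centralsquarekoszulcutintwocommutes}, so nothing new is required for that part.

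It then remains to check $\psi^{\vee}\can_H = \psi$ on the two degrees. In degree $-s$ this is $\psi_{-s+1}^{\natural}\,\ev_{H_{-s}} = \psi_{-s}$: substituting $\psi_{-s+1} = (-1)^s(d\oplus\alpha)^{\natural}(\nu\oplus\sigma)\iota_{\mc{P}}$, dualizing, pushing $\ev$ through $(d\oplus\alpha)$ by naturality of evaluation, and using that $\nu\oplus\sigma$ is $(-1)^s$-symmetric (i.e. $(\nu\oplus\sigma)^{\natural}\ev = (-1)^s(\nu\oplus\sigma)$) turns the left side into $(-1)^{2s}\iota_{\mc{P}}^{\natural}(\nu\oplus\sigma)(d\oplus\alpha) = \psi_{-s}$. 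The degree $-s+1$ identity $\psi_{-s}^{\natural}\,\ev_{H_{-s+1}} = \psi_{-s+1}$ is entirely parallel, this time pushing $\ev$ through $\iota_{\mc{P}}$. The $(-1)^s$-symmetry of $\nu\oplus\sigma$ that is used here follows from Lemma \ref{lemma:middletermexterioralgebraissymmetric} for $\nu$, from $\sigma = F(\sigma')$ being the image of the form on a metabolic space in $\W^{[r+1]}(X,\det\mc{E})$ (hence $(-1)^s$-symmetric, as $r+1 = 2s$), and from $F$ being duality-preserving; the short exact sequence (\ref{diagram:splitlagrangianexactsequencemiddletermkoszul}) plays no role beyond having produced $\mc{P}$, $\iota_{\mc{P}}$, $\pr_{\mc{P}}$.

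The only genuine obstacle is the sign bookkeeping: one must keep the four formulas (\ref{equation:dualityidentities}) straight, apply naturality of the evaluation map at the right spot, and remember that the correct symmetry sign of $\nu$ and of $\sigma$ is $(-1)^s$ and not $+1$ — this is forced by the shift $[r+1]$ with $r+1 = 2s$. Once $H^{\vee}$ and $\can_H$ have been pinned down and the chain-map condition is outsourced to Lemma \ref{lemma:centralsquarekoszulcutintwocommutes}, everything reduces to these two short sign computations.
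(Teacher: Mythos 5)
Your proposal is correct and follows essentially the same route as the paper's proof: the chain-map condition is reduced to the anti-commutativity of Lemma \ref{lemma:centralsquarekoszulcutintwocommutes} (with the signs $(-1)^s$, $(-1)^{s+1}$ absorbing the anti-commutation), and the symmetry $\psi^{\vee}\can = \psi$ is checked in the two degrees via the identities (\ref{equation:dualityidentities}), naturality of $\ev$, and the $(-1)^s$-symmetry of $\nu\oplus\sigma$. You merely make explicit the identification of $H^{\vee}$ and its differential, which the paper leaves implicit.
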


\begin{proof}
By lemma \ref{lemma:centralsquarekoszulcutintwocommutes}, the square commutes.
For notational convenience, the subscript of the evaluation map $\ev_{\mc{F}}$ is suppressed.
It remains to be shown that $\psi = \psi^{\vee}\can$.
Note that $(\psi^{\vee})_{-s+1} = \psi^{\natural}_{s-1-r} = \psi^{\natural}_{-s}$.
Since $(\nu \oplus \sigma)$ is $(-1)^s$-symmetric,
\begin{align*}
    (\iota_{\mc{P}}^{\natural}(\nu \oplus \sigma)(d \oplus \alpha))^{\natural}\ev
        & = (d \oplus \alpha)^{\natural}(\nu \oplus \sigma)^{\natural}\iota_{\mc{P}}^{\natural\natural}\ev \\
        & = (d \oplus \alpha)^{\natural}(\nu \oplus \sigma)^{\natural}\ev \iota_{\mc{P}} \\
        & = (-1)^s(d \oplus \alpha)^{\natural}(\nu \oplus \sigma)\iota_{\mc{P}}.
\end{align*}
A similar computation shows that
\begin{equation*}
    \left((-1)^s(d \oplus \alpha)^{\natural}(\nu \oplus \sigma)\iota_{\mc{P}}\right)^{\natural}\ev = \iota_{\mc{P}}^{\natural}(\nu \oplus \sigma)(d \oplus \alpha),
\end{equation*}
which concludes the proof.
\end{proof}

Let $H$ be the following complex, concentrated in degrees $[-r,-s+1]$:
\begin{equation*}
    \begin{tikzcd}
        K_{-r-1} \arrow[r, "d"]
            &[-0.5em] \dots \arrow[r, "d"]
                &[-0.5em] K_{-s-2} \arrow[r, "d"]
                    & K_{-s-1} \oplus \mc{S} \arrow[r, "\pr_{\mc{P}}(d \oplus \alpha)"]
                        &[1em] \mc{P},
    \end{tikzcd}
\end{equation*}
where $d: K_{-s-2} \rightarrow K_{-s-1} \oplus \mc{S}$ is the composition of the differential $d: K_{-s-2} \rightarrow K_{-s-1}$ and the canonical inclusion $K_{-s-1} \rightarrow K_{-s-1} \oplus \mc{S}$.
In some sense, $H$ is ``half'' of the Koszul complex, with some surgical alterations at the end to ease the conditions under which it can be constructed.
Piecing together the various results obtained thus far yields the following theorem.

\begin{theorem} \label{theorem:middletermofkoszulcomplexwittnilcutskoszulintwo}
Assume that $\nu' = 0$ in $W^{[r+1]}(X, \det \mc{E})$.
Let $\psi: H \rightarrow H^{\vee}$ be the chain map in $\Perf(\P)^{[\Delta[r]]}$ given by
\begin{equation*}
    \begin{tikzcd}[column sep=small]
        H_{-r} \arrow[r] \arrow[d]
            & \dots \arrow[r] \arrow[d]
                & H_{-s-1} \arrow[r] \arrow[d]
                    & H_{-s} \arrow[r] \arrow[d]
                        & H_{-s+1} \arrow[r] \arrow[d]
                            & 0 \arrow[r] \arrow[d]
                                & \dots \arrow[r] \arrow[d]
                                    & 0 \arrow[d] \\
        0 \arrow[r]
            & \dots \arrow[r]
                & 0 \arrow[r]
                    & H_{-s+1}^{\natural} \arrow[r]
                        & H_{-s}^{\natural} \arrow[r]
                            & H_{-s-1}^{\natural} \arrow[r]
                                & \dots \arrow[r]
                                    & H_{-r}^{\natural},
    \end{tikzcd}
\end{equation*}
where the central square is that of lemma \ref{lemma:centralsquarekoszulintwoissymmetric}.
Then $\psi$ is symmetric and a quasi-isomorphism.
\end{theorem}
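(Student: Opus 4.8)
The plan is to establish the two assertions separately. \emph{Symmetry} will be almost immediate from the two preceding lemmas: since $r$ is odd we have $2s = r+1$, so $H$ is concentrated in cohomological degrees $[-r,-s+1]$, and by the identities (\ref{equation:dualityidentities}) for $L = \Delta[r]$ the dual complex $H^{\vee} = [H,\Delta[r]]$ is concentrated in degrees $[-s,0]$; these two ranges overlap only in degrees $-s$ and $-s+1$. Hence every chain map $H\to H^{\vee}$ — in particular $\psi$ and $\psi^{\vee}\can$ — has all components zero outside degrees $-s,-s+1$, and there $\psi$ is by construction the two-term map of Lemma \ref{lemma:centralsquarekoszulintwoissymmetric}, so the identity $\psi = \psi^{\vee}\can$ is precisely that lemma. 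That $\psi$ is a chain map at all reduces to commutativity of the central square (Lemma \ref{lemma:centralsquarekoszulcutintwocommutes}) together with the relations $\psi_{-s}\circ d = 0$ and $d\circ\psi_{-s+1} = 0$, which hold because $(d\oplus\alpha)(dx,0) = (d^{2}x,0) = 0$ and $d^{\natural}\circ d^{\natural} = 0$.

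For the quasi-isomorphism, note that $C(\psi)$ is a bounded complex of locally free $\OO_{\P}$-modules (concentrated in degrees $[-r-1,0]$), so it is enough to show that it is acyclic. I would write $C(\psi) = H[1]\oplus H^{\vee}$ with its cone differential and, using the Koszul self-duality $\tilde{\mu}\colon K\xrightarrow{\ \sim\ }[K,\Delta[r+1]]$ of Proposition \ref{proposition:koszulisselfdual} to identify $K_{j}^{\natural}$ with $K_{-j-r-1}$, check that $C(\psi)$ agrees with the Koszul complex $K$ in every cohomological degree outside $\{-s-1,-s,-s+1\}$, while in those three degrees it reads
\[
    \cdots \longrightarrow K_{-s-2} \longrightarrow K_{-s-1}\oplus\mc{S} \stackrel{\partial_{1}}{\longrightarrow} \mc{P}\oplus\mc{P}^{\natural} \stackrel{\partial_{2}}{\longrightarrow} K_{-s+1}\oplus\mc{S}^{\natural} \longrightarrow K_{-s+2} \longrightarrow \cdots,
\]
where $\partial_{1} = \bigl(-\pr_{\mc{P}}(d\oplus\alpha),\, \iota_{\mc{P}}^{\natural}(\nu\oplus\sigma)(d\oplus\alpha)\bigr)$ on $K_{-s-1}\oplus\mc{S}$ and $\partial_{2}$ is the dual expression, both read off from the central square of Lemma \ref{lemma:centralsquarekoszulintwoissymmetric}.

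The final step, and the one I expect to be the main obstacle, is to split the surgery data off as contractible summands. Using the split exact sequence (\ref{diagram:splitlagrangianexactsequencemiddletermkoszul}), which supplies an isomorphism $\mc{P}\oplus\mc{P}^{\natural}\cong K_{-s}\oplus\mc{N}$ satisfying $\iota_{\mc{P}}\pr_{\mc{P}} + (\nu\oplus\sigma)^{-1}\pr_{\mc{P}}^{\natural}\iota_{\mc{P}}^{\natural}(\nu\oplus\sigma) = \id$, together with a splitting of $\mc{S}\hookrightarrow\mc{N}\twoheadrightarrow\mc{S}^{\natural}$, one computes that under this identification $\partial_{1}$ restricted to the summand $\mc{S}$ becomes $t\mapsto(\id - 2\iota_{\mc{P}}\pr_{\mc{P}})(0,\alpha t)\in K_{-s}\oplus\mc{N}$; since $2$ is invertible the reflection $\id - 2\iota_{\mc{P}}\pr_{\mc{P}}$ is an involution, so this is a split monomorphism, and since nothing maps into this copy of $\mc{S}$ from degree $-s-2$ it spans a contractible subcomplex $\mc{S}\xrightarrow{\ \sim\ }\mc{S}$ in degrees $[-s-1,-s]$. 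Passing to the quotient and running the dual argument removes a further contractible $\mc{S}^{\natural}\xrightarrow{\ \sim\ }\mc{S}^{\natural}$ in degrees $[-s,-s+1]$; after absorbing the reflection into an isomorphism of complexes, what remains is a complex isomorphic to the Koszul complex $K$ (the contractible part is absent when $\mc{N}' = 0$, e.g. for $\mc{E}$ trivial), which is acyclic by \cite[section 4.6]{thomason90}, whence $C(\psi)$ is acyclic and $\psi$ is a quasi-isomorphism. The delicate part is carrying the Koszul-duality signs and the splitting data of (\ref{diagram:splitlagrangianexactsequencemiddletermkoszul}) through the cone differential to pin down the decomposition $C(\psi)\cong K\oplus(\text{contractible})$; should the global bookkeeping prove unwieldy, one can instead verify acyclicity of $C(\psi)$ locally on $\P$, over opens where $\mc{E}$ is free and $\pi^{*}\mc{E}\twoheadrightarrow\OO(1)$ splits, thereby reducing the claim to a finite linear-algebra computation with the standard Koszul complex and free split forms.
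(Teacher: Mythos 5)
Your proposal is correct and follows essentially the same route as the paper: symmetry is reduced (via the degree ranges of $H$ and $H^{\natural}$) to lemma \ref{lemma:centralsquarekoszulintwoissymmetric}, and the quasi-isomorphism is obtained by identifying $C(\psi)$ with the Koszul complex plus an acyclic piece built from $\mc{S} \rightarrow \mc{N} \rightarrow \mc{S}^{\natural}$. If anything, your use of the reflection $\id - 2\iota_{\mc{P}}\pr_{\mc{P}}$ to transport the cone differential through the identification $\mc{P} \oplus \mc{P}^{\natural} \cong K_{-s} \oplus \mc{N}$ is more explicit than the paper, which simply asserts that the middle differentials become $d \oplus \alpha$ and $d \oplus \alpha^{\natural}\sigma$.
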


\begin{proof}
In this proof, let $d'$ denote the differential of $H$. 
The central square commutes and is symmetric by lemma $\ref{lemma:centralsquarekoszulintwoissymmetric}$. 
The square directly left of the central square commutes since
\begin{equation*}
    \psi_{-s}d'_{-s-1} = \iota_{\mc{P}}^{\natural}(\nu \oplus \sigma)(d \oplus \alpha)d'_{-s-1} = 0,
\end{equation*}
and similarly for the square directly right of the central square.
It follows that $\psi$ is symmetric and it remains to be shown that $\psi$ is a quasi-isomorphism, or equivalently, that the cone $C(\psi)$ of $\psi$ is acyclic.
Note that $C(\psi)$ is the complex
\begin{equation*}
    \begin{tikzcd}[column sep=tiny]
        K_{-r-1} \arrow[r]
            & \dots \arrow[r]
                & K_{-s-1} \oplus \mc{S} \arrow[r]
                    & K_{-s} \oplus \mc{N} \arrow[r]
                        & K_{-s-1}^{\natural} \oplus \mc{S}^{\natural} \arrow[r]
                            & \dots \arrow[r]
                                & K_{-r-1}^{\natural},
    \end{tikzcd}
\end{equation*}
which is isomorphic to the Koszul complex away from the middle degrees $[-s-2, -s+2]$ by proposition \ref{proposition:koszulisselfdual}.
In the middle degrees, $C(\phi)$ is given as
\begin{equation*}
    \begin{tikzcd}
        K_{-s-2} \arrow[r, "d"]
            &[-0.75em] K_{-s-1} \oplus \mc{S} \arrow[r, "d \oplus \alpha"]
                & K_{-s} \oplus \mc{N} \arrow[r, "d \oplus \alpha^{\natural}\sigma"]
                    & K_{-s-1}^{\natural} \oplus \mc{S}^{\natural} \arrow[r, "d"]
                        &[-0.75em] K_{-s-2}^{\natural}.
    \end{tikzcd}
\end{equation*}
Thus $C(\phi)$ is the direct sum of the acyclic koszul complex $K$ and the exact sequence $\mc{S} \rightarrow \mc{N} \rightarrow \mc{S}^{\natural}$, seen as an acyclic complex concentrated in degrees $[-s-1,-s+1]$.
Therefore, $C(\phi)$ itself is acyclic, as was to be shown.
\end{proof}

Theorem \ref{theorem:middletermofkoszulcomplexwittnilcutskoszulintwo} finishes the construction of the symmetric form $(H,\psi)$, but it is not a priori clear when the condition on $\nu'$ holds.
The following lemma provides a useful criterion.

\begin{lemma} \label{lemma:quotientbundleoddrankimplieswittnil}
If $\mc{E}$ admits a quotient bundle of odd rank, then $\nu$ vanishes in $W^{[r+1]}(X, \det \mc{E})$.
\end{lemma}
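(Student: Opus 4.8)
The plan is to exhibit an explicit Lagrangian for the symmetric space $(\Lambda^s\mc{E},\nu')$ of Lemma \ref{lemma:middletermexterioralgebraissymmetric}, using the standard filtration on $\Lambda^s\mc{E}$ attached to a quotient of odd rank. Since $F$ is a duality-preserving functor and $\nu = F(\nu')$, it suffices to show that $(\Lambda^s\mc{E},\nu')$ is metabolic, so that $\nu'$, and hence $\nu$, vanishes in the Witt group by \cite[remark 29]{balmer05witt}. Recall that $r$ is odd here, so that $r+1 = 2s$ is even. Write $\mc{E}\twoheadrightarrow\mc{Q}$ for a quotient bundle of odd rank $q$ and $\mc{F} = \ker(\mc{E}\to\mc{Q})$, a locally free $\OO_X$-module of rank $f = 2s-q$, which is again odd; note $1\le f\le 2s-1$, so $p_0 := (f+1)/2$ is an integer in $\{1,\dots,s\}$.

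First I would set up the filtration $\Lambda^s\mc{E} = \mc{F}^{(0)}\supseteq\mc{F}^{(1)}\supseteq\dots$, where $\mc{F}^{(p)}$ is the image of the natural map $\Lambda^p\mc{F}\otimes\Lambda^{s-p}\mc{E}\to\Lambda^s\mc{E}$. As the extension $0\to\mc{F}\to\mc{E}\to\mc{Q}\to 0$ is locally split, each $\mc{F}^{(p)}$ is a subbundle and $\mc{F}^{(p)}/\mc{F}^{(p+1)}\cong\Lambda^p\mc{F}\otimes\Lambda^{s-p}\mc{Q}$. Applying the same construction with $s$ replaced by $2s$ shows that $\Lambda^{2s}\mc{E}=\det\mc{E}$ is concentrated in a single filtration step: $\det\mc{E}$ equals the image of $\Lambda^f\mc{F}\otimes\Lambda^q\mc{E}\to\det\mc{E}$, while the image of $\Lambda^{f+1}\mc{F}\otimes\Lambda^{q-1}\mc{E}$ already vanishes. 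Since the wedge product carries $\mc{F}^{(p)}\otimes\mc{F}^{(p')}$ into the image of $\Lambda^{p+p'}\mc{F}\otimes\Lambda^{2s-p-p'}\mc{E}\to\det\mc{E}$, this has two consequences: (a) $\mc{F}^{(p)}\wedge\mc{F}^{(p')}=0$ in $\det\mc{E}$ whenever $p+p'>f$; and (b) on associated gradeds the form $\nu'$ pairs $\mc{F}^{(p)}/\mc{F}^{(p+1)}$ only with $\mc{F}^{(f-p)}/\mc{F}^{(f-p+1)}$, and there it is the tensor product of the wedge pairings $\Lambda^p\mc{F}\otimes\Lambda^{f-p}\mc{F}\to\det\mc{F}$ and $\Lambda^{s-p}\mc{Q}\otimes\Lambda^{s+p-f}\mc{Q}\to\det\mc{Q}$, both of which are perfect.

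The Lagrangian will be $\mc{F}^{(p_0)}$. By (a) with $p=p'=p_0$ (so $p+p'=f+1>f$) the subbundle $\mc{F}^{(p_0)}$ is isotropic for $\nu'$. By (b) the form pairs the successive graded quotients of $\Lambda^s\mc{E}$ in complementary pairs $p\leftrightarrow f-p$ and does so perfectly; since $f$ is odd there is no self-paired quotient, and as $\mc{F}^{(p_0)}$ is precisely the part of the filtration with index $p\ge p_0>f/2$, the map $p\mapsto f-p$ identifies the graded pieces of $\mc{F}^{(p_0)}$ with those of the complementary quotient, so $\rk\mc{F}^{(p_0)}=\tfrac12\rk\Lambda^s\mc{E}$. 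Because $\nu'$ is an isomorphism (Lemma \ref{lemma:middletermexterioralgebraissymmetric}), the orthogonal complement $(\mc{F}^{(p_0)})^\perp$ is the kernel of a surjection of vector bundles, hence a subbundle, of rank $\rk\Lambda^s\mc{E}-\rk\mc{F}^{(p_0)}=\rk\mc{F}^{(p_0)}$; combined with $\mc{F}^{(p_0)}\subseteq(\mc{F}^{(p_0)})^\perp$ this forces $\mc{F}^{(p_0)}=(\mc{F}^{(p_0)})^\perp$, i.e.\ $\mc{F}^{(p_0)}$ is a Lagrangian. Thus $(\Lambda^s\mc{E},\nu')$ is metabolic, and the statement follows.

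I expect the main obstacle to be item (b): justifying carefully that the wedge pairing on $\Lambda^s\mc{E}$ descends to the stated perfect pairing between complementary graded quotients, and is zero between non-complementary ones. This reduces to the functoriality of the exterior-power filtration of an extension with respect to the multiplication $\Lambda^\bullet\mc{E}\otimes\Lambda^\bullet\mc{E}\to\Lambda^\bullet\mc{E}$, together with the classical fact that $\Lambda^a\mc{V}\otimes\Lambda^{b-a}\mc{V}\to\Lambda^b\mc{V}$ is a perfect pairing when $\rk\mc{V}=b$; all of this can be checked locally, where the extension splits. Once this is in place, identifying $\mc{F}^{(p_0)}$ as a Lagrangian and passing to the Witt group is routine.
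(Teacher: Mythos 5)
Your argument is correct, but it is worth knowing that the paper does not actually prove this lemma: its entire ``proof'' is the citation \cite[proposition 8.1]{walter03projective}, so what you have written is a self-contained reconstruction of Walter's argument rather than a parallel to anything in the text. Your route --- the exterior-power filtration $\mc{F}^{(p)} = \operatorname{im}(\Lambda^p\mc{F}\otimes\Lambda^{s-p}\mc{E}\to\Lambda^s\mc{E})$ attached to $0\to\mc{F}\to\mc{E}\to\mc{Q}\to 0$, isotropy of $\mc{F}^{(p_0)}$ with $p_0=(f+1)/2$ because $\Lambda^{f+1}\mc{F}=0$, and the half-rank count forcing $\mc{F}^{(p_0)}=(\mc{F}^{(p_0)})^{\perp}$ --- is sound, and the parity bookkeeping ($f=2s-q$ odd, so $p_0$ is an integer and $p\mapsto f-p$ has no fixed point) is exactly right. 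Two small remarks. First, you do not really need the full strength of your item (b): the only thing you extract from it is $\rk\mc{F}^{(p_0)}=\tfrac12\binom{2s}{s}$, which follows directly from $\binom{f}{p}\binom{q}{s-p}=\binom{f}{f-p}\binom{q}{s-(f-p)}$ and Vandermonde, so you could bypass the perfect-pairing-on-gradeds discussion entirely and keep only (a). Second, since the Witt group here is that of an exact category with duality, make explicit that $\mc{F}^{(p_0)}\hookrightarrow\Lambda^s\mc{E}$ is an admissible subobject (it is a subbundle with locally free quotient, as the extension is locally split), so that the totally isotropic equal-to-its-own-perp subobject really witnesses metabolicity in the sense needed for vanishing in $W^{[r+1]}(X,\det\mc{E})$ via \cite[remark 29]{balmer05witt}; your observation that $\Lambda^s\mc{E}/(\mc{F}^{(p_0)})^{\perp}\cong(\mc{F}^{(p_0)})^{\natural}$ already gives this. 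The trade-off between the two approaches is the obvious one: the paper's citation is shorter, while your proof makes the splitting criterion in Theorem \ref{theorem:projectivebundleformulaforgwwithtwistofdifferentparity}(ii) verifiable without consulting \cite{walter03projective}.
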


\begin{proof}
This is \cite[proposition 8.1]{walter03projective}.
Note that if $\mc{E}$ is trivial, it certainly admits a quotient bundle of odd rank.
\end{proof}

\subsection{Localization and additivity for Grothendieck-Witt spectra}
    \label{subsection:localizationandadditivityforgrothendieckwittspectra}

This section is an intermezzo recalling two fundamental results of Grothendieck-Witt theory as presented in \cite{schlichting17}, which are adapted to the current context.
They are key ingredients in the proof of main theorems \ref{maintheorem:projectivebundleformulawithtwistofsameparityasdimension} and \ref{maintheorem:projectivebundleformulaforgwwithtwistofdifferentparity}.
The first is a combination of \cite[theorem 6.6]{schlichting17} and \cite[theorem 8.10]{schlichting17}.

\begin{theorem}[localization for $\GW$ and $\GGW$] \label{theorem:localizationforgw}
Let $\mc{A} \rightarrow \mc{B} \rightarrow \mc{C}$ be a Morita exact sequence of dg categories with weak equivalences and duality, all containing $\tfrac{1}{2}$.
Let $n \in \Z$.
Then there is a homotopy fibration of Karoubi-Grothendieck-Witt spectra
\begin{equation*}
    \begin{tikzcd}
        \GGW^{[n]}(\mc{A}) \arrow[r]
            & \GGW^{[n]}(\mc{B}) \arrow[r]
                & \GGW^{[n]}(\mc{C}).
    \end{tikzcd}
\end{equation*}
If, moreover, the sequence $\mc{A} \rightarrow \mc{B} \rightarrow \mc{C}$ of dg categories is quasi-exact, then there is also a homotopy fibration of Grothendieck-Witt spectra
\begin{equation*}
    \begin{tikzcd}
        \GW^{[n]}(\mc{A}) \arrow[r]
            & \GW^{[n]}(\mc{B}) \arrow[r]
                & \GW^{[n]}(\mc{C}).
    \end{tikzcd}
\end{equation*}
\end{theorem}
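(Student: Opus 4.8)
The plan is to deduce this statement from the fundamental localization and additivity theorems of \cite{schlichting17}, which are stated there in the language of exact dg categories with weak equivalences and duality. The two assertions—one for Karoubi–Grothendieck–Witt spectra $\GGW$, one for ordinary Grothendieck–Witt spectra $\GW$—have slightly different hypotheses (Morita exact versus quasi-exact), so I would treat them as two applications of the same machinery under the respective hypotheses.

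First, for the $\GGW$ statement: the input is a Morita exact sequence $\mc{A} \to \mc{B} \to \mc{C}$ of dg categories with weak equivalences and duality, all containing $\tfrac12$. I would invoke \cite[theorem 6.6]{schlichting17}, which produces a homotopy fibration of Grothendieck–Witt spaces (or the associated $K$-theoretic localization sequence at the space level) from such a sequence, and then pass to the stable Karoubi–Grothendieck–Witt spectrum. The key point is that $\GGW$ is, by construction in \cite{schlichting17}, obtained from the Grothendieck–Witt space functor by a Bott-type stabilization/Karoubi-ification that is exact, so it carries the space-level fibration to a fibration of spectra; here one uses that all categories contain $\tfrac12$, which is exactly the standing hypothesis under which \cite[theorem 8.10]{schlichting17} identifies $\GGW$ and shows it is a localizing invariant. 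Concretely I would cite \cite[theorem 8.10]{schlichting17} for the statement that $\GGW$ sends Morita exact sequences to homotopy fibrations, and note that the shift index $[n]$ is just a bookkeeping of the $n$-th shifted duality, to which the same theorem applies verbatim.

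Second, for the $\GW$ statement: now additionally assume $\mc{A} \to \mc{B} \to \mc{C}$ is quasi-exact. Under this stronger hypothesis \cite[theorem 6.6]{schlichting17} already gives a homotopy fibration of Grothendieck–Witt \emph{spaces}, and the delooping arguments of \cite{schlichting17} (the negative $\GW$-groups being defined precisely so as to extend this into a fibration of spectra, again using $\tfrac12 \in \mc{A},\mc{B},\mc{C}$) upgrade this to the desired homotopy fibration of Grothendieck–Witt spectra $\GW^{[n]}(\mc{A}) \to \GW^{[n]}(\mc{B}) \to \GW^{[n]}(\mc{C})$. I would again cite the precise combination of \cite[theorem 6.6]{schlichting17} and \cite[theorem 8.10]{schlichting17}, the latter handling the spectrum-level (as opposed to space-level) assertion.

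The only real work is bookkeeping: matching the hypotheses of the present statement (``dg categories with weak equivalences and duality, all containing $\tfrac12$'', ``Morita exact'', ``quasi-exact'') to the exact phrasing used in \cite{schlichting17}, and checking that the $n$-shifted duality $[n]$ poses no extra difficulty—which it does not, since shifting the duality is an equivalence of the relevant structures and \cite{schlichting17} states its theorems for arbitrary shifted dualities. Thus this ``theorem'' is essentially a transcription of \cite[theorems 6.6 and 8.10]{schlichting17} into the notation of this paper, and the proof consists of pointing to those references together with the remark that $\tfrac12 \in \mc{A},\mc{B},\mc{C}$ is the hypothesis that makes $\GW$ and $\GGW$ behave as localizing invariants.
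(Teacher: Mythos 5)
Your proposal matches the paper exactly: the paper offers no proof of this theorem, stating only that it ``is a combination of \cite[theorem 6.6]{schlichting17} and \cite[theorem 8.10]{schlichting17},'' which are precisely the two references you invoke (theorem 6.6 for the $\GW$ fibration under quasi-exactness, theorem 8.10 for the $\GGW$ fibration under Morita exactness). Your additional remarks about space-versus-spectrum level are harmless bookkeeping; the substance of the argument is the same citation.
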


The second result is a general version of additivity for GW-theory, c.f. \cite[theorem 2.5]{walter03projective}, which allows for semi-orthogonal decompositions of arbitrary size.

\begin{theorem}[additivity for $\GW$] \label{theorem:generaladditivityforgw}
Let $(\mc{A},w,\vee)$ be a pretriangulated dg category with weak equivalences and duality equipped with a semi-orthogonal decomposition $\langle \mc{A}_{0}, \mc{A}_{1}, \dots, \mc{A}_r \rangle$, such that $\mc{A}_{i}^{\vee} \subset \mc{A}_{r-i}$.
\begin{enumerate}[label=(\roman*)]
    \item The duality $\vee: \mc{A}^{\op} \rightarrow \mc{A}$ induces equivalences $\mc{A}_i^{\op} \simeq \mc{A}_{r-i}$
    \item Suppose $r$ is odd. 
    Let $q = (r-1)/2$.
    Then the functor
    \begin{equation*}
        \begin{aligned}
            \prod\limits_{i = 0}^q H\mc{A}_i    & \longrightarrow   \mc{A} \\
            \prod\limits_{i = 0}^q (A_i, B_i)   & \longmapsto       \bigoplus\limits_{i=0}^q A_i \oplus B_i^{\vee},
        \end{aligned}
    \end{equation*}
    where $H\mc{A}_i$ is the hyperbolic category with duality associated to $\mc{A}_i$, induces a stable equivalence of spectra
    \begin{equation*}
        \bigoplus\limits_{i=0}^q \K(\mc{A}_i) \longrightarrow \GW^{[n]}(\mc{A}).
    \end{equation*}
    \item Suppose $r$ is even. 
    Let $q = r/2$. 
    Then the functor
    \begin{equation*}
        \begin{aligned}
            \mc{A}_q \times \prod\limits_{i = 0}^{q-1} H\mc{A}_i    & \longrightarrow   \mc{A} \\
            A_q \times \prod\limits_{i = 0}^{q-1} (A_i, B_i)        & \longmapsto       A_q \oplus \left(\bigoplus\limits_{i=0}^{q-1} A_i \oplus B_i^{\vee}\right)
        \end{aligned}
    \end{equation*}
    induces a stable equivalence of spectra
    \begin{equation*}
        \GW^{[n]}(\mc{A}_q) \oplus \bigoplus\limits_{i=0}^{q-1} \K(\mc{A}_i) \longrightarrow \GW^{[n]}(\mc{A}),
    \end{equation*}
    which in turn induces a stable equivalence of the homotopy fibers of the forgetful maps $F: \GW^{[n]}(\mc{A}) \rightarrow \K(\mc{A})$ and $F': \GW^{[n]}(\mc{A}_q) \rightarrow \K(\mc{A}_q)$. 
\end{enumerate}
\end{theorem}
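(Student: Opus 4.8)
\emph{Proof proposal.}
The plan is to prove all three parts by a single induction on the length $r$, with part (i) as the structural input. Part (i) itself is the uniqueness of semi-orthogonal decompositions: the strong duality $\vee\colon\mc{A}^{\op}\to\mc{A}$ is an exact equivalence which carries the semi-orthogonal decomposition $\langle\mc{A}_r^{\op},\dots,\mc{A}_0^{\op}\rangle$ of $\mc{A}^{\op}$ to a semi-orthogonal decomposition of $\mc{A}$ whose $i$-th term is $\mc{A}_{r-i}^{\vee}$; since $\mc{A}_{r-i}^{\vee}\subseteq\mc{A}_i$ by hypothesis, this decomposition is termwise contained in $\langle\mc{A}_0,\dots,\mc{A}_r\rangle$, hence equal to it by uniqueness, so $\mc{A}_{r-i}^{\vee}=\mc{A}_i$ and $\vee$ restricts to the claimed equivalences $\mc{A}_i^{\op}\simeq\mc{A}_{r-i}$.

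For (ii) and (iii) I induct on $r$. The base cases are $r=0$, where $\mc{A}=\mc{A}_0$ is self-dual and the functor of (iii) is the identity, and $r=1$, where part (i) gives $\mc{A}_0^{\op}\simeq\mc{A}_1$, so $(\mc{A},\vee)$ carries a ``hyperbolic'' semi-orthogonal decomposition and $\GW^{[n]}(\mc{A})\simeq\K(\mc{A}_0)$ realised by $(A,B)\mapsto A\oplus B^{\vee}$; this is case (ii) with $q=0$, and it is \cite[theorem 2.5]{walter03projective} in the underlying exact setting together with \cite{schlichting17} in the dg/spectral one. For the inductive step, suppose $r\ge 2$ and set $\mc{B}=\langle\mc{A}_1,\dots,\mc{A}_{r-1}\rangle$. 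Since $1\le r-i\le r-1$ whenever $1\le i\le r-1$, the hypothesis forces $\mc{B}$ to be closed under $\vee$, so $\mc{B}$ is again a pretriangulated dg category with weak equivalences and duality carrying a semi-orthogonal decomposition of length $r-2$ subject to the hypothesis of the theorem, with middle piece $\mc{A}_{r/2}$ if $r$ is even. Being a union of consecutive terms of a semi-orthogonal decomposition, $\mc{B}$ is admissible, so $\mc{B}\to\mc{A}\to\mc{A}/\mc{B}$ is split exact, in particular Morita exact and quasi-exact, and $\mc{A}/\mc{B}$ inherits a semi-orthogonal decomposition $\langle\overline{\mc{A}_0},\overline{\mc{A}_r}\rangle$ with $\overline{\mc{A}_0}\simeq\mc{A}_0$, $\overline{\mc{A}_r}\simeq\mc{A}_r$ and, by part (i), $\overline{\mc{A}_0}^{\vee}=\overline{\mc{A}_r}$. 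Theorem~\ref{theorem:localizationforgw} then provides a homotopy fibration
\begin{equation*}
    \GW^{[n]}(\mc{B})\lra\GW^{[n]}(\mc{A})\lra\GW^{[n]}(\mc{A}/\mc{B}),
\end{equation*}
and the $r=1$ case applied to $\mc{A}/\mc{B}$ identifies the right-hand term with $\K(\mc{A}_0)$.

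To conclude, I would split this fibration using the duality-preserving functor $H\mc{A}_0\to\mc{A}$, $(A,B)\mapsto A\oplus B^{\vee}$ — the $i=0$ summand of the functor in the statement. Its composite with the projection $\mc{A}\to\mc{A}/\mc{B}$ is again $(A,B)\mapsto A\oplus B^{\vee}$ on $H\mc{A}_0$, hence on Grothendieck--Witt spectra is the identity of $\K(\mc{A}_0)$ up to the identifications above; so $\K(\mc{A}_0)\simeq\GW^{[n]}(H\mc{A}_0)\to\GW^{[n]}(\mc{A})\to\K(\mc{A}_0)$ is an equivalence, the fibration splits, and $\GW^{[n]}(\mc{A})\simeq\GW^{[n]}(\mc{B})\oplus\K(\mc{A}_0)$. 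Feeding in the inductive hypothesis for $\mc{B}$ (case (iii) if $r$ is even, case (ii) if $r$ is odd) and composing all the functors involved with the duality-preserving inclusion $\mc{B}\hookrightarrow\mc{A}$ recovers precisely the functor in the statement, while the parity bookkeeping produces the asserted summands $\GW^{[n]}(\mc{A}_q)\oplus\bigoplus_{i=0}^{q-1}\K(\mc{A}_i)$ for $r=2q$ and $\bigoplus_{i=0}^{q}\K(\mc{A}_i)$ for $r=2q+1$. The last assertion of (iii) is obtained by carrying the forgetful maps $\GW^{[n]}(-)\to\K(-)$ along throughout the induction: at the inductive step the peeled-off summand $\K(\mc{A}_0)$ maps under the forgetful map into — and compatibly with — the K-theory summand it produces, so step by step the comparison of homotopy fibers reduces to that for $\mc{A}_q$.

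The homotopy-theoretic skeleton (localization plus a splitting) is routine; the part I expect to be most delicate is the bookkeeping around the quotient: verifying that $\mc{B}\to\mc{A}\to\mc{A}/\mc{B}$ is genuinely quasi-exact, so that the Grothendieck--Witt and not merely the Karoubi--Grothendieck--Witt version of Theorem~\ref{theorem:localizationforgw} applies; that the induced duality on $\mc{A}/\mc{B}$ is the expected hyperbolic one; and, above all, that the single explicit functor in the statement is compatible on the nose with the iterated splitting produced by the induction, together with its interaction with the forgetful maps.
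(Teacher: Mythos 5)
Your proposal is correct in outline but takes a genuinely different route from the paper. The paper does not induct on $r$: for (ii) it groups the pieces into the two blocks $\mc{A}_-=\langle\mc{A}_0,\dots,\mc{A}_q\rangle$ and $\mc{A}_+=\langle\mc{A}_{q+1},\dots,\mc{A}_r\rangle$, which are exchanged by the duality, applies additivity for $\GW$ \cite[proposition 6.8]{schlichting17} once to get $\K(\mc{A}_-)\simeq\GW^{[n]}(\mc{A})$, and then decomposes $\K(\mc{A}_-)$ by additivity for connective K-theory; for (iii) it uses the three-block decomposition $\langle\mc{A}_-,\mc{A}_q,\mc{A}_+\rangle$ with self-dual middle and cites \cite[theorem 3.5.6]{xie15} for $\GW^{[n]}(\mc{A}_q)\oplus\K(\mc{A}_-)\simeq\GW^{[n]}(\mc{A})$, after which the fiber comparison is the same three-by-two diagram you describe. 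Your induction replaces these block-level additivity inputs by repeated use of localization (theorem \ref{theorem:localizationforgw}) together with the length-two hyperbolic case, peeling off one dual pair $(\mc{A}_0,\mc{A}_r)$ at a time; this buys independence from the three-block additivity of Xie (your even case bottoms out at $\mc{A}_q$ alone), at the cost of the verifications you rightly flag: that the middle block $\mc{B}$ is closed under $\vee$ and admissible, that $\mc{A}/\mc{B}$ inherits the semi-orthogonal decomposition $\langle\overline{\mc{A}_0},\overline{\mc{A}_r}\rangle$ with the induced duality exchanging the two pieces, that the sequence is quasi-exact so the non-Karoubi form of localization applies, and that the composite $H\mc{A}_0\to\mc{A}\to\mc{A}/\mc{B}$ is the hyperbolic equivalence of the base case. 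All of these are standard facts about semi-orthogonal decompositions, and your splitting argument via the retraction onto $\K(\mc{A}_0)$ is sound. Your proof of (i), deducing the equality $\mc{A}_{r-i}^{\vee}=\mc{A}_i$ from termwise containment of two semi-orthogonal decompositions, is also fine and in fact slightly sharper than the paper's, which only observes that $\vee$ restricts.
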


\begin{proof}
For the proof of (i), note that $\mc{A}_i^{\vee} \subset \mc{A}_{r-i}$.
The equivalence $\vee: \mc{A}^{\op} \ra \mc{A}$ restricts to an equivalence $\vee: \mc{A}_{i}^{\op} \rightarrow \mc{A}_{r-i}$, since the dual of a map in $\mc{A}$ lies in $\mc{A}_{r-i}$ by assumption.

Next, (ii) will be proved.
Using the notation in the statement of the theorem, let $\mc{A}_- = \langle \mc{A}_0, \dots, \mc{A}_q \rangle$ and $\mc{A}_+ = \langle \mc{A}_{q+1}, \dots, \mc{A}_r \rangle$. 
Then $\mc{A} = \langle \mc{A}_-, \mc{A}_+ \rangle$ is a semi-orthogonal decomposition of $\mc{A}$, so there is an exact sequence of pretriangulated dg categories
\begin{equation*}
    \begin{tikzcd}[cramped]
        \mc{A}_- \arrow[r]
            & \mc{A} \arrow[r]
                & \mc{A}_+.
    \end{tikzcd}
\end{equation*}
By additivity \cite[proposition 6.8]{schlichting17} for $\GW$-theory, the hyperbolic functor of the statement of (ii) induces a stable equivalence of spectra
\begin{equation*}
    \begin{tikzcd}[cramped]
        \K(\mc{A}_-) \arrow[r, "\sim"]
            & \GW^{[n]}(\mc{A})
    \end{tikzcd}
\end{equation*}
Since $\langle \mc{A}_0, \dots, \mc{A}_q \rangle$ is a semi-orthogonal decomposition of $\mc{A}_-$, additivity for connective K-theory \cite[proposition 7.10]{blumberg13} yields an equivalence
\begin{equation*}
    \begin{tikzcd}[cramped]
        \bigoplus\limits_{i = 0}^q \K(\mc{A}) \arrow[r, "\sim"]
            & \K(\mc{A}_-)
    \end{tikzcd}
\end{equation*}
and the proof of (ii) is finished by composing these two equivalences.

Finally, (iii) will be proved.
Let $\langle \mc{A}_-, \mc{A}_0, \mc{A}_+ \rangle$ be the semi-orthogonal decomposition of $\mc{A}$ with $\mc{A}_- = \langle \mc{A}_0, \dots, \mc{A}_{q-1} \rangle$ and $\mc{A}_+ = \langle \mc{A}_{q+1}, \dots, \mc{A}_{r} \rangle$.
Then $\mc{A}_-^{\vee} = \mc{A}_+$ and $\mc{A}_0^{\vee} = \mc{A}_0$. 
Hence \cite[theorem 3.5.6]{xie15} yields a stable equivalence of spectra
\begin{equation*}
    \begin{tikzcd}[cramped]
        \GW^{[n]}(\mc{A}_q) \oplus \K(\mc{A}_-) \arrow[r, "\sim"]
            & \GW^{[n]}(\mc{A}).
    \end{tikzcd}
\end{equation*}
One obtains the desired stable equivalence of spectra with another application of the additivity of connective K-theory.
It remains to show that there is a stable equivalence of homotopy fibers $\hofib(F) \simeq \hofib(F')$.
There is a commutative diagram of spectra
\begin{equation*}
    \begin{tikzcd}
        \hofib(F' \oplus \id) \arrow[r] \arrow[d, dashed, "\sim"{sloped,anchor=south}]
            & \GW^{[n]}(\mc{A}_q) \oplus \K(\mc{A}_-) \arrow[d, "\sim"{sloped,anchor=south}] \arrow[r, "F' \oplus \id"]
                & \K(\mc{A}_q) \oplus \K(\mc{A}_-) \arrow[d, "\sim"{sloped,anchor=south}] \\
        \hofib(F) \arrow[r]
            & \GW^{[n]}(\mc{A}) \arrow[r, "F"]
                & \K(\mc{A})
    \end{tikzcd}
\end{equation*}
where the vertical arrows are stable equivalences.
Thus
\begin{equation*}
    \hofib(F) \simeq \hofib(F' \oplus \id) \simeq \hofib(F') \oplus \hofib(\id),
\end{equation*}
but $\hofib(\id)$ is contractible and the result follows.
\end{proof}

\subsection{Grothendieck-Witt spectra of projective bundles}
    \label{subsection:theprojectivebundleformulaforgrothendieckwittspectra}

In this final section, the formulae for the Grothendieck-Witt spectra of general projective bundles presented in the introduction are proven.
The general strategy is to use the standard semi-orthogonal decomposition of the dg category of perfect complexes of a projective bundle, demonstrate how that decomposition behaves with respect to the duality, and to study the constituent pieces of the decomposition.

Let $X$ be a quasi-compact quasi-separated scheme over $\Spec \Z[1/2]$ with the resolution property.
As noted before, schemes with an ample family of line bundles satisfy the resolution property.
Let $\mc{E}$ be a locally free sheaf of $\OO_X$-modules of rank $r+1$ and write $\P = \P(\mc{E})$ and $\OO = \OO_{\P}$.
Also set $s = \lceil r/2 \rceil$.
Let $\pi: \P \rightarrow X$ be the structure map. 
Write $\mc{A}$ for $\Perf(\P)$. 
Let $\mc{L}$ be a line bundle on $X$ and let $L = (\OO(m) \otimes \pi^*\mc{L})[0]$, with $m \in \Z$, be the object of $\Perf(\P)$ consisting of a single copy of $\OO(m) \otimes \pi^*\mc{L}$ concentrated in cohomological degree $0$.
Let $(\Perf(\P)^{[L]},w)$ be the pretriangulated dg category with weak equivalences the quasi-isomorphisms and duality given by the mapping complex $[-,L]$; write $\natural$ for this duality on $\mc{A}$ and reserve the symbol $\vee$ for the standard duality on $\Vect(X)$, $\Vect(\P)$ and their respective categories of perfect complexes.
By corollary \ref{corollary:equivalencesquareoflinebundle} and \cite[theorem 6.5]{schlichting17}, $\OO(m)$ may be replaced by $\OO(m + 2i)$ for any $i \in \Z$ without affecting the Grothendieck-Witt theory.
Therefore, $m$ can be chosen freely up to parity in the proofs below.

The following theorem is contained in the proof of \cite[theorem 3.5.1]{schlichting11}; a version of this result in the context of stable $\infty$-categories is \cite[theorem B]{khan18}.

\begin{theorem} \label{theorem:semiorthogonaldecompositionprojectivebundle}
The following statements hold.
\begin{enumerate}[label=(\roman*)]
    \item For each $k \in \Z$, the assignment $\mc{F} \mapsto p^*\mc{F} \otimes \OO(k)$ defines a fully faithful functor $\Perf(X) \rightarrow \Perf(\P)$.
    The essential image of such a functor will be denoted $\mc{A}(-k)$. 
    \item For each $i \in \Z$, $\langle \mc{A}(i-r), \dots, \mc{A}(i) \rangle$ is a semi-orthogonal decomposition of $\Perf(\P)$. 
\end{enumerate}
\end{theorem}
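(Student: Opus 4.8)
The plan is to reduce the statement to the classical theorem of Beĭlinson–Orlov–Thomason on the derived category of a projective bundle, already available in the literature as \cite[theorem 3.5.1]{schlichting11}, and to dress it up in the dg-categorical language used here. First I would prove (i). Fixing $k$, the functor $F_k \colon \mc{F} \mapsto \pi^*\mc{F} \otimes \OO(k)$ is exact and preserves perfect complexes since $\pi$ is flat of finite Tor-dimension (indeed smooth, $\P \to X$ being a projective bundle) and $\OO(k)$ is a line bundle; the resolution property on $X$ and the identity $\Perf(X) \simeq \sPerf(X)$ ensure we stay within the categories in play. Full faithfulness is the projection formula together with $R\pi_* \OO_{\P} = \OO_X$: concretely, $R\Hom_{\P}(\pi^*\mc{F}(k), \pi^*\mc{G}(k)) = R\Hom_{\P}(\pi^*\mc{F}, \pi^*\mc{G}) = R\Hom_X(\mc{F}, R\pi_*\pi^*\mc{G}) = R\Hom_X(\mc{F}, \mc{G})$, using $R\pi_*\pi^*\mc{G} \simeq \mc{G} \otimes R\pi_*\OO_{\P} \simeq \mc{G}$. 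This identifies $\mc{A}(-k)$ as an admissible subcategory equivalent to $\Perf(X)$.

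For (ii), by twisting by $\OO(-i)$ — which is an autoequivalence of $\Perf(\P)$ carrying $\mc{A}(j)$ to $\mc{A}(j-i)$ — it suffices to treat the case $i = 0$, i.e.\ to show $\langle \mc{A}(-r), \dots, \mc{A}(0)\rangle$ is a semi-orthogonal decomposition. I would verify the two defining conditions. For semi-orthogonality: for $0 \le a < b \le r$ one computes $R\Hom_{\P}(\pi^*\mc{F}(-a), \pi^*\mc{G}(-b)) = R\Hom_X(\mc{F}, \mc{G} \otimes R\pi_*\OO(a-b))$, and $R\pi_*\OO(a-b) = 0$ because $-r \le a - b \le -1$ lies in the range where the higher direct images of twists of the tautological bundle on a $\P^r$-bundle all vanish (this is \cite[section 4.6]{thomason90} or the standard cohomology-of-projective-space computation, applied fibrewise). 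This gives the required orthogonality between the blocks. For generation: the Koszul complex $K$ of the section $\pi^*\mc{E}^\vee \to \OO(-1)$ — equivalently the resolution recalled in the previous section with $K_{-j} = \Lambda^j\pi^*\mc{E}(-j)$, acyclic by \cite[section 4.6]{thomason90} — shows that $\OO$ lies in the triangulated subcategory generated by $\mc{A}(-1), \dots, \mc{A}(-r)$ together with itself, and twisting gives the analogous statement for every $\OO(k)$; since $\{\OO(k) : k \in \Z\}$ is ample on $\P$ (it is relatively ample and $X$ has an ample family), these objects generate $\Perf(\P)$, so the full subcategory generated by $\mc{A}(-r), \dots, \mc{A}(0)$ is everything. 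Combined with semi-orthogonality and (i), this is exactly the assertion that $\langle \mc{A}(-r), \dots, \mc{A}(0)\rangle$ is a semi-orthogonal decomposition.

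The only genuine subtlety — and the step I would be most careful about — is bookkeeping at the level of \emph{dg} categories rather than their homotopy categories: a semi-orthogonal decomposition of $\Perf(\P)$ in the dg sense requires the pieces $\mc{A}(k)$ to be the right sort of dg subcategories (full, closed under the relevant operations) and the filtration/gluing to be by dg functors, so that Theorem \ref{theorem:generaladditivityforgw} and Theorem \ref{theorem:localizationforgw} apply. Here I would simply invoke that \cite[theorem 3.5.1]{schlichting11} is already phrased (or whose proof already produces) the decomposition at the dg level — or cite \cite[theorem B]{khan18} for the $\infty$-categorical incarnation and transport it back — rather than reprove the formalism. The honest mathematical content is entirely the cohomology vanishing $R\pi_*\OO(j) = 0$ for $-r \le j \le -1$ and the Koszul resolution, both of which are standard and recalled in the excerpt; everything else is translation.
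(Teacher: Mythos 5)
The paper offers no proof of this theorem at all: it is simply quoted from \cite[theorem 3.5.1]{schlichting11} (with \cite[theorem B]{khan18} as an $\infty$-categorical reference), so your proposal --- the standard argument via the projection formula, the vanishing $R\pi_*\OO(j)=0$ for $-r\le j\le -1$, and generation via the Koszul resolution, deferring to the cited sources for the dg-level formalism --- is correct and consistent with what the paper does. The only blemishes are minor index/sign slips (e.g.\ $\Lambda^j\pi^*\mc{E}(-j)$ lies in $\mc{A}(j)$, not $\mc{A}(-j)$, in the paper's convention $\mc{A}(-k)=\{\pi^*\mc{F}\otimes\OO(k)\}$), which do not affect the substance.
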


\begin{proposition} \label{proposition:dualitydecompositionprojectivebundle}
With notation as in theorem \ref{theorem:semiorthogonaldecompositionprojectivebundle}, the essential image of $\mc{A}(k)$ under the duality $\natural$ is $\mc{A}(i-k)$ for all $k \in \Z$. 
\end{proposition}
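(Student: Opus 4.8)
The plan is to dualise a generic object of $\mc A(k)$ and read off the resulting twist. Recall from theorem \ref{theorem:semiorthogonaldecompositionprojectivebundle} that $\mc A(k)$ is the essential image of the fully faithful functor $\Perf(X)\to\Perf(\P)$, $\mc F\mapsto\pi^*\mc F\otimes\OO(-k)$. Since $L=(\OO(m)\otimes\pi^*\mc L)[0]$ is the degree-$0$ placement of a line bundle on $\P$, the double-dual transformation $\can^L$ is a natural isomorphism (cf.\ the discussion after Definition \ref{def:dualityonperf}), so $\natural=\vee_L$ is a contravariant self-equivalence of $\Perf(\P)$. Consequently it is enough to exhibit, naturally in $\mc F\in\Perf(X)$, an isomorphism $(\pi^*\mc F\otimes\OO(-k))^{\natural}\cong\pi^*\mc G\otimes\OO(-j)$ where $\mc G$ ranges over $\Perf(X)$ through an auto-equivalence; the integer $j$ is then the asserted index, and full faithfulness of $\natural$ upgrades the matching of essential images to an equality.

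For the computation I would chain three isomorphisms. First, because $\OO(-k)$ is invertible with inverse $\OO(k)$, the tensor--hom adjunction gives $(\pi^*\mc F\otimes\OO(-k))^{\natural}=[\pi^*\mc F\otimes\OO(-k),L]\cong[\pi^*\mc F,\OO(m+k)\otimes\pi^*\mc L]$. Second, corollary \ref{corollary:tensorcommuteswithhominsecondvariableonperf}, applied over $\P$ with $N=\OO(m+k)$ and line bundle $\pi^*\mc L$, pulls the twist out of the target: $[\pi^*\mc F,\OO(m+k)\otimes\pi^*\mc L]\cong\OO(m+k)\otimes[\pi^*\mc F,\pi^*\mc L]$. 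Third, since $\pi$ is flat and $\mc F$ is a bounded complex of finite locally free $\OO_X$-modules, the base-change map $\pi^*\sheafhom_X(\mc F,\mc L)\to\sheafhom_{\P}(\pi^*\mc F,\pi^*\mc L)$ is a degreewise isomorphism --- a claim local on $X$, hence reducible to the case of free modules. Composing, one obtains a natural isomorphism
\begin{equation*}
    (\pi^*\mc F\otimes\OO(-k))^{\natural}\;\cong\;\pi^*(\mc F^{\vee_{\mc L}})\otimes\OO(m+k),
\end{equation*}
with $\vee_{\mc L}=[-,\mc L]$ the duality on $\Perf(X)$; in particular the left-hand side lies in $\mc A(-m-k)$.

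Finally, since $\mc L$ is invertible the functor $\mc F\mapsto\mc F^{\vee_{\mc L}}$ is a contravariant self-equivalence of $\Perf(X)$, in particular essentially surjective. Hence $\natural$ sends $\mc A(k)$ into $\mc A(-m-k)$ and onto every object of it, and being the restriction of the fully faithful functor $\natural$ it is fully faithful; therefore the essential image of $\mc A(k)$ under $\natural$ is precisely $\mc A(-m-k)$, which is the claimed $\mc A(i-k)$ for $i=-m$. I expect the only genuine friction to be bookkeeping: keeping straight the sign convention of theorem \ref{theorem:semiorthogonaldecompositionprojectivebundle} (where $\mc A(-k)$ is the image of the $\OO(k)$-twist), and recognising that the index $i$ in the statement is forced to equal $-m$ by the choice of duality $L$; the three isomorphisms above are either formal consequences of the closed monoidal structure or are already recorded in corollary \ref{corollary:tensorcommuteswithhominsecondvariableonperf} together with flat base change.
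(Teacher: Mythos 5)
Your proof is correct and follows essentially the same route as the paper's: dualise a generator $\pi^*\mc F\otimes\OO(\pm k)$ of $\mc A(k)$ via the tensor--hom adjunction, corollary \ref{corollary:tensorcommuteswithhominsecondvariableonperf} and compatibility of $\pi^*$ with duality, then use that $\natural$ and $[-,\mc L]$ are equivalences to upgrade the inclusion of essential images to an equality. The only divergence is notational: you read $\mc A(-k)$ as the image of the $\OO(k)$-twist, as theorem \ref{theorem:semiorthogonaldecompositionprojectivebundle}(i) literally states, and therefore land on $\mc A(-m-k)$, whereas the paper's own proof (and its later uses, e.g.\ in lemma \ref{lemma:quotientofsemiorthogonaldecompositionisbase}) silently adopts the opposite convention $\mc A(k)=\{\pi^*M\otimes\OO(k)\}$ and obtains $\mc A(m-k)$ --- a sign inconsistency in the paper rather than a gap in your argument.
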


\begin{proof}
First note that $\pi^*$ can be made into a dg form functor (cf. \cite[section 9.3]{schlichting17}, which discusses the functoriality of $\pi^*$).
Any object of $\mc{A}(k)$ can be written as $\pi^*M \otimes \OO(k)$ with $M \in \Perf(X)$. 
The dual of such an object satisfies
\begin{equation*}
    \begin{aligned}
        (\pi^*M \otimes \OO(k))^{\natural}
            & \cong [\pi^*M \otimes \OO(k), L] \\
            & \cong [\pi^*M, [\OO(k),L]] \\
            & \cong L \otimes [\pi^*M, \OO(-k)] \\
            & \cong \OO(m) \otimes \pi^*\mc{L}[0] \otimes \pi^*[M, \OO_X] \otimes \OO(-k) \\
            & \cong \pi^*[M, \mc{L}[0]] \otimes  \OO(m-k),
    \end{aligned}
\end{equation*}
where the isomorphisms are given by various results of section \ref{subsection:dualityonthecategoryofperfectcomplexes}, as well as standard properties of the pullback $\pi^*$.
Hence $(\pi^*M \otimes \OO(k))^{\natural}$ is an object of $\mc{A}(m-k)$.
It follows that $\mc{A}(k)^{\natural} \subset \mc{A}(m-k)$ and $\mc{A}(m-k)^{\natural} \subset \mc{A}(k)$.
As $\natural$ is an equivalence, the proof is done.
\end{proof}

The following theorem is main theorem \ref{maintheorem:projectivebundleformulawithtwistofsameparityasdimension}, the projective bundle formula for $m$ and $r$ of equal parity.
Its proof is an application of additivity for Grothendieck-Witt spectra.

\begin{theorem}[Theorem \ref{maintheorem:projectivebundleformulawithtwistofsameparityasdimension}]
\label{theorem:projectivebundleformulawithtwistofsameparityasdimension}
Recall that $s = \lceil r/2 \rceil$.
The following statements hold.
\begin{enumerate}[label=(\roman*)]
    \item If $m$ and $r$ are even, then there is a stable equivalence of spectra
    \begin{equation*}
        \begin{tikzcd}[cramped]
            \GW^{[n]}(X, \mc{L}) \oplus \K(X)^{\oplus s} \arrow[r]
                & \GW^{[n]}(\P, \pi^*\mc{L}(m)).
        \end{tikzcd}    
    \end{equation*}
    \item If $m$ and $r$ are odd, then there is a stable equivalence of spectra
    \begin{equation*}
        \begin{tikzcd}[cramped]
            \K(X)^{\oplus s} \arrow[r]
                & \GW^{[n]}(\P, \pi^*\mc{L}(m)).
        \end{tikzcd}    
    \end{equation*}
\end{enumerate}
\end{theorem}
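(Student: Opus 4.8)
The plan is to deduce both statements from the general additivity theorem \ref{theorem:generaladditivityforgw}, applied to the standard semi-orthogonal decomposition of $\mc{A}=\Perf(\P)$ provided by theorem \ref{theorem:semiorthogonaldecompositionprojectivebundle}, after choosing the twist so that the decomposition is stable and order-reversing under the duality $\natural$. Concretely, theorem \ref{theorem:semiorthogonaldecompositionprojectivebundle}(ii) gives a semi-orthogonal decomposition $\langle \mc{A}(i-r),\dots,\mc{A}(i)\rangle$ of $\mc{A}$ for every $i\in\Z$; set $\mc{A}_j=\mc{A}(i-r+j)$ for $0\le j\le r$, so that $\langle\mc{A}_0,\dots,\mc{A}_r\rangle$ has the length required by theorem \ref{theorem:generaladditivityforgw}. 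By proposition \ref{proposition:dualitydecompositionprojectivebundle} the duality $\natural$ sends $\mc{A}(k)$ to $\mc{A}(m-k)$, so $\mc{A}_j^{\natural}=\mc{A}(m-i+r-j)$, which equals $\mc{A}_{r-j}=\mc{A}(i-j)$ exactly when $2i=m+r$. Since $m$ and $r$ have the same parity, $i=(m+r)/2$ is an integer, and for this choice the hypothesis $\mc{A}_j^{\natural}\subset\mc{A}_{r-j}$ of theorem \ref{theorem:generaladditivityforgw} is satisfied. As $X$, hence $\P$, lies over $\Spec\Z[1/2]$, all categories involved contain $\tfrac12$, so theorem \ref{theorem:generaladditivityforgw} applies to $(\mc{A},w,\natural)$ with this decomposition.

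Next I would identify the constituent pieces. Each $\mc{A}_j$ is equivalent to $\Perf(X)$ by theorem \ref{theorem:semiorthogonaldecompositionprojectivebundle}(i), giving $\K(\mc{A}_j)\simeq\K(X)$. When $r$ is even, the only piece carrying a duality is the central one, $\mc{A}_q$ with $q=r/2$, which equals $\mc{A}(m/2)$ and is $\natural$-self-dual by the first paragraph. Here I would check that the equivalence $\Perf(X)\simeq\mc{A}_q$ upgrades to an equivalence of pretriangulated dg categories with duality $(\Perf(X),\vee_{\mc{L}})\simeq(\mc{A}_q,\natural)$: this amounts to verifying that the chain of natural isomorphisms in the proof of proposition \ref{proposition:dualitydecompositionprojectivebundle} computing $(\pi^*M\otimes\OO(m/2))^{\natural}$, together with the double-dual identities \eqref{equation:dualityidentities} and the fact (\cite[section 9.3]{schlichting17}) that $\pi^*$ and tensoring by $\OO(m/2)$ are dg form functors, is compatible with the canonical double-dual maps up to the correct signs. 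Using corollary \ref{corollary:equivalencesquareoflinebundle} and \cite[theorem 6.5]{schlichting17} to normalise $m$ to $0$, this reduces to the statement that $\pi^*\colon(\Perf(X),\vee_{\mc{L}})\to(\Perf(\P),\vee_{\pi^*\mc{L}})$ is a fully faithful dg form functor whose image is $\mc{A}_q$. Granting this, $\GW^{[n]}(\mc{A}_q)\simeq\GW^{[n]}(X,\mc{L})$.

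Finally I would invoke theorem \ref{theorem:generaladditivityforgw}. If $r$ is odd, part (ii) with $q=(r-1)/2$ gives a stable equivalence $\bigoplus_{j=0}^{q}\K(\mc{A}_j)\simeq\GW^{[n]}(\mc{A})$; since $q+1=(r+1)/2=s$ and $\GW^{[n]}(\mc{A},\natural)=\GW^{[n]}(\P,\pi^*\mc{L}(m))$ by definition, this is exactly statement (ii). If $r$ is even, part (iii) with $q=r/2$ gives $\GW^{[n]}(\mc{A}_q)\oplus\bigoplus_{j=0}^{q-1}\K(\mc{A}_j)\simeq\GW^{[n]}(\mc{A})$; the direct sum has $q=s$ copies of $\K(X)$, and $\GW^{[n]}(\mc{A}_q)\simeq\GW^{[n]}(X,\mc{L})$, so this is statement (i). The main obstacle is the duality-compatible identification of the central piece in the even case: one must confirm that the string of natural isomorphisms of proposition \ref{proposition:dualitydecompositionprojectivebundle} assembles into a \emph{form} functor, not merely an equivalence of underlying dg categories — everything else is parity bookkeeping or a direct appeal to theorems \ref{theorem:semiorthogonaldecompositionprojectivebundle} and \ref{theorem:generaladditivityforgw}.
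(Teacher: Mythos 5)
Your proposal is correct and takes essentially the same route as the paper: the paper normalises $m$ to $-r$ (using corollary \ref{corollary:equivalencesquareoflinebundle}) and applies theorem \ref{theorem:generaladditivityforgw} to the decomposition $\langle\mc{A}(-r),\dots,\mc{A}(0)\rangle$ with the duality behaviour from proposition \ref{proposition:dualitydecompositionprojectivebundle}, which is your choice $i=(m+r)/2$ after the normalisation. Your extra care in checking that the central piece is identified with $(\Perf(X),\vee_{\mc{L}})$ as a category \emph{with duality} is a point the paper leaves implicit, and is a worthwhile addition rather than a deviation.
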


\begin{proof}
Without loss of generality, assume that $m = -r$. 
By theorem \ref{theorem:semiorthogonaldecompositionprojectivebundle}, there is a semi-orthogonal decomposition 
\begin{equation*}
    \langle \mc{A}(-r), \mc{A}(-r+1), \dots, \mc{A}(0) \rangle
\end{equation*}
of $\mc{A}$.
The duality maps $\mc{A}(i)$ to $\mc{A}(-r-i)$ for all $i \in \Z$ by proposition \ref{proposition:dualitydecompositionprojectivebundle}.
Hence the additivity theorem \ref{theorem:generaladditivityforgw} applies and yields the desired result.
\end{proof}

This covers half of the cases of the projective bundle formula. 
Now set $m = -r-1$.
By theorem \ref{theorem:semiorthogonaldecompositionprojectivebundle}, there is a semi-orthogonal decomposition 
\begin{equation*}
    \langle \mc{A}(-r-1), \mc{A}(-r), \dots, \mc{A}(-1) \rangle
\end{equation*}
of $\mc{A}$.
By proposition \ref{proposition:dualitydecompositionprojectivebundle}, $\mc{A}(i)^{\natural} \subset \mc{A}(-r-1-i)$ for all $i \in \Z$. 
Let
\begin{equation*}
    \mc{A}_0 = \langle \mc{A}(-r), \dots, \mc{A}(-1) \rangle.
\end{equation*}
Then the constituents of $\mc{A}_0$ are exchanged by the duality, so that $\GW(\mc{A}_0)$ may be computed using the additivity theorem \ref{theorem:generaladditivityforgw}.
Furthermore, there is a quasi-exact sequence of pretriangulated dg categories with duality
\begin{equation} \label{diagram:quasiexactsequenceforperf}
    \begin{tikzcd}[cramped]
        \mc{A}_0^{[L]} \arrow[r]
            & \mc{A}^{[L]} \arrow[r]
                & (\mc{A}/\mc{A}_0)^{[L]}.
    \end{tikzcd}    
\end{equation}
Thus understanding $\GW^{[n]}((\mc{A}/\mc{A}_0)^{[L]})$ is paramount to understanding $\GW^{[n]}(\mc{A}^{[L]})$.
Fix $\Delta = \det \pi^*\mc{E}(m)$.

\begin{lemma} \label{lemma:quotientofsemiorthogonaldecompositionisbase}
There is a quasi-equivalence of pretriangulated dg categories with weak equivalences
\begin{equation*}
    F: \Perf(X) \longrightarrow \mc{A}/\mc{A}_0
\end{equation*}
given by $M \mapsto \Delta[r] \otimes \pi^*M$. 
\end{lemma}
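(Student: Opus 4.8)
The plan is to exhibit $F$ as an explicit dg functor and then check it is a quasi-equivalence by identifying the quotient $\mc{A}/\mc{A}_0$ with a familiar piece. First I would observe that, by Theorem~\ref{theorem:semiorthogonaldecompositionprojectivebundle}, we have the semi-orthogonal decomposition $\mc{A} = \langle \mc{A}(-r-1), \mc{A}_0 \rangle$, where $\mc{A}_0 = \langle \mc{A}(-r), \dots, \mc{A}(-1) \rangle$. A standard fact about semi-orthogonal decompositions (in the dg or triangulated setting) is that the quotient $\mc{A}/\mc{A}_0$ is equivalent, via the localization functor, to the left-most component $\mc{A}(-r-1)$; more precisely, the composition $\mc{A}(-r-1) \hookrightarrow \mc{A} \to \mc{A}/\mc{A}_0$ is a quasi-equivalence. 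Thus it suffices to identify $\mc{A}(-r-1)$ with $\Perf(X)$ via the prescribed formula. By Theorem~\ref{theorem:semiorthogonaldecompositionprojectivebundle}(i) with $k = -r-1$, the assignment $M \mapsto \pi^*M \otimes \OO(-r-1)$ is a fully faithful functor $\Perf(X) \to \Perf(\P)$ with essential image $\mc{A}(-r-1)$. Now recall $\Delta = \det\pi^*\mc{E}(m) = \det\pi^*\mc{E}(-r-1)$ (using the normalization $m = -r-1$ fixed just before the lemma), so $\Delta[r] \otimes \pi^*M \cong \pi^*(\det\mc{E} \otimes M) \otimes \OO(-r-1)[r]$, which lies in $\mc{A}(-r-1)$ up to the harmless shift by $[r]$.

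The key steps, in order, are: (1) record the semi-orthogonal decomposition $\mc{A} = \langle \mc{A}(-r-1), \mc{A}_0\rangle$ from Theorem~\ref{theorem:semiorthogonaldecompositionprojectivebundle}(ii) with $i = -1$; (2) invoke the general principle that for a semi-orthogonal decomposition $\langle \mc{B}, \mc{C}\rangle$ the inclusion of the admissible subcategory $\mc{B}$ followed by the Verdier quotient by $\mc{C}$ is a quasi-equivalence $\mc{B} \xrightarrow{\sim} \mc{A}/\mc{C}$ --- this follows because the right orthogonal to $\mc{C}$ in $\mc{A}$ is $\mc{B}$ and the quotient functor restricted to that right orthogonal is a quasi-equivalence (cf.\ the standard theory of Bousfield localization, or Schlichting's treatment of exact sequences of dg categories); (3) compose with the quasi-equivalence $\Perf(X) \xrightarrow{\sim} \mc{A}(-r-1)$ from Theorem~\ref{theorem:semiorthogonaldecompositionprojectivebundle}(i), precomposed with the auto-equivalence $M \mapsto \det\mc{E}\otimes M[r]$ of $\Perf(X)$ (an equivalence since $\det\mc{E}$ is invertible and $[r]$ is a shift), to obtain exactly $M \mapsto \Delta[r]\otimes\pi^*M$; (4) check this composite agrees on the nose with the stated formula, which is immediate from the projection formula $\pi^*M \otimes \pi^*\det\mc{E} \cong \pi^*(\det\mc{E}\otimes M)$ and the definition of $\Delta$.

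I expect step (2) to be the main obstacle to write cleanly: one needs the right statement relating a semi-orthogonal decomposition to a Verdier/Drinfeld quotient of dg categories, and to know that $F$ as literally written (not just its triangulated shadow) is a quasi-equivalence of \emph{pretriangulated dg categories with weak equivalences}, i.e.\ induces an equivalence on homotopy categories after idempotent completion. The cleanest route is probably to cite the fact --- already used implicitly in the excerpt, e.g.\ around diagram~\eqref{diagram:quasiexactsequenceforperf} --- that a semi-orthogonal decomposition $\langle \mc{A}_0, \mc{A}_1 \rangle$ gives a quasi-exact (in particular exact up to factors) sequence $\mc{A}_0 \to \mc{A} \to \mc{A}/\mc{A}_0$ with $\mc{A}/\mc{A}_0 \simeq \mc{A}_1$, and this is exactly the content of Theorem~\ref{theorem:semiorthogonaldecompositionprojectivebundle} combined with the general formalism of \cite{schlichting11}. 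Everything else is bookkeeping with the twist $\OO(-r-1)$, the shift $[r]$, and the determinant line bundle.
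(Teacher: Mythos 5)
Your proposal is correct and follows essentially the same route as the paper: identify the essential image of $M \mapsto \Delta[r]\otimes\pi^*M$ with $\mc{A}(-r-1)$ using the self-equivalence of $\Perf(X)$ given by twisting with $\det\mc{E}$ and shifting, then use that the quotient map $\mc{A}(-r-1)\to\mc{A}/\mc{A}_0$ is a quasi-equivalence because $\langle\mc{A}(-r-1),\mc{A}_0\rangle$ is a semi-orthogonal decomposition. The only cosmetic remark is that $\mc{A}(-r-1)$ is already closed under shifts, so the shift by $[r]$ needs no ``up to'' qualification.
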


\begin{proof}
The functor $F$ is the composition
\begin{equation*}
    \begin{tikzcd}
        \Perf(X) \arrow[r, "\pi^*"]
            & \mc{A} \arrow[r, "{\Delta[r] \otimes -}"]
                &[1em] \mc{A} \arrow[r]
                    & \mc{A}/\mc{A}_0.
    \end{tikzcd}
\end{equation*}
Denote the composition $\Perf(X) \rightarrow \mc{A}$ of the first two maps by $F'$.
Then
\begin{align*}
    F'(\det \mc{E}^{\vee}[-r])
        & = \Delta[r] \otimes \pi^*\det \mc{E}^{\vee}[-r] \\
        & \cong \det \pi^*\mc{E}[r] \otimes \pi^*\det \mc{E}^{\vee}[-r] \otimes \OO(m) \\
        & \cong \OO(m).
\end{align*}
As tensoring with $\det \mc{E}^{\vee}[-r]$ gives a self-equivalence of $\Perf(X)$, the essential image of $F'$ consists of objects of the form $\pi^*M \otimes \OO(m)$ and is therefore the subcategory $\mc{A}(-r-1)$ of $\mc{A}$ of theorem \ref{theorem:semiorthogonaldecompositionprojectivebundle}.
In particular, $F': \Perf(X) \rightarrow \mc{A}(-r-1)$ is a quasi-equivalence.
Hence $F$ factors through the canonical map $F'': \mc{A}(-r-1) \rightarrow \mc{A}/\mc{A}_0$.
Since $\langle \mc{A}(-r-1), \mc{A}_0 \rangle$ is a semi-orthogonal decomposition of $\mc{A}$, it follows that $F''$ is a quasi-equivalence.
Therefore $F$, being a composition of quasi-equivalences, is also a quasi-equivalence, as was to be shown.
\end{proof}

\begin{lemma} \label{lemma:koszulsymmetricformisquasiisomorphisminquotient}
Fix an $-r-1 \leq \ell \leq -1$ as in section \ref{subsection:constructingsymmetricformsfromkoszulcomplexes}. 
The symmetric form $\phi: M[-1] \rightarrow [M[-1], \Delta[r]]$
of \ref{proposition:symmetricformkoszulcomplexprojectivebundle} becomes a quasi-isomorphism
\begin{equation*}
    \bar{\phi}: \Delta[r] \rightarrow \OO    
\end{equation*}
in $(\mc{A}/\mc{A}_0)^{[\Delta[r]]}$, which is independent from the choice of $\ell$.
In particular, $\bar{\phi}$ defines an element of $\GW^{[r]}_0((\mc{A}/\mc{A}_0)^{[\Delta]})$.
\end{lemma}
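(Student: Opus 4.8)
The plan is to analyze what happens to the symmetric form $\phi$ from proposition \ref{proposition:symmetricformkoszulcomplexprojectivebundle} after passing to the Verdier quotient $\mc{A}/\mc{A}_0$, using the quasi-equivalence $F$ of lemma \ref{lemma:quotientofsemiorthogonaldecompositionisbase}. First I would recall that $M = K_{\leq \ell}$ is the naive truncation of the Koszul complex in degrees $[-r-1, \ell]$, and that in the quotient $\mc{A}/\mc{A}_0$ the subcategory $\mc{A}_0 = \langle \mc{A}(-r), \dots, \mc{A}(-1)\rangle$ is killed. Since each $K_{-i} = \Lambda^i\pi^*\mc{E}(-i)$ for $1 \leq i \leq r$ lies in $\mc{A}(i)$ with $1 \leq i \leq r$ — hence in $\mc{A}_0$ — all the terms of $M[-1]$ except the bottom one $K_{-r-1}[-1] = \pi^*\det\mc{E}(-r-1)[-1]$ become zero in the quotient, and that bottom term is $\Delta[-1]$ up to the shift convention, so $M[-1]$ maps to $\Delta[r]$ in $\mc{A}/\mc{A}_0$ after accounting for the degree shift built into $H = M$ and the $[r]$-twist. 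Dually, $[M[-1], \Delta[r]]$ collapses to $\OO$. So $\phi$ induces a morphism $\bar\phi: \Delta[r] \to \OO$, and being an image of a symmetric form under a form functor it is symmetric; that it is the map $\Delta[r] \to \OO$ rather than some other chain map follows by tracking the explicit formula $x \mapsto (y \mapsto (-1)^{|x|}\varphi(x\otimes y))$ on the surviving summand, where $\varphi$ involves $d\iota \otimes \iota$ and the Koszul multiplication $\mu$.

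Next I would verify that $\bar\phi$ is a quasi-isomorphism in $(\mc{A}/\mc{A}_0)^{[\Delta[r]]}$. Working in the quotient, a morphism is a quasi-isomorphism precisely when its cone lies in $\mc{A}_0$. The cone of $\phi$ in $\mc{A}$ is (up to the alterations in the odd case) the Koszul complex $K$ — or in the even case exactly $K$ by proposition \ref{proposition:evenprojectivebundlecutskoszulintwo} and the surrounding discussion — which is acyclic, hence certainly lies in $\mc{A}_0$ after quotienting since all its nontrivial terms $K_{-1}, \dots, K_{-r}$ generate subcategories inside $\mc{A}_0$ and $K_{-r-1}, K_0$ cancel against the source and target. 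Actually the cleanest route is: $C(\bar\phi) = \bar{C(\phi)}$, and $C(\phi)$ differs from the acyclic complex $K$ only by summands supported in $\mc{A}_0$ (the $K_{-s}[s]$ correction in the odd case lies in $\mc{A}(s) \subset \mc{A}_0$), so $C(\bar\phi)$ is the image of an acyclic complex, hence zero in the derived category of the quotient; therefore $\bar\phi$ is a quasi-isomorphism. Equivalently one observes $K$ itself is acyclic so it is the zero object of $\mc{A}$ up to quasi-isomorphism, but its naive truncations split it as a cone, and that cone relation descends.

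For independence from $\ell$: changing $\ell$ changes $M = K_{\leq \ell}$ only by terms $K_{-i}$ with $i \leq r$, all of which lie in $\mc{A}_0$ and hence vanish in the quotient; so the image of $(M[-1], \phi)$ in $(\mc{A}/\mc{A}_0)^{[\Delta[r]]}$ is canonically $\bar\phi: \Delta[r] \to \OO$ regardless of the truncation point. One should check the induced symmetric forms agree on the nose, not merely up to isometry — this follows because the two truncations $K_{\leq \ell}$ and $K_{\leq \ell'}$ fit into an obvious map of complexes inducing an isomorphism in the quotient compatible with the forms $\varphi$, both restricting to the same wedge-multiplication data on the bottom term. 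Finally, since $\bar\phi$ is a symmetric quasi-isomorphism in $(\mc{A}/\mc{A}_0)^{[\Delta[r]]}$, and $\Delta = \det\pi^*\mc{E}(m)$ with $[\Delta[r]]$ the relevant duality, it represents a class in $\GW^{[r]}_0((\mc{A}/\mc{A}_0)^{[\Delta]})$ by definition of the Grothendieck-Witt group as the group completion of symmetric spaces up to isometry.

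I expect the main obstacle to be bookkeeping of the various shifts — the $[-1]$ in $M[-1]$, the $[r+2]$ versus $[r]$ in the codomain of $\varphi$ versus $\phi$ coming from remark \ref{remark:skewsymmetrictosymmetric}, and the grading convention on $K$ where cohomological degree equals wedge-degree — to confirm that the surviving map is exactly $\Delta[r] \to \OO$ with the claimed duality $[\Delta[r]]$, and to make the independence-of-$\ell$ claim precise at the level of symmetric forms rather than merely quasi-isomorphism classes. The conceptual content — ``everything but the top of the Koszul complex dies in the quotient, and what's left is the acyclicity of $K$'' — is straightforward; the risk is a sign or shift error in identifying $\bar\phi$ precisely.
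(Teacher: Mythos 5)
Your argument is essentially the paper's own proof: the middle terms of $C(\phi)$ are twists $\OO(i)$ with $-r \le i \le -1$, hence lie in $\mc{A}_0$ and vanish in the quotient, so $\overline{C(\phi)}$ coincides with the image of the acyclic Koszul complex $K$, and exactness of the quotient functor gives the quasi-isomorphism and the independence of $\ell$. The only loose point is the aside that $C(\phi)$ ``differs from $K$ by direct summands in $\mc{A}_0$'' --- for general $\ell$ there is no such splitting in $\mc{A}$ (that description is specific to $\ell=-s$) --- but your primary termwise argument does not rely on it.
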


\begin{proof}
The underlying graded $\OO$-module of the cone $C(\phi)$ of $\phi$ is given by $M \oplus [M[-1], \Delta[r]]$.
Let $-r \leq i \leq -1$.
Then $M_i = \Lambda^{-i}\pi^*\mc{E} \otimes \OO(i)$ if $i \leq \ell$ and $M_i = 0$ otherwise.
As $\OO(i) \in \mc{A}_0$, it follows that $C(\phi)_i = 0$.
Hence the image of $C(\phi)$ in $\mc{A}/\mc{A}_0$ is 
\begin{equation*}
    \begin{tikzcd}
        \overline{C(\phi)}: 
            &[-2em] K_{-r-1} \arrow[r]
                & 0 \arrow[r]
                    & \ldots \arrow[r]
                        & 0 \arrow[r]
                            & K_0,
    \end{tikzcd}
\end{equation*}
which corresponds to the image $\overline{K}$ of the Koszul complex in $\mc{A}/\mc{A}_0$, and is independent from the choice of $\ell$.   
As $K$ is acyclic and the quotient functor $\mc{A} \rightarrow \mc{A}/\mc{A}_0$ is exact, it follows that $\overline{C(\phi)}$ is acyclic as well.
Hence the image $\bar{\phi}: \Delta[r] \rightarrow \OO$ of $\phi$ in $\mc{A}/\mc{A}_0$ is a quasi-isomorphism, as was to be shown.
\end{proof}

\begin{proposition} \label{proposition:dgformfunctorbasetoquotientofsemiorthogonaldecomposition}
The quasi-equivalence $F: \Perf(X) \rightarrow \mc{A}/\mc{A}_0$ of proposition \ref{lemma:quotientofsemiorthogonaldecompositionisbase} can be made into a dg form functor
\begin{equation*}
    (F, \eta): \Perf(X)^{[0]} \rightarrow (\mc{A}/\mc{A}_0)^{[\Delta[r]]}.
\end{equation*}
In particular, there is a quasi-equivalence of pretriangulated dg categories with weak equivalences and duality
\begin{equation*}
    \Perf(X)^{[\det \mc{E}^{\vee} \otimes \mc{L}[-r]]} \simeq (\mc{A}/\mc{A}_0)^{[L]}.
\end{equation*}
\end{proposition}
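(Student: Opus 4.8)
The plan is to build $(F,\eta)$ as a composite of three dg form functors, each already known to be one, so that the form‑functor axiom and essential surjectivity are inherited automatically. The first piece is the pullback, which for any invertible complex $A$ on $X$ is a dg form functor $\pi^*\colon\Perf(X)^{[A]}\to\Perf(\P)^{[\pi^*A]}$ whose duality compatibility morphisms are the canonical isomorphisms $\pi^*[M,A]\xrightarrow{\sim}[\pi^*M,\pi^*A]$ (the content of \cite[section 9.3]{schlichting17}, already used in the proof of proposition \ref{proposition:dualitydecompositionprojectivebundle}). The second is the functor $(M[-1],\phi)\otimes-\colon\Perf(\P)^{[L_2]}\to\Perf(\P)^{[\Delta[r]\otimes L_2]}$ produced by proposition \ref{proposition:tensoringbysymmetricformisformfunctor} out of the symmetric form $\phi$ of proposition \ref{proposition:symmetricformkoszulcomplexprojectivebundle}, valid for any invertible $L_2$. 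The third is the quotient functor $\Perf(\P)^{[L]}\to(\mc{A}/\mc{A}_0)^{[L]}$, which is a strict dg form functor because $\mc{A}_0$ is stable under $\natural$ — exactly the fact underlying the quasi-exact sequence (\ref{diagram:quasiexactsequenceforperf}), a consequence of proposition \ref{proposition:dualitydecompositionprojectivebundle} and the choice $m=-r-1$; the same holds with $[-,L]$ replaced by $[-,\Delta[r]]$, the two duality objects differing only by a pullback line bundle and a shift.

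Next I would record the identity
\begin{equation*}
    L \;=\; (\OO(m)\otimes\pi^*\mc{L})[0] \;\cong\; \Delta[r]\otimes\pi^*(\det\mc{E}^\vee\otimes\mc{L})[-r],
\end{equation*}
which holds because $\Delta=\det\pi^*\mc{E}(m)$. Composing the three functors above with $L_2=\pi^*(\det\mc{E}^\vee\otimes\mc{L})[-r]$ gives a dg form functor $\Perf(X)^{[\det\mc{E}^\vee\otimes\mc{L}[-r]]}\longrightarrow(\mc{A}/\mc{A}_0)^{[L]}$, factoring through $\Perf(\P)^{[\pi^*(\det\mc{E}^\vee\otimes\mc{L})[-r]]}$ and $\Perf(\P)^{[L]}$, whose underlying functor sends $N$ to the image of $M[-1]\otimes\pi^*N$ in $\mc{A}/\mc{A}_0$. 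Since every term of the naively truncated Koszul complex $M=K_{\leq\ell}$ other than $K_{-r-1}=\Delta$ lies in $\mc{A}_0$ (precisely the computation in the proof of lemma \ref{lemma:koszulsymmetricformisquasiisomorphisminquotient}), this image collapses naturally to $\Delta[r]\otimes\pi^*N=F(N)$, so the composite is a model of the quasi-equivalence $F$ of lemma \ref{lemma:quotientofsemiorthogonaldecompositionisbase} carrying induced duality compatibility morphisms $\eta$; this proves the ``in particular''. Running the same construction with $L_2=\OO[0]$ instead gives the dg form functor $(F,\eta)\colon\Perf(X)^{[0]}\to(\mc{A}/\mc{A}_0)^{[\Delta[r]]}$ of the first assertion.

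What remains is to see that these $\eta$ are quasi-isomorphisms, for then $(F,\eta)$ — already a quasi-equivalence of underlying pretriangulated dg categories with weak equivalences by lemma \ref{lemma:quotientofsemiorthogonaldecompositionisbase} — is a quasi-equivalence of pretriangulated dg categories with weak equivalences and duality. The compatibility morphisms of the pullback are isomorphisms and those of the quotient functor are identities, so only the middle functor matters: at $N$ it is $\phi\otimes\id$ followed by the natural isomorphism of corollary \ref{corollary:shifteddualitynaturalisomorphism}, and although $\phi$ is not a quasi-isomorphism in $\Perf(\P)$, it becomes one in $\mc{A}/\mc{A}_0$ by lemma \ref{lemma:koszulsymmetricformisquasiisomorphisminquotient} — which is exactly why the quotient must be taken before the $\eta$ can be inverted, and why the argument is organized in this order. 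The genuinely non‑trivial input is thus lemma \ref{lemma:koszulsymmetricformisquasiisomorphisminquotient}; I expect the remaining work to be bookkeeping: checking that the shifts and twists conspire so that the middle functor lands in $\Perf(\P)^{[L]}$ on the nose (the heart of which is the displayed identity), that $\mc{A}_0$ really is $\natural$‑stable for this value of $m$, and that the collapse $\overline{M[-1]\otimes\pi^*N}\simeq F(N)$ is natural in $N$, so that what is produced is indeed the functor $F$ of lemma \ref{lemma:quotientofsemiorthogonaldecompositionisbase}.
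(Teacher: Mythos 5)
Your proposal is correct and follows essentially the same route as the paper: writing $F$ as the composite of the form functors $\pi^*$, $(M[-1],\phi)\otimes-$ (via proposition \ref{proposition:tensoringbysymmetricformisformfunctor}), and the $\natural$-compatible quotient functor, with lemma \ref{lemma:koszulsymmetricformisquasiisomorphisminquotient} supplying the key fact that the duality compatibility morphisms become quasi-isomorphisms in $\mc{A}/\mc{A}_0$. You are in fact somewhat more explicit than the paper about why the composite recovers $F$ and why the ``in particular'' is a quasi-equivalence of categories \emph{with duality}, but this is filling in details rather than a different argument.
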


\begin{proof}
By lemma \ref{lemma:koszulsymmetricformisquasiisomorphisminquotient}, $F$ can be written as the composition
\begin{equation*}
    \begin{tikzcd}
        F:
            &[-2em] \Perf(X)^{[0]} \arrow[r, "\pi^*"]
                & \mc{A}^{[0]} \arrow[r, "{(M,\phi) \otimes -}"]
                    &[2em] \mc{A}^{[\Delta[r]]} \arrow[r]
                        & (\mc{A}/\mc{A}_0)^{[\Delta[r]]},
    \end{tikzcd}
\end{equation*}
now ornamented with the dualities of each category.
Here, $(M,\phi)$ is the symmetric form of proposition \ref{proposition:symmetricformkoszulcomplexprojectivebundle}.
Note that $\pi^*$ is a dg form functor by \cite[section 9.3]{schlichting17}.
Furthermore, ${(M,\phi) \otimes -}$ is a dg form functor by proposition \ref{proposition:tensoringbysymmetricformisformfunctor}.
Finally, the quotient map 
\begin{equation*}
    \mc{A}^{[\Delta[r]]} \longrightarrow (\mc{A}/\mc{A}_0)^{[\Delta[r]]}    
\end{equation*}
is a dg form functor by construction.
Hence the quasi-equivalence of dg categories $F$ is a composition of dg form functors and therefore itself a dg form functor, as was to be shown.
Finally, twisting the duality in $\Perf(X)^{[0]}$ by the invertible complex $\det \mc{E}^{\vee} \otimes \mc{L}[-r]$ gives the desired quasi-equivalence
\begin{equation*}
    \Perf(X)^{[\det \mc{E}^{\vee} \otimes \mc{L}[-r]]} \simeq (\mc{A}/\mc{A}_0)^{[L]},
\end{equation*}
and the proof is done.
\end{proof}

Now there are no more obstacles to proving main theorem \ref{maintheorem:projectivebundleformulaforgwwithtwistofdifferentparity}, which describes the Grothendieck-Witt spectrum of projective bundles when the parities of $r$ and $m$ differ.

\begin{theorem}[Theorem \ref{maintheorem:projectivebundleformulaforgwwithtwistofdifferentparity}]
\label{theorem:projectivebundleformulaforgwwithtwistofdifferentparity}
The following statements hold.
\begin{enumerate}[label=(\roman*)]
    \item If $r$ is even and $m$ is odd, then there is a split homotopy fibration
    \begin{equation*}
        \begin{tikzcd}[column sep=small]
            \K(X)^{\oplus s-1} \arrow[r]
                & \GW^{[n]}(\P, \pi^*\mc{L}(m)) \arrow[r]
                    & \GW^{[n-r]}(X,\det \mc{E}^{\vee} \otimes \mc{L}).
        \end{tikzcd}
    \end{equation*}
    \item If $r$ is odd and $m$ is even, then there is a homotopy fibration
    \begin{equation*}
        \begin{tikzcd}[column sep=tiny]
            \GW^{[n]}(X, \mc{L}) \oplus \K(X)^{\oplus s-1} \arrow[r]
                &[-0.1em] \GW^{[n]}(\P, \pi^*\mc{L}(m)) \arrow[r]
                    &[-0.1em] \GW^{[n-r]}(X,\det \mc{E}^{\vee} \otimes \mc{L}),
        \end{tikzcd}
    \end{equation*}
    which splits if the element $\nu'$ of lemma \ref{lemma:middletermexterioralgebraissymmetric} vanishes in $\W^{[r+1]}_0(X, \det \mc{E})$.
    The condition for the splitting is satisfied e.g. if $\mc{E}$ is a trivial bundle.
\end{enumerate}
\end{theorem}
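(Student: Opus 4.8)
The plan is to assemble both statements from the localization fibration of Theorem~\ref{theorem:localizationforgw}, evaluated term by term using Proposition~\ref{proposition:dgformfunctorbasetoquotientofsemiorthogonaldecomposition} and the additivity Theorem~\ref{theorem:generaladditivityforgw}, and then to split it by means of the symmetric forms extracted from the Koszul complex in Section~\ref{subsection:constructingsymmetricformsfromkoszulcomplexes}.

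First I would set up the sequence. Since $\OO(m)$ may be traded for $\OO(m+2i)$ without changing Grothendieck--Witt theory, take $m=-r-1$; then $m$ has parity opposite to $r$, so this single choice treats (i) and (ii) simultaneously. With the semi-orthogonal decomposition $\langle\mc{A}(-r-1),\mc{A}(-r),\dots,\mc{A}(-1)\rangle$ of $\mc{A}=\Perf(\P)$ and the relation $\mc{A}(k)^{\natural}\subset\mc{A}(-r-1-k)$ of Proposition~\ref{proposition:dualitydecompositionprojectivebundle}, the subcategory $\mc{A}_0=\langle\mc{A}(-r),\dots,\mc{A}(-1)\rangle$ is stable under the duality, giving the quasi-exact sequence~(\ref{diagram:quasiexactsequenceforperf}) of dg categories with duality. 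Theorem~\ref{theorem:localizationforgw} then produces a homotopy fibration
\begin{equation*}
    \GW^{[n]}(\mc{A}_0^{[L]}) \longrightarrow \GW^{[n]}(\P,\pi^*\mc{L}(m)) \longrightarrow \GW^{[n]}\bigl((\mc{A}/\mc{A}_0)^{[L]}\bigr),
\end{equation*}
whose middle term is $\GW^{[n]}(\P,\pi^*\mc{L}(m))$ by definition. By Proposition~\ref{proposition:dgformfunctorbasetoquotientofsemiorthogonaldecomposition} the right-hand term is $\GW^{[n-r]}(X,\det\mc{E}^{\vee}\otimes\mc{L})$. For the left-hand term, the duality permutes the constituents $\mc{A}(-r),\dots,\mc{A}(-1)$ of $\mc{A}_0$ in pairs, fixing the single constituent $\mc{A}(-s)$ when $r$ is odd and none when $r$ is even; using $(\pi^*M\otimes\OO(-s))^{\natural}\cong\pi^*[M,\mc{L}]\otimes\OO(-s)$ one sees the fixed piece carries the duality $\vee_{\mc{L}}$, while each swapped pair contributes a copy of $\K(X)$ through the hyperbolic functor. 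Feeding this duality-preserving semi-orthogonal decomposition into the additivity Theorem~\ref{theorem:generaladditivityforgw} identifies $\GW^{[n]}(\mc{A}_0^{[L]})$ with $\K(X)^{\oplus s-1}$ for $r$ even and with $\GW^{[n]}(X,\mc{L})\oplus\K(X)^{\oplus s-1}$ for $r$ odd, which yields the two homotopy fibrations claimed.

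It remains to split the fibration by constructing a section of the quotient map on Grothendieck--Witt spectra. The equivalence $\Perf(X)^{[\det\mc{E}^{\vee}\otimes\mc{L}[-r]]}\simeq(\mc{A}/\mc{A}_0)^{[L]}$ of Proposition~\ref{proposition:dgformfunctorbasetoquotientofsemiorthogonaldecomposition} is built by tensoring with the Koszul symmetric form $(M,\phi)$ of Proposition~\ref{proposition:symmetricformkoszulcomplexprojectivebundle} and then projecting; the idea is to replace $(M,\phi)$ by a symmetric form whose underlying map is already a quasi-isomorphism in $\Perf(\P)^{[\Delta[r]]}$, so that the resulting dg form functor lands in $\Perf(\P)^{[L]}$ itself rather than only in the quotient. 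For $r$ even this is $(M,\phi)$ with $\ell=-s-2$, a quasi-isomorphism by Proposition~\ref{proposition:evenprojectivebundlecutskoszulintwo}; for $r$ odd with $\nu'=0$ it is the form $(H,\psi)$ of Theorem~\ref{theorem:middletermofkoszulcomplexwittnilcutskoszulintwo}. In either case tensoring with this form is a dg form functor by Proposition~\ref{proposition:tensoringbysymmetricformisformfunctor}; composing it with $\pi^*$ and the twist by $\det\mc{E}^{\vee}\otimes\mc{L}[-r]$ gives a dg form functor $G\colon\Perf(X)^{[\det\mc{E}^{\vee}\otimes\mc{L}[-r]]}\to\Perf(\P)^{[L]}$ carrying quasi-isomorphisms to quasi-isomorphisms, hence inducing a map $\GW^{[n-r]}(X,\det\mc{E}^{\vee}\otimes\mc{L})\to\GW^{[n]}(\P,\pi^*\mc{L}(m))$. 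The crux is that the composite of $G$ with the quotient functor $\Perf(\P)^{[L]}\to(\mc{A}/\mc{A}_0)^{[L]}$ agrees, up to a natural transformation of dg form functors, with the equivalence of Proposition~\ref{proposition:dgformfunctorbasetoquotientofsemiorthogonaldecomposition}: this is forced because every $\mc{A}_0$-constituent of $H$ (or of $M$) vanishes in $\mc{A}/\mc{A}_0$, leaving exactly the image of the Koszul complex, precisely as in Lemma~\ref{lemma:koszulsymmetricformisquasiisomorphisminquotient}. Granting this, $G$ induces a section and the fibration splits. For $r$ odd the construction of $(H,\psi)$ requires $\nu'=0$, which by Lemma~\ref{lemma:quotientbundleoddrankimplieswittnil} holds when $\mc{E}$ is trivial (or admits an odd-rank quotient); in general one is left only with the homotopy fibration.

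The step I expect to be the main obstacle is exactly this last verification: matching $G$ followed by the quotient functor with the equivalence of Proposition~\ref{proposition:dgformfunctorbasetoquotientofsemiorthogonaldecomposition} \emph{as a dg form functor}, i.e.\ lining up the duality compatibility morphisms rather than merely the underlying functors. For $r$ odd this is delicate because the surgical modifications built into $H$ --- the Lagrangian $\mc{P}$ and the metabolic correction $\mc{N}'$ glued onto $\Lambda^s\mc{E}$ --- must be shown to disappear compatibly with the symmetric structure upon passing to the quotient, and this is precisely where the hypothesis that $\nu'$ vanish in $\W^{[r+1]}_0(X,\det\mc{E})$ is used, since it is what makes $(H,\psi)$ a symmetric form whose underlying chain map is a quasi-isomorphism in $\Perf(\P)^{[\Delta[r]]}$.
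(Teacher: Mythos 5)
Your proposal follows the paper's own proof essentially step for step: the same quasi-exact sequence $\mc{A}_0^{[L]}\to\mc{A}^{[L]}\to(\mc{A}/\mc{A}_0)^{[L]}$ with $m=-r-1$, localization plus additivity for the fiber, Proposition \ref{proposition:dgformfunctorbasetoquotientofsemiorthogonaldecomposition} for the cofiber, and the splitting via cup product with the quasi-isomorphic symmetric forms of Proposition \ref{proposition:evenprojectivebundlecutskoszulintwo} and Theorem \ref{theorem:middletermofkoszulcomplexwittnilcutskoszulintwo}, checked against $\bar\phi$ in the quotient exactly as the paper does (including the key observation that $\mc{P}$, $\mc{N}$, $\mc{S}$ lie in $\mc{A}_0$). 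The only point worth flagging is in case (i): by your own counting the $r$ constituents of $\mc{A}_0$ pair up under the duality with none fixed, giving $r/2=s$ hyperbolic pairs and hence $\K(X)^{\oplus s}$ rather than $\K(X)^{\oplus s-1}$; this off-by-one sits in the statement being proved (and is left implicit in the paper's proof), not in your method.
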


\begin{proof}
Without loss of generality, one may assume $m = -r-1$ in both cases.
The quasi-exact sequence (\ref{diagram:quasiexactsequenceforperf}) gives rise to a homotopy fibration of Grothendieck-Witt spectra 
\begin{equation*}
    \GW^{[n]}(\mc{A}_0^{[L]}) \longrightarrow \GW^{[n]}(\mc{A}^{[L]}) \longrightarrow \GW^{[n]}((\mc{A}/\mc{A}_0)^{[L]})
\end{equation*}
by \cite[theorem 6.6]{schlichting17}.
Note that $\GW^{[n]}(\mc{A}^{[L]})$ is just a different way of writing $\GW^{[n]}(\P, \pi^*\mc{L}(m))$.
As already remarked, the additivity theorem \ref{theorem:generaladditivityforgw} gives a formula for the first term $\GW^{[n]}(\mc{A}_0^{[L]})$ of both (i) and (ii). 
Thus it suffices to show that there is a quasi-equivalence
\begin{equation*}
    F: \Perf(X)^{[\det \mc{E}^{\vee} \otimes \mc{L}[-r]]} \longrightarrow (\mc{A}/\mc{A}_0)^{[L]},
\end{equation*}
but this holds by proposition \ref{proposition:dgformfunctorbasetoquotientofsemiorthogonaldecomposition}.
This yields both claimed homotopy fibrations.

If $r$ is even or the element $\nu'$ of lemma \ref{lemma:middletermexterioralgebraissymmetric} vanishes in $\W^{[r+1]}_0(X, \det \mc{E})$, then it is possible to construct a symmetric form $\psi: H \rightarrow [H, \Delta[r]]$ which is a quasi-isomorphism by proposition \ref{proposition:evenprojectivebundlecutskoszulintwo} and theorem \ref{theorem:middletermofkoszulcomplexwittnilcutskoszulintwo}.
The composition
\begin{equation*}
    \begin{tikzcd}
        F':
            &[-2em] \Perf(X)^{[0]} \arrow[r, "\pi^*"]
                & \mc{A}^{[0]} \arrow[r, "{(H,\psi) \otimes -}"]
                    &[1em] \mc{A}^{[\Delta[r]]}
    \end{tikzcd}
\end{equation*}
induces a map $\psi \cup - : \GW^{[-r]}(X, \det \mc{E}^{\vee} \otimes \mc{L}) \ra \GW^{[0]}(\P, \pi^*\mc{L}(m))$ of Grothendieck-Witt spectra.
To show that cup product with $\psi$ splits the homotopy fibration, it suffices to show that the triangle of pretriangulated dg categories with duality
\begin{equation*}
    \begin{tikzcd}[column sep=small]
        { }
            & \mc{A}^{[\Delta[r]]} \arrow[dr]
                & { } \\
        \Perf(X)^{[0]} \arrow[rr, "{F}"] \arrow[ur, "F'"]
            & { }
                & (\mc{A}/\mc{A}_0)^{[\Delta[r]]}
    \end{tikzcd}
\end{equation*}
commutes up to natural isomorphism, which amounts to showing that the image of $\psi: H \ra [H, \Delta[r]]$ in $(\mc{A}/\mc{A}_0)^{[\Delta[r]]}$ is the symmetric form $\bar{\phi}$ of lemma \ref{lemma:koszulsymmetricformisquasiisomorphisminquotient}. 
If $r$ is even, this follows from proposition \ref{proposition:evenprojectivebundlecutskoszulintwo}.
If $r$ is odd, then the construction of $(H,\psi)$ of theorem \ref{theorem:middletermofkoszulcomplexwittnilcutskoszulintwo} shows that $\mc{N}$, $\mc{P}$ and $\mc{S}$ are all contained in $\mc{A}_0$.
It follows that the image of $\psi$ in $(\mc{A}/\mc{A}_0)^{[\Delta[r]]}$ is isomorphic to $\bar{\phi}$, which completes the proof.

Note that lemma \ref{lemma:quotientbundleoddrankimplieswittnil} provides a sufficient condition for $\nu$ to vanish, and that this condition holds in particular if $\mc{E}$ is a trivial bundle.
\end{proof}

\printbibliography

\end{document}